\documentclass[11pt]{amsart}
\usepackage{polski}
\usepackage[T1]{fontenc}
\usepackage{amsmath, amsthm, amssymb, amsfonts, enumerate}
\usepackage{enumitem}
\usepackage[colorlinks=true,linkcolor=blue,urlcolor=blue]{hyperref}
\usepackage{dsfont}  
\usepackage{color}
\usepackage{geometry}
\usepackage{todonotes}
\usepackage{enumerate}

\geometry{hmargin=3.0cm, vmargin=2.5cm}

\newtheorem{theorem}{Theorem}[section]
\newtheorem{remark}[theorem]{Remark}
\newtheorem{assumption}[theorem]{Assumption}
\newtheorem{lemma}[theorem]{Lemma}
\newtheorem{proposition}[theorem]{Proposition}
\newtheorem{corollary}[theorem]{Corollary}
\newtheorem{definition}[theorem]{Definition}
\newtheorem{example}[theorem]{Example}
\theoremstyle{plain}

\newcommand{\field}[1]{\mathbb{#1}}

\newcommand{\R}{\field{R}}
\newcommand{\bH}{\field{H}}

\newcommand{\E}{\field{E}}
\renewcommand{\P}{\field{P}}

\usepackage[utf8]{inputenc}
\usepackage[english]{babel}
\usepackage{indentfirst}
\usepackage{graphicx}
\usepackage{pdfpages}
\usepackage[title]{appendix}
\usepackage{fancyhdr}

\fancyhf{}
\fancyhead[LE,RO]{\bfseries\thepage}
\fancyhead[RE]{\bfseries\footnotesize\nouppercase{\leftmark}}
\fancyhead[LO]{\bfseries\footnotesize\nouppercase{\rightmark}}
\usepackage{enumitem}
\usepackage{amsmath}
\usepackage{amssymb} 
\usepackage{bm}
\usepackage{mathrsfs}
\usepackage{amsthm}
\usepackage{comment}
\usepackage{hyperref}
\usepackage{dsfont} 
 
\usepackage{mathtools}
\DeclareMathOperator*{\argmin}{arg\,min}

\DeclareMathOperator*{\esssup}{ess\,sup}
\DeclareMathOperator*{\essinf}{ess\,inf}

\title[Strong solutions to submodular MFGs with common noise]{Strong solutions to submodular mean field games with common noise and related McKean-Vlasov FBSDEs} 
\author[Dianetti]{Jodi Dianetti}
\keywords{}

\date{\today}

\numberwithin{equation}{section}
\begin{document}
\maketitle

\begin{abstract}
This paper studies multidimensional mean field games with common noise and the related system of McKean-Vlasov forward-backward stochastic differential equations deriving from the stochastic maximum principle.
We first propose some structural conditions which are related to the submodularity of the underlying mean field game and are a sort of opposite version of the well known Lasry-Lions monotonicity.   
By reformulating the representative player minimization problem via the stochastic maximum principle, the submodularity conditions allow to prove comparison principles for the forward-backward system, which correspond to the monotonicity of the best reply map.  
Building on this property, existence of strong solutions is shown via Tarski's fixed point theorem, both for the mean field game and for the related McKean-Vlasov forward-backward system.
In both cases, the set of solutions enjoys a lattice structure, with minimal and maximal solutions which can be constructed by iterating the best reply map or via the \emph{fictitious play} algorithm.        
\end{abstract}

\maketitle
\smallskip
{\textbf{Keywords}}: Mean field games with common noise; FBSDE; Mean field FBSDE with conditional law; stochastic maximum principle; submodular cost function; Tarski's fixed point theorem; fictitious play.

\smallskip 
{\textbf{AMS subject classification}}: 
93E20, 91A15, 
60H30, 60H10. 

\section{Introduction} 
Mean field games (MFGs, in short) have been proposed independently by \cite{HuangMalhameCaines06} and \cite{LasryLions07}, and arise as limit models for non-cooperative symmetric $N$-player games with interaction of mean field type as the number of players $N$ tends to infinity.
When the choice of the players is influenced by noises which are correlated, a common noise appears in the limiting MFG. 
While an extensive literature was developed around this topic (see, e.g., the two-volume book \cite{CarmonaDelarue18} and the references therein), a general understanding of the nature of the solutions to MFGs with common noise remains a central open problem in MFG theory. 
In particular, the existence of \emph{strong solutions} (i.e., solutions which are adapted to the common noise) is known mainly under conditions which imply their uniqueness (see \cite{ahuja2016wellposedness, ahuja.ren.yang.2019, CarmonaDelarueLacker16, DelarueFT19, huang.tang.2021, tchuendom}).

Crucially, despite mathematically very desirable, uniqueness in game theory represents an exceptional situation and multiple equilibria may arise. 
Hence, the main motivation of this paper is to (partially) overcome this limitation. 
This is done by proposing structural conditions and developing a theory which allows to study the existence and the approximation of strong solutions to MFGs with common noise, under lack of uniqueness. 

 
\subsection*{The results} 
Consider independent Brownian motions $W$ and $B$, which are independent from an initial condition $\xi$. 
For any $B$-adapted stochastic flow of probability measures $\mu$, a representative player minimizes, by choosing an admissible control $\alpha$, the cost functional
\begin{align}  
    \label{eq intro MFG problem}
& J(\alpha,\mu)= \mathbb{E} \bigg[ \int_0^T h(t,X_t, \mu_t, \alpha_t) dt + g(X_T, \mu_T) \bigg],  \\ \notag
& \text{subject to } dX_t=b(t,X_t, \mu_t, \alpha_t)dt + \sigma(t,X_t) dW_t + \sigma^{\circ} (t,X_t) dB_t,   \quad X_0=\xi. \notag
\end{align}
A strong MFG equilibrium (or solution) is a process $\mu$ which coincides with the flow of conditional probabilities $(\mathbb P _{\text{\tiny{$X_t^\mu|B$}}})_t$ with respect to the common noise $B$ of a trajectory $X^\mu$, with $X^\mu$ being optimal when optimizing against $\mu$ itself. 
By enforcing some additional conditions, one can reformulate the representative player minimization problem (parametrized by $\mu$) via the stochastic maximum principle (SMP, in short).   
Doing so, any optimal trajectory $X^\mu$ for $\mu$ corresponds to the forward component $X$ of a solution $(X,Y,Z,Z^\circ)$ of the fully coupled  forward-backward stochastic differential equation (FBSDE, in short)
\begin{align}
    \label{eq intro FBSDE optimal controls}
    & dX_t = b(t,X_t,\mu_t, \hat \alpha (t,X_t,\mu_t,Y_t))dt + \sigma (t,X_t)dW_t + \sigma^\circ (t,X_t)dB_t, \quad X_0=\xi, \\ \notag
    & dY_t = -D_x H(t,X_t,\mu_t, Y_t,\hat \alpha (t,X_t,\mu_t,Y_t)) dt + Z_t dW_t + Z_t^\circ dB_t, \quad  Y_T = D_x g(X_T, \mu_T), \notag 
\end{align} 
where $\hat \alpha (t,x,\nu,y) := \argmin_{a\in A} H(t,x,\nu,y,a) $ is the (unique, under suitable assumptions) minimizer of the Hamiltonian $H(t,x,\nu,y,a) := b(t,x,\nu,a)y + h(t,x,\nu,a)$.  
In this spirit, one wishes to relate any strong MFG equilibrium $\mu$ to a solution $(X,Y,Z,Z^\circ)$ 
to the conditional McKean-Vlasov (MKV, in short) FBSDE
\begin{align*}
   & dX_t = b(t,X_t,\mathbb P _{\text{\tiny{$X_t|B$}}}, \hat \alpha (t,X_t,\mathbb P _{\text{\tiny{$X_t|B$}}},Y_t))dt + \sigma (t,X_t)dW_t + \sigma^\circ (t,X_t)dB_t, \  X_0=\xi,  \\ \notag
    & dY_t = -D_x H(t,X_t,\mathbb P _{\text{\tiny{$X_t|B$}}}, Y_t,\hat \alpha (t,X_t,\mathbb P _{\text{\tiny{$X_t|B$}}},Y_t)) dt + Z_t dW_t + Z_t^\circ dB_t, \  Y_T = D_x g(X_T, \mathbb P _{\text{\tiny{$X_T|B$}}}). \notag 
\end{align*} 

This paper addresses the problems above by assuming some structural conditions which are related to the submodularity of the underlying mean field game. 
It is crucial to underline that (see \cite{dianetti.ferrari.fischer.nendel.2019} or Remark \ref{remark Lasry-Lions monotonicity condition} below) the submodularity property represents in MFGs a sort of antithetic version of the well known Lasry-Lions monotonicity condition, which is typically related to the uniqueness of the equilibria (see \cite{CarmonaDelarue18}). 
Indeed, the submodularity conditions appears (implicitly) in a number of linear-quadratic models (see, e.g., \cite{Bensoussan-etal-2016}) and in many works discussing multiple equilibria in MFGs (see, e.g., \cite{BardiFischer18, Cecchin&DaiPra&Fischer&Pelino19, DelarueFT19}), although this property is not exploited therein.  
 
One of the starting points of this work was to investigate whether the submodularity of the game could be related to a comparison principle, in the parameter $\mu$, of the related family of multidimensional FBSDEs \eqref{eq intro FBSDE optimal controls}.
When the state process is 1-dimensional, this idea can be easily motivated in the following example.   
Take $h(t,x,\nu,a) = f(t,x,\nu) + l(t,x,a)$ such that, for $\phi \in \{ f(t,\cdot,\cdot), g \}$, 
$\phi$ is  $C^1$ in $x$ and $\phi_x (x,\bar \nu) \leq   \phi_x (x, \nu)$, for $\nu \leq \bar \nu$ (in a suitable sense).
If $b$ and $\hat \alpha$ do not depend on the measure $\nu$, then it follows that
$$ 
 H_x (t,x,\bar \nu, y, \hat \alpha (t,x,y) ) \leq  H_x (t,x, \nu, y, \hat \alpha (t,x,y) ) \quad \text{and} \quad g_x ( x, \bar \nu) \leq g_x ( x, \nu), \quad  \text{for $\nu \leq \bar \nu$.}
$$ 
The latter inequality suggests to investigate a suitable comparison principle for the FBSDE \eqref{eq intro FBSDE optimal controls} which allows to conclude that $X^\mu \leq  X^{\bar \mu}$ when $\mu \leq \bar \mu$. 
Such a comparison principle is clearly related to the monotonicity of the \emph{best reply map} $\mu \mapsto (\mathbb P _{\text{\tiny{$X_t^\mu|B$}}})_t$. 

In the sequel, two classes of structural conditions are proposed, for either separable or nonseparable Hamiltonians.  
For separable Hamiltonians, the monotonicity of the best reply map is proved, from which a new comparison principle for  multidimensional FBSDEs follows.
When the Hamiltonians are nonseparable, we instead employ the comparison principle in \cite{chen.luo2021} in order to show that the best reply map is increasing.

Building on the monotonicity of the best reply map, the main results of this paper are the following: 
\begin{enumerate}
    \item Existence of strong solutions to the MFG problem and to the related MKV FBSDE is shown via Tarski's fixed point theorem (see \cite{T}). Moreover, both sets of solutions enjoy a lattice structure, and there exist minimal and maximal solutions. 
    \item The algorithm which consists in iterating the best reply map (equivalently, in solving iteratively the FBSDE \eqref{eq intro FBSDE optimal controls}) converges either to the minimal or to the maximal solution, when suitably initialized.
    \item The \emph{fictitious play algorithm} (which consists in solving a sequence of control problems, at each iteration solving the representative player minimization problem parametrized by an arithmetic average of the optimal distributions computed in the previous steps) converges either to the minimal or to the maximal solution, when suitably initialized. 
\end{enumerate}

The approach presented in this paper focuses on the reformulation of the representative player minimization problem via the SMP, and one might investigate similar results in terms of the --so called-- master equation or of the MFG system (see, e.g., \cite{CarmonaDelarue18}). 
These questions are left for future research. 
 
\subsection*{Related literature}  
We now discuss the relations between the results described above and the ones known in the literature.  

\subsubsection*{Strong solutions to MFGs with common noise} 
A probabilistic formulation of MFGs with common noise was presented in \cite{CarmonaDelarueLacker16}, together with several notions of equilibrium. 
There, the definition of a weak solution was introduced, where (in line with the theory of stochastic differential equations) the equilibrium measure flow is not necessarily adapted  to the filtration generated by the common noise, but only conditionally independent to it.  
In \cite{CarmonaDelarueLacker16}, the existence of weak solutions is established in a very general setting, while the existence of a strong solution is deduced (in a similar spirit of the Yamada-Watanabe theory) under the Lasry-Lions monotonicity condition, which automatically implies the uniqueness of the equilibrium.  

Along with \cite{CarmonaDelarueLacker16}, several methods and structural conditions have been proposed in order to study the existence of (strong) solutions.  
The MKV FBSDE deriving from the SMP was first used in \cite{carmona.delarue.2013} in order to solve linear-convex MFGs with no common noise.  
Later, strong solutions to MFGs with common noise have been studied via the MKV FBSDE in \cite{ahuja2016wellposedness}, for a model with terminal cost satisfying convexity and weak monotonicity properties. 
A similar approach was used in \cite{ahuja.ren.yang.2019, huang.tang.2021} for more general models, while linear-quadratic models were solved in \cite{DelarueFT19, tchuendom}.
Furthermore, one can study MFGs by using MKV FBSDEs in which the backward component represents the minimal cost of the optimization problem at equilibrium. 
This method was first used in \cite{carmona.lacker.2015} for MFGs with no common noise, and later employed in \cite{burzoni.campi.2021} in order to show the existence of a weak solution to MFGs with absorbing boundary conditions and common noise. 
MFGs with common noise are also studied via partial differential equations (PDEs, in short) (see, e.g., the recent \cite{cardaliaguet.souganidis.2022}) and through the  master equation (see, e.g., \cite{bertucci2021monotone, cardaliaguet.delarue.lasry.lions.2019, CarmonaDelarue18, gangbo.meszaros.mou.zhang.2022, mou.zhang.2022master.antimon}). 
In particular, the recent \cite{mou.zhang.2022master.antimon} provides a well posedness theory for the master equation related MFGs with common noise by introducing some \emph{anti-monotonicity conditions}. 
These conditions seem to be related to the sobmodularity conditions proposed in this paper, though none of the two sets of assumptions includes the other.  
Existence of strong solutions is also discussed in some examples in \cite{dianetti.ferrari.fischer.nendel.2019, dianetti.ferrari.fischer.nendel.2022unifying} for submodular MFGs.   

In relation to the previous works, this paper presents a theory establishing the existence of strong solutions without relying on uniqueness.  
Moreover, coefficients which may be discontinuous in the measure as well as degenerate settings are allowed:
In particular, first order MFGs with (see, e.g., \cite{cardaliaguet.souganidis.2022}) and without common noise can be treated. 

\subsubsection*{Approximation of strong solutions}
The problem of providing algorithms able to approximate MFG solutions was first addressed in \cite{CardaliaguetHadikhanloo17}, where the authors studied, through PDE methods, the convergence of the fictitious play in potential MFGs with no common noise.
This algorithm was further studied for MFGs of stopping and with absorbing boundary conditions in \cite{dumitrescu.leutscher.tankiv.2022linear} and, with the help of machine learning techniques, in \cite{elie2019approximate, perrin2020fictitious, xie2020provable}. 
Another learning procedure able to approximate MFG equilibria is the one studied, for submodular MFGs, in \cite{dianetti.ferrari.fischer.nendel.2019, dianetti.ferrari.fischer.nendel.2022unifying}. 
This algorithm consists in iterating the best reply map, and it seems to be quite promising when combined with reinforcement learning methods, as shown in the recent \cite{guo.hu.xu.zhamg.2019learning, lee.Rengarajan.Kalathil.Shakkottai2021}. 

In this work, both the learning procedure which consists in iterating the best reply map and the fictitious play are further developed for MFGs with common noise and for conditional MKV FBSDEs, and their convergence is studied by using stability results for BSDEs.
It is worth underlining that, in contrast with the papers mentioned above (but \cite{lee.Rengarajan.Kalathil.Shakkottai2021}), these convergence results hold under lack of uniqueness of the equilibria.
 
\subsubsection*{FBSDEs, MKV FBSDEs and comparison principles for FBSDEs} 
Solvability of coupled FBSDEs was first discussed in \cite{antonelli.93}
and soon has become a relevant topic in (applied) probability theory (see \cite{delarue.02,hu.peng.95,ma.protter.yong.94,pardoux.tang.99} and the monograph \cite{ma.yong.99}, among many others). 
An important use of FBSDEs in stochastic control theory comes from the SMP (see \cite{peng.90} or the monograph \cite{CarmonaDelarue18}).
This approach allows to characterize (under suitable linear-convex assumptions) the optimal control in terms of the minimizer of the Hamiltonian of the control problem and of the solution of the system.
Building on the SMP, MKV FBSDEs were introduced in \cite{carmona.dealarue.2013meanfieldFBSDEs, carmona.delarue.2013, carmona.delarue.2015forward} in  order to study control problems involving also the distribution of the controlled dynamics.
 
While comparison principles are classical for stochastic differential equations (see, e.g., \cite{Protter05}) and for backward stochastic differential equations (see, e.g., \cite{zhang2017} and the references therein), they are known to fail for FBSDEs (see, e.g., \cite{cvitanic.ma.96} or Example 6.2 at p.\ 22 in \cite{ma.yong.99}).  
When the backward component is one dimensional, two types of comparison principles are known.   
One is the comparison principle proved in \cite{cvitanic.ma.96,ma.wu.zhang.15}, which allows to compare the \emph{decoupling fields} of the systems (a function which allows to express the backward component in terms of the forward one).
Such results typically hinge on comparison principles for a suitable parabolic partial differential equation, which is solved by the decoupling field.
The second  type of comparison principles allows to compare both the forward and the backward component (see \cite{wu.xu.09}).
In multidimensional cases, comparison principles are even more difficult to obtain, and strong monotonicity assumptions are required.
To the best of the author's knowledge, such results were obtained, in multidimensional settings, only in \cite{chen.luo2021,wu.xu.09} for the forward and the backward components. 
 
In this paper, a comparison principle is obtained for multidimensional parameter-dependent-FBSDEs corresponding (via the SMP) to minimization problems which are submodular in the parameter (for further details, see Corollary \ref{corollary comparison principle} and Remark \ref{remark comparison principle} below). 
Moreover, existence of minimal and maximal strong solutions to the conditional MKV FBSDE is established by lattice theoretical methods.
This approach does not require the time horizon to be small and the Lipschitz continuity in the measure flow (see \cite{CarmonaDelarue18}), or to enforce the --so called-- monotonicity conditions (see \cite{ahuja2016wellposedness,ahuja.ren.yang.2019, huang.tang.2021}).  
  
\subsubsection*{Submodular games and MFGs}  
In game theory, the submodularity property was first introduced by Topkis \cite{To} for static $N$-player games and describes games in which players minimize cost functions which are submodular in the vector of strategies chosen by all the players. 
Such a class of games plays an important role in economical applications (see the monograph \cite{Vives01} and the references in it).  
In MFGs, the submodularity has already been used in \cite{Adlakhaetal} for a class of stationary discrete time games, in \cite{Wiecek17} for a class of finite state MFGs with exit, in \cite{CarmonaDelarueLacker16} for optimal timing MFGs and in \cite{dianetti.ferrari.fischer.nendel.2019} for MFGs involving  one-dimensional It\^o-diffusions.     
As shown in the recent \cite{dianetti.ferrari.fischer.nendel.2022unifying}, the submodularity property allows to treat qualitatively different formulations of MFGs (such as MFGs with singular controls, optimal stopping, reflecting boundary conditions and finite-state problems).  
  
In terms of methodology, the closest contribution to the present work is \cite{dianetti.ferrari.fischer.nendel.2019} and a detailed comparison is, at this point, necessary.  
First of all, this paper provides a theory for MFGs with common noise and interprets the submodularity property from the FBSDE point of view, while \cite{dianetti.ferrari.fischer.nendel.2019} does not. 
In particular, extra technical care is needed in order to deal with stochastic flows of measures and the approximation result is proved using stability of backward stochastic differential equations.  
Furthermore, this paper also establishes the convergence of the fictitious play, which was not studied in \cite{dianetti.ferrari.fischer.nendel.2019}. 
Secondly, 
while the state space in \cite{dianetti.ferrari.fischer.nendel.2019} must be one-dimensional, here multidimensional settings are considered. 
This important issue is overcome by formulating the fixed point problem on a lattice of state processes (see Remark \ref{remark multidimensional} below).
Moreover, in the present paper, structural conditions are presented for which the drift of the state process can depend on the measure and in which the Hamiltonian is nonseparable, while the setting in \cite{dianetti.ferrari.fischer.nendel.2019} excludes this possibility (apart very specific examples). 
Finally, differently to \cite{dianetti.ferrari.fischer.nendel.2019}, this paper also allows to treat problems where the volatility of the state process is state dependent.

\subsection*{Outline of the paper} The rest of the paper is organized as follows. 
Section \ref{section Problem formulation and main results} presents the FBSDE-approach to submodular MFGs, by introducing the submodularity conditions and by stating both some intermediate results and the main results. 
The intermediate results are proved in Section \ref{section proof auxiliary results}, while the main results are proved in Section \ref{section proof main results}.
Examples are discussed in Section \ref{section examples}, while an auxiliary lemma is proved in Appendix \ref{appendix}.

\subsection*{General notation} 
Set $\mathbb N _0 :=\mathbb N \setminus \{ 0 \}$. 
For $d \in \mathbb{N}_0$ and $x,y \in \R^d$, denote by  $xy$  the scalar product in $\R^d$, as well as by $|\cdot|$ the Euclidean norm in $\R^d$.  
For $x \in \mathbb{R}^d$,   $x^{\text{\tiny {$\top$}}}$ indicates the transpose of $x$. 
For $x,y \in \R^d$ and $c \in \R$,  set  $x \leq y$ if $x^i \leq y^i$ for each $i=1,...,d$, as well as $x \leq c$ if $x^i \leq c $ for each $i=1,...,d$. 
Moreover, define $x \land y := (x^1\land y^1,...,x^d\land y^d)$ and $x \lor y := (x^1 \lor y^1,...,x^d \lor y^d)$, where  $x^i \land y^i :=\min\{ x^i , y^i \}$ and $x^i \lor y^i :=\max\{ x^i , y^i \}$ for each $i=1,...,d$.
For a separable metric space $(E, \ell)$, $\mathcal P (E)$ denotes the set of probability measures on the Borel sets of $E$.
For $p >0 $,   $\mathcal P_p (E ) $ is the set of $\nu \in \mathcal P (E)$ such that $\int _{E} \ell^p(y,y_0) d \nu (y) < \infty$ for some (and thus for all)  $y_0 \in E$. 
The $p$-Wassertein distance on $\mathcal P _p  (E)$ is defined as
\begin{equation}
    \label{eq Wasserstain distance}
\ell_{E,p} (\bar \nu, \nu ) := \inf \bigg\{ \bigg( \int_{E \times E} \ell^p(x,y) d N (x,y) \bigg) ^{1/p} \, \Big| \,  N \in \Pi(\bar \nu, \nu) \bigg\},
\quad \text{for $\bar \nu, \nu \in \mathcal P _p (E)$,}
\end{equation} 
where  $\Pi(\bar \nu, \nu)$ denotes the set of probability measures on $E \times E$ with marginals $\bar \nu$ and  $\nu$. 
For $d \in \mathbb{N}_0$, $p>0$ and $\mu \in \mathcal P (\R^d)$, set $\| \mu \|_p := \int_{\R^d} |y|^p d\mu(y)$. 
Unless otherwise stated, $C$ indicates a generic positive constant, which may change from line to line.

\section{Problem formulation and main results}
\label{section Problem formulation and main results}
\subsection{MFG problem and related MKV FBSDE} 
Take $d, d_1, d_2, k \in \mathbb N _0$, a $d_1$-dimensional Brownian motion $W=(W^{1}, ...,W^{d_1})$ and a $d_2$-dimensional Brownian motion $B=(B^{1}, ...,B^{d_2})$ on a complete probability space $(\Omega,\mathcal F,\mathbb P)$. 
Assume the processes $W$ and $B$ to be independent, and independent from a $d$-dimensional square integrable random variable $\xi=(\xi^1,...,\xi^d)$.
Denote by $\mathbb F := (\mathcal F _t) _{t \in [0,T]}$ (resp.\ $\mathbb F^{\text{\tiny{$B$}}}:= (\mathcal F _t^{\text{\tiny{$B$}}}) _{t \in [0,T]}$) the right continuous extension of the filtration generated by $(W,B,\xi)$ (resp.\ $(B,\xi)$), augmented by $\mathbb P $-null sets.

Consider a set $A \subset \R^k$, $A=A^1\times...\times A^k$ for closed intervals $A^i\subset \mathbb{R}$,  $i=1,...,k$. 
The set of admissible (open loop) controls $\mathcal A$ is defined as
$$
\mathcal A := \{ \alpha : \Omega \times [0,T] \to A | \text{ $\alpha=(\alpha^1, ..., \alpha ^k) $ is square integrable $\mathbb F$-progressively measurable} \}.
$$
On the set $\mathcal P _2 (\R^d)$, consider the $2$-Wasserstein distance  $\ell_{\R^d,2}$. 
The coefficients of the problem are given through (deterministic) measurable functions 
\begin{align*}
(b, h) :&\, [0,T] \times  \mathbb{R}^d \times \mathcal{P}_2(\R ^d) \times A \rightarrow \mathbb{R}^d \times \R, \quad b:=(b^1,...,b^d), \\
(\sigma, \sigma^\circ) :&\, [0,T] \times  \mathbb{R}^d \rightarrow \mathbb{R}^{d \times d_1} \times \mathbb{R}^{d \times d_2},  \quad \sigma:=(\sigma _j ^i)_{\text{\tiny{$j=1,...,d_1$}}}^{\text{\tiny{$i=1,...,d$}}},\ \sigma^\circ:=(\sigma_j^{i,\circ})_{\text{\tiny{$j=1,...,d_2$}}}^{\text{\tiny{$i=1,...,d$}}},\\  
g :&\,  \mathbb{R}^d \times \mathcal{P}_2(\R ^d)\rightarrow \R,
\end{align*}
which will be subject to the assumptions below (see Assumptions \ref{assumption existence}, \ref{assumption uniqueness FBSDEs}, \ref{assumption for J}, \ref{assumption for comparison} and \ref{assumption a priori estimates}).

Introduce the finite measure $\pi:= \delta_0+dt+\delta_T$ on the Borel $\sigma$-algebra  of the interval $[0,T]$, where $\delta_0$ and $\delta_T$ denote the Dirac measures at time $0$ and $T$, respectively.   
The set of possible (stochastic) distributions of players is denoted by
\begin{align*}  
\mathcal M _{\text{\tiny{$B$}}}^2 := \Big\{ \mu: \Omega \times [0,T] \to & \mathcal P _2 (\R^d) \big|  \text{ $\mu$ is $\mathbb F ^{\text{\tiny{$B$}}}$-progr.\ meas.\ and }\E \big[ \begin{matrix} \int_0^T \| \mu_t \|_2^2 d\pi(t) \end{matrix} \big] < \infty   \Big\},
\end{align*}
while any controlled state processes will be an element of the set
$$
\mathbb H ^2 := \Big\{ M: \Omega \times [0,T] \to \R^d \big|\, \text{$M$ is  $\mathbb F $-progr.\ meas.\ and } \E \big[ \begin{matrix} \int_0^T | M_t |^2 d\pi(t) \end{matrix} \big] < \infty  \Big\}.
$$
The sets  $\mathcal M _{\text{\tiny{$B$}}}^2$ and $\mathbb H ^2$ are actually sets of equivalence classes of processes; that is, elements of $\mathcal M _{\text{\tiny{$B$}}}^2$ (and of $\mathbb H ^2$) are identified if they coincide $\mathbb P \otimes \pi$-a.e.\ in $\Omega \times [0,T]$. 
Also, for technical reasons, the processes in  $\mathcal M _{\text{\tiny{$B$}}}^2$ and in $\mathbb H ^2$ are not assumed to be continuous. 
For an $\mathbb F$-progressively measurable process $M$ (taking values in any Polish space), similarly to Remark 1 in \cite{tchuendom} one can show that the conditional law  $\mathbb P [ M_t \in \cdot \, |\mathcal F _t^{\text{\tiny{$B$}}}]$ of $M_t$ given the filtration $\mathcal F _t^{\text{\tiny{$B$}}}$ is such that $\mathbb P [ M_t \in \cdot \, |\mathcal F _t^{\text{\tiny{$B$}}} ] = \mathbb P [ M_t \in \cdot \, |\mathcal F _T^{\text{\tiny{$B$}}}] $ $\mathbb P$-a.s., for any $t \in[0,T]$.    
Therefore, in the sequel, $\mathbb P [ M_t \in \cdot \, |\mathcal F _t^{\text{\tiny{$B$}}}]$ will be simply denoted by $\mathbb P _{\text{\tiny{$M_t|B$}}}$.

\subsubsection{The MFG problem} 
Consider a mean field game (MFG, in short) in which, for any $\mu \in \mathcal M _{\text{\tiny{$B$}}}^2$, the representative player minimizes the cost functional
\begin{equation*}  
 J(\alpha,\mu):= \mathbb{E} \bigg[ \int_0^T h(t,X_t, \mu_t, \alpha_t) dt + g(X_T, \mu_T) \bigg], \quad \alpha \in \mathcal A,
\end{equation*}
where the process $X=(X^{1},...,X^d)$ denotes the solution to the controlled stochastic differential equation (SDE$_{(\alpha,\mu)}$, in short)
\begin{equation}\label{eq controlled SDE} 
dX_t=b(t,X_t, \mu_t, \alpha_t)dt + \sigma(t,X_t) dW_t + \sigma^{\circ} (t,X_t) dB_t,  \quad t \in [0,T], \quad X_0=\xi.
\end{equation}
For the moment, just assume that, for any $\alpha \in \mathcal A$ and $\mu \in \mathcal M _{\text{\tiny{$B$}}}^2$, the SDE$_{(\alpha, \mu)}$ admits a unique strong solution and the integrals in the cost functional $J(\alpha, \mu)$ are well defined, possibly equal to $+\infty$.
For $\mu \in \mathcal M _{\text{\tiny{$B$}}}^2$, a control $\alpha ^\mu \in \mathcal A$ is said to be optimal for $\mu$ if $J(\alpha^\mu, \mu) \leq J(\alpha, \mu)$ for any other $\alpha \in \mathcal A$. 
When such a control exists, one refers to the solution $X^\mu$ to the SDE$_{(\alpha^\mu, \mu)}$ \eqref{eq controlled SDE} as to optimal trajectory, and to the couple $(X^\mu,\alpha^\mu)$ as to optimal pair.

The focus of this paper is on the following notion of equilibrium.
\begin{definition}\label{def equilibrium}
A (strong) MFG equilibrium is a process $\mu\in \mathcal M _{\text{\tiny{$B$}}}^2$ such that 
$$
\mu_t = \mathbb P _{\text{\tiny{$X_t^\mu|B$}}}, \quad \text{ for any $t\in [0,T]$,} 
$$
for some optimal trajectory $X^\mu$.
\end{definition}
Since $\mathbb P _{\text{\tiny{$X_t^\mu|B$}}} =\mathbb P [ X_t^\mu \in \cdot \, |\mathcal F _t^{\text{\tiny{$B$}}}]$, any equilibrium $\mu$ as in Definition \ref{def equilibrium} is actually strong (see \cite{CarmonaDelarueLacker16}), in the sense that it is adapted to the filtration generated by the common noise $B$.

\subsubsection{The related FBSDEs}
We next introduce the forward-backward stochastic differential equation (FBSDE, in short) related to the MFG problem with common noise.
Define the Hamiltonians of the minimization problems by
$$
H(t,x,\mu,y,a) = b(t,x,\mu,a)y + h(t,x,\mu,a), \quad (t,x,\mu,y,a) \in [0,T] \times \R^d \times \mathcal P _2 (\R^d) \times \R^d \times A,
$$
and assume for the moment that the function $a \mapsto H(t,x,\mu,y,a)$ admits a unique minimizer $\hat \alpha (t,x,\mu,y)$ (see Assumption \ref{assumption existence} and Remark \ref{remark Lipschitz feedback} below). 
Namely, set
\begin{equation*}\label{eq optimal feedback} 
\hat \alpha (t,x,\mu,y) := \argmin_{a\in A} H(t,x,\mu,y,a) \in A, \quad (t,x,\mu,y) \in [0,T] \times \R^d \times \mathcal P _2 (\R^d) \times \R^d.
\end{equation*}  
For any $\mu \in  \mathcal M _{\text{\tiny{$B$}}}^2$, following the stochastic maximum principle approach, one can characterize the optimal pair for $\mu$ in terms of the solution  $(X,Y,Z, Z^\circ)$ to the  FBSDE$_\mu$
\begin{align}\label{eq FBSDE optimal controls}
   & dX_t = b(t,X_t,\mu_t, \hat \alpha (t,X_t,\mu_t,Y_t))dt + \sigma (t,X_t)dW_t + \sigma^\circ (t,X_t)dB_t,  \\ \notag
    & dY_t = -D_x H(t,X_t,\mu_t, Y_t,\hat \alpha (t,X_t,\mu_t,Y_t)) dt + Z_t dW_t + Z_t^\circ dB_t,  \notag 
\end{align}
with boundary conditions $ X_0=\xi$ and $ Y_T = D_x g(X_T, \mu_T)$.
Recall that such a solution is defined as a $\mathbb F$-progressively measurable process $(X,Y,Z, Z^\circ): \Omega \times [0,T] \to \R^d \times \R^d \times \R^{d \times d_1} \times \R^{d \times d_2}$ 
such that
\begin{equation}\label{eq square integrability sol FBSDE}
 \E \bigg[ \sup_{t\in [0,T]}( | X _t |^2 + |  Y _t|^2 ) 
    + \int_0^T ( |  Z _t |^2+ |  Z _t^\circ |^2 )dt \bigg] < \infty,
\end{equation} 
and such that system \eqref{eq FBSDE optimal controls} and the boundary conditions hold.  
Also, for future reference, it is convenient to introduce the notation
\begin{align}
    \label{eq definition data FBSDE}
    \hat b (t,x,\mu,y) & := b(t,x,\mu, \hat \alpha (t,x,\mu,y)), \\ \notag
    \hat h (t,x,\mu,y) &:= D_x H (t,x,\mu,y, \hat \alpha (t,x,\mu,y)), \\ \notag
    \hat g (x,\mu) &: =  D_x g(x, \mu), \notag
\end{align}
which allows to rewrite the system \eqref{eq FBSDE optimal controls} in the more compact form
\begin{align*}
   & dX_t = \hat b(t,X_t, \mu_t ,Y_t)dt + \sigma (t,X_t)dW_t + \sigma^\circ (t,X_t)dB_t,  \quad X_0=\xi, \\ \notag
    & dY_t = - \hat h (t,X_t,\mu_t , Y_t) dt + Z_t dW_t + Z_t^\circ dB_t, \quad  Y_T = \hat g(X_T,\mu_T ).  
\end{align*} 
In this spirit,  
one wishes to relate any (strong) equilibrium $\mu$ of the MFG problem with a (strong) solution $(X,Y,Z, Z^\circ)$ of the following the McKean-Vlasov FBSDE (MKV FBSDE, in short) 
\begin{align}\label{eq MKV FBSDE}
   & dX_t = \hat b(t,X_t, \mathbb P _{\text{\tiny{$X_t|B$}}} ,Y_t)dt + \sigma (t,X_t)dW_t + \sigma^\circ (t,X_t)dB_t,  \quad X_0=\xi, \\ \notag
    & dY_t = - \hat h (t,X_t,\mathbb P _{\text{\tiny{$X_t|B$}}} , Y_t) dt + Z_t dW_t + Z_t^\circ dB_t, \quad  Y_T = \hat g(X_T,\mathbb P _{\text{\tiny{$X_T|B$}}} ).  
\end{align}
A solution of the MKV FBSDE is a solution $(X,Y,Z, Z^\circ)$ of the FBSDE$_\mu$ \eqref{eq FBSDE optimal controls} with $\mu = (\mathbb P _{\text{\tiny{$X_t|B$}}})_{t \in [0,T]}$. 

\subsection{Optimal controls, FBSDEs and best reply maps}  
In order to reformulate the representative player control problem in terms of the the FBSDE$_\mu$, it is natural to work under some regularity conditions.  
\begin{assumption}\label{assumption existence} 
For constants $K, \kappa, \lambda>0$ and fixed $(\nu_0, a_0) \in \mathcal P _2 (\R^d) \times A$, assume that:
\begin{enumerate} 
\item $b(t,x,\mu,a)=b_1(t,x,\mu) + b_2 (t) a$ with $b_1$ continuous, $h(t,x,\mu,\cdot)$ is convex in $a$, $h$ is lower semi-continuous in $(x,\mu,a)$ and $g$ is lower semi-continuous in $(x,\mu)$.
\item\label{assumption existence: Lipschitzianity data SDE} The functions $b_1, \sigma, \sigma ^\circ $ are $C^1$ w.r.t.\ $x$, and
\begin{align*} 
    & |b_2(t)| + |b_1(t,0,\nu_0 )| +| \sigma(t,0)|  +|  \sigma^\circ (t,0)|  \leq K ,\\
    & |D_x b_1 (t,x,\mu)| +| D_x \sigma(t,x)|  +| D_x \sigma^\circ (t,x)|  \leq K, 
\end{align*}   
for any $(t,x,\mu) \in [0,T] \times \R ^d \times \mathcal P _2(\R^d)$.
\item\label{assumption existence: growth conditions on costs} The function $h(t,\cdot,\mu,\cdot)$ is $C^1$ in $(x,a)$, the function $g(\cdot, \mu)$ is $C^1$ in $x$ and  
 \begin{align*}
  & \kappa (|a|^2 -1) \leq h(t,x,\mu,a) \leq K (1+|x|^2 + \| \mu \|_2^2 + |a|^2 ), \\
   & -\kappa \leq g (x,\mu) \leq K (1+|x|^2 + \| \mu \|_2^2 + |a|^2 ), \\
    & |D_x h(t,x,\mu,a)| + |D_a h(t,x,\mu,a)| + |D_x g(x,\mu)| \leq K (1+|x| + \| \mu \|_1 + |a| ), 
\end{align*}   
for any $(t,x,\mu, a) \in [0,T] \times \R ^d \times \mathcal P _2(\R^d) \times A$.  
 Moreover, the functions $D_x h (t,\cdot, \mu, \cdot)$, $D_a h (t,\cdot, \mu, \cdot)$ and $ D_x g(\cdot, \mu)$ are Lipschitz continuous in $(x,a)$, uniformly in $(t,\mu)$. 
\item  The function  $h$ is $\lambda$-convex in $a$; i.e.,
\begin{align*}
& h(t,x,\mu,\bar a) - h(t,x,\mu, a) -  D_a h(t,x,\mu, a) (\bar a -a ) \geq \lambda |\bar a -a |^2,
\end{align*} 
for any  $\bar a ,  a \in A$ and $(t,x,\mu) \in [0,T] \times \R^d \times \mathcal P _2(\R^d)$.
\end{enumerate} 
\end{assumption}
In particular, these conditions ensure some regularity of the feedback $\hat \alpha$.
\begin{remark}\label{remark Lipschitz feedback}
According to Lemma 3.3 at p.\ 137 Vol.\ I in  \cite{CarmonaDelarue18} , $\hat{\alpha}(t,x,\mu,y)$ is a singleton, and the function $(t,x,\mu,y) \mapsto \hat \alpha (t,x,\mu,y)$ is measurable, locally bounded and such that
\begin{align*}
&|\hat \alpha (t,\bar x,\mu, \bar y) - \hat \alpha (t,x,\mu,y)| \leq L (|\bar x -x | + |\bar y -y| ) ,  \\
& |\hat \alpha (t,x,\mu,y)| \leq L( 1 + |x| + \|\mu\|_1 + |y|), 
\end{align*}
for a constant $L>0$, for any $\bar x, x, \bar y, y \in \R^d$ and $(t,\mu) \in [0,T] \times \mathcal P _2(\R^d)$.
\end{remark}
For $\mu \in \mathcal M _{\text{\tiny{$B$}}}^2$, since the process $\mu$ is adapted the noise $B$, system FBSDE$_\mu$ \eqref{eq FBSDE optimal controls} can be seen as an FBSDE with deterministic coefficients. 
Then, referring to Definition 1.8 at p.\ 16 in Vol.\ II of \cite{CarmonaDelarue18}, we enforce the following requirement,
which is satisfied in linear-convex models or under some nondegeneracy of the diffusion terms (see the discussion in Subsection \ref{subsection Sufficient conditions for strong uniqueness for FBSDEs}).
\begin{assumption}
    \label{assumption uniqueness FBSDEs}
    For any $\mu \in \mathcal M _{\text{\tiny{$B$}}}^2$, the strong uniqueness holds for the FBSDE$_\mu$ \eqref{eq FBSDE optimal controls}.
\end{assumption} 

The previous condition, together with Remark \ref{remark Lipschitz feedback}, ensures the existence of optimal controls in strong formulation; that is, controls which are adapted to the noises and to the initial condition, and that can be realized on the underlying (fixed) filtered probability space.  
This result is stated in the following lemma. 
Its proof, which exploits the --so called-- \emph{compactification method} and the stochastic maximum principle, is difficult to refer and it is provided in Appendix \ref{appendix}.  
\begin{lemma}[Existence and characterization of optimal controls]\label{lemma existence of optimal controls}
Under Assumptions \ref{assumption existence}, \ref{assumption uniqueness FBSDEs}, for any $\mu \in \mathcal M _{\text{\tiny{$B$}}}^2$, there exists a unique optimal control 
$\alpha^\mu$. 
Moreover, such a control is determined by $\alpha^\mu = (\hat \alpha (t,X_t,\mu_t,Y_t))_{t\in [0,T]}$, where  $(X,Y,Z, Z^\circ)$ is the unique solution to the FBSDE$_\mu$ \eqref{eq FBSDE optimal controls}.
\end{lemma}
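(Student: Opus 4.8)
The plan is to build the proof on three pillars: existence of an optimal control by the \emph{compactification method}, its characterization through the \emph{necessary} part of the stochastic maximum principle, and the strong uniqueness of the FBSDE$_\mu$ \eqref{eq FBSDE optimal controls} granted by Assumption \ref{assumption uniqueness FBSDEs}, which I would use both to pass from the weak to the strong formulation and to derive uniqueness of the control. Since $\mu\in\mathcal M _{\text{\tiny{$B$}}}^2$ is fixed and $\mathbb F ^{\text{\tiny{$B$}}}$-adapted, the coefficients of the control problem are, after conditioning on $B$, deterministic, and the state equation \eqref{eq controlled SDE} is well posed for each $\alpha\in\mathcal A$ because $b_1,\sigma,\sigma^\circ$ are Lipschitz in $x$ (Assumption \ref{assumption existence}(\ref{assumption existence: Lipschitzianity data SDE})).

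\emph{Existence.} First I would note that $\inf_\alpha J(\alpha,\mu)<\infty$, e.g.\ by testing the constant control $a_0$ and using the quadratic upper bounds together with the standard $L^2$ moment estimate for the state. Conversely, the coercivity $h\geq\kappa(|a|^2-1)$ and $g\geq-\kappa$ from Assumption \ref{assumption existence}(\ref{assumption existence: growth conditions on costs}) bound $J$ from below and force any minimizing sequence $(\alpha^n)_n$ to stay bounded in $L^2(\Omega\times[0,T])$. I would then relax the controls as random measures on $[0,T]\times A$ (in the spirit of \cite{CarmonaDelarue18}); the $L^2$-bound yields tightness of the laws of the tuples $(\xi,W,B,q^n,X^n)$ and corresponding moment bounds on the states. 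Passing to a weak limit and identifying it as a relaxed control whose state solves the limiting SDE, the lower semicontinuity in Assumption \ref{assumption existence}(1) shows the limit is a relaxed minimizer. Because $b$ is affine in $a$ and $h(t,x,\mu,\cdot)$ is convex, the Roxin-type convexity condition holds and the relaxed minimizer is realized by an \emph{ordinary} control, giving an optimal control a priori in weak formulation.

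\emph{Characterization.} Applying the necessary stochastic maximum principle to the optimal pair $(X^\mu,\alpha^\mu)$---legitimate since $A$ is convex, the data are $C^1$ in $x$, and the adjoint data are integrable by Assumption \ref{assumption existence}(\ref{assumption existence: growth conditions on costs})---produces an adjoint triple $(Y,Z,Z^\circ)$ solving the backward equation of \eqref{eq FBSDE optimal controls} with terminal value $D_xg(X_T^\mu,\mu_T)$, together with the variational inequality $D_aH(t,X_t^\mu,\mu_t,Y_t,\alpha_t^\mu)(a-\alpha_t^\mu)\geq0$ for all $a\in A$. The $\lambda$-convexity of $h$ in $a$ (Assumption \ref{assumption existence}(4)) makes $a\mapsto H(t,x,\mu,y,a)$ strictly convex with a unique minimizer, so the variational inequality is equivalent to $\alpha_t^\mu=\hat\alpha(t,X_t^\mu,\mu_t,Y_t)$; hence $(X^\mu,Y,Z,Z^\circ)$ solves the FBSDE$_\mu$ \eqref{eq FBSDE optimal controls}.

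\emph{Strong formulation, uniqueness, and the main obstacle.} The pair just produced lives a priori on a (possibly enlarged) weak space. Here Assumption \ref{assumption uniqueness FBSDEs} is decisive: a Yamada--Watanabe-type argument upgrades this weak solution to a strong one, so that $(X,Y,Z,Z^\circ)$ is $\mathbb F$-adapted; then $\alpha^\mu=(\hat\alpha(t,X_t,\mu_t,Y_t))_t$ is $\mathbb F$-progressively measurable and square integrable by the growth of $\hat\alpha$ in Remark \ref{remark Lipschitz feedback}, hence $\alpha^\mu\in\mathcal A$. Uniqueness then follows formally: any optimal control yields, by the necessary principle, a solution of the FBSDE$_\mu$, and all such solutions coincide by Assumption \ref{assumption uniqueness FBSDEs}, so their feedback representations---and thus the optimal controls---agree. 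I expect this last step to be the crux, precisely because no convexity of $H$ or $g$ in $x$ is assumed: one cannot invoke the \emph{sufficient} maximum principle to declare an FBSDE solution optimal, so both the characterization and the uniqueness of the optimal control must rest on the well-posedness of \eqref{eq FBSDE optimal controls} rather than on any sufficiency argument, making the reconciliation of the weak optimizer with the required strong formulation the technically delicate point of the proof.
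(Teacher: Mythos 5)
Your proposal is correct and follows essentially the same route as the paper's proof: existence of an optimizer in a weak/relaxed formulation (the paper delegates this to Haussmann--Lepeltier, whose argument is precisely your compactification-plus-Roxin-convexity sketch), characterization of that optimizer via the necessary stochastic maximum principle, and then a Yamada--Watanabe-type transfer to the original space plus uniqueness of the control, both resting on the strong uniqueness of the FBSDE$_\mu$ in Assumption \ref{assumption uniqueness FBSDEs}. The only technical point you gloss over, which the paper handles explicitly, is that on the enlarged weak space the adjoint BSDE carries an extra orthogonal martingale term, which is shown to vanish because the copy of $\mu$ is adapted to the copy of $B$; this is a detail within the same strategy, not a different approach.
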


Thanks to the previous lemma, one can define the \emph{best reply maps} $\Gamma, \, \mathcal R$ and $R$ as    
\begin{align}\label{eq definition best reply maps}
    \Gamma ( \mu) & := X^\mu,  \quad \mathcal R (\mu) :=  (\mathbb P _{\text{\tiny{$X_t^\mu|B$}}})_{ t\in [0,T]}, \quad \mu \in  \mathcal M _{\text{\tiny{$B$}}}^2, \\ \notag 
    R(M) & := \Gamma ( (\mathbb P _{\text{\tiny{$M_t|B$}}})_{ t\in [0,T]}), \quad M \in  \mathbb H ^2,
\end{align} 
where $X^\mu$ denotes the optimal trajectory for $\mu$.
In this framework, MFG equilibria coincide with fixed points of the best reply map $\mathcal R$; namely,
\begin{equation*} 
    \label{eq MFG as fixed points of best response map measure process}
    \text{ $\mu \in  \mathcal M _{\text{\tiny{$B$}}}^2$ is a MFG equilibrium if and only if $\mu = \mathcal R (\mu)$.}
\end{equation*} 
\begin{remark}[Characterization of MFG equilibria]\label{remark characterization of MFGE}
Define the projection map 
$p:\bH^2 \to  \mathcal M _{\text{\tiny{$B$}}}^2,$ 
$p(M):=(\mathbb P _{\text{\tiny{$M_t|B$}}})_{ t\in [0,T]}$. 
and observe that, in this notation, we have
$$
R(M)= \Gamma \circ p (M), \text{ for any } M \in \bH^2, \quad \text{and} \quad \mathcal R (\mu) = p \circ \Gamma (\mu), \text{ for any } \mu \in \mathcal M _{\text{\tiny{$B$}}}^2.
$$
It follows that, if $\mu \in \mathcal M _{\text{\tiny{$B$}}}^2$ is a fixed point of $\mathcal R$, then $\Gamma(\mu)$ is a fixed point of $R$. Analogously, if $M \in \bH^2$ is a fixed point of  $R$, then the flow $p(M)$ is a fixed point of the map $\mathcal R$.
In other words, the function $p$ is a bijection (with inverse $\Gamma$) between the sets 
\begin{equation}
  \{ \text{Fixed points of $R$} \} \xrightarrow{p} \{ \text{Fixed points of $\mathcal R$} \} = \{ \text{MFG equilibria} \}.
\end{equation}
Moreover, Lemma \ref{lemma existence of optimal controls} has a relevant immediate consequence: The process $\mu \in  \mathcal M _{\text{\tiny{$B$}}}^2$ is a MFG equilibrium if and only if $\mu  =(\mathbb P _{\text{\tiny{$X_t|B$}}}) _{t \in [0,T]}$ with $(X,Y,Z, Z^\circ)$ solution to the MKV FBSDE \eqref{eq MKV FBSDE}.
Also, the process $(X,Y,Z, Z^\circ)$ is a solution to the MKV FBSDE \eqref{eq MKV FBSDE} if and only if $R(X)=X$.
\end{remark}

\subsection{Submodularity and monotonicity of the best reply maps} 
On the set $ \mathcal P _2 (\R^d)$, introduce the \emph{first order stochastic dominance} $\leq^{\text{\tiny{\rm st}}}$, which is defined, for $\mu, \, \bar \mu \in \mathcal P _2 (\R^d)$, as   
$$
\text{ $\mu \leq^{\text{\tiny{\rm st}}} \bar \mu \ $ if and only if $ \ \int_{\R^d} \varphi (x) d\mu(x) \leq  \int_{\R^d} \varphi (x) d\bar \mu (x)$ for any $\varphi \in \Phi$.} 
$$
Here $\Phi$ is the set of measurable functions $\varphi: \R^d \to \R$ which are nondecreasing (i.e., $\varphi(x) \leq \varphi( \bar x)$ if $x \leq \bar x$).  
The relation $\leq^{\text{\tiny{\rm st}}}$ is actually an order relation on the set $\mathcal P _2 (\R^d)$, and $(\mathcal P _2 (\R^d), \leq^{\text{\tiny{\rm st}}})$ is a partially ordered set.     
We also observe that, for two square integrable random variables $\zeta, \bar \zeta$, we have
\begin{equation}
    \label{eq monotonicity of conditional distribution}
    \text{ $\mathbb P _{\text{\tiny{$\zeta |B$}}} \leq^{\text{\tiny{\rm st}}} \mathbb P _{\text{\tiny{$\bar \zeta |B$}}} \quad \mathbb P$-a.s.$\quad $if$\quad \zeta \leq \bar \zeta \quad \mathbb P$-a.s.}
\end{equation}

The following two classes of assumptions are the key structural conditions which will allow to employ a lattice theoretical approach. 

\begin{assumption}[Separable Hamiltonian and mean field independent dynamics]\label{assumption for J} \
\begin{enumerate} 
    \item \label{assumption structural for J} (Separability) $k=d$ and, for $i=1,...,d$, one has 
    \begin{enumerate}
\item $b^i(t,x,\mu,a) = b_1^i(t,x^i) + b_2^i (t) a^i$ for functions 
$
b_1^i: [0,T] \times \mathbb{R}  \rightarrow  \R$, $ b_2^i:\, [0,T]  \rightarrow  \R  $;
\item $\sigma^{i}= (\sigma^i_j)_{j=1,...,d_1}$ and $\sigma^{i,\circ}= (\sigma^{i,\circ}_j)_{j=1,...,d_2}$ only depend on $x^i$;
\item $ h(t,x,\mu,a) = f(t,x,\mu) + \sum_{i=1}^d l^i(t,x^i,a^i)$ for functions $l^i: \, [0,T] \times \mathbb{R} \times A^i \rightarrow \mathbb{R}$, $f:\, [0,T] \times \mathbb{R}^d \times \mathcal{P}_2(\mathbb{R}^d)\rightarrow \mathbb{R}$.
    \end{enumerate}
    \item \label{assumption submodularity multidimensional for J} (Submodularity conditions)
For $dt$ a.a.\ $t \in [0,T]$, for $\phi \in \{f(t,\cdot, \cdot), g \}$, we have
\begin{enumerate}
    \item \label{assumption submodularity multidimensional for J: submodular in x} $\phi({x} \land \bar x,\mu )  + \phi({x} \lor \bar x,{\mu})  \leq \phi(x,\mu) + \phi(\bar{x},{\mu})  $ for all $ {x}, \bar x \in \mathbb{R}^d$ and $ \mu \in \mathcal{P}_2(\mathbb{R}^d)$;  
    \item \label{assumption submodularity multidimensional for J: decreasing differences x mu} $ \phi(\bar{x},\bar{\mu})- \phi(x,\bar{\mu}) \leq \phi(\bar{x},\mu)-  \phi(x,\mu )$
for all $ {x}, \bar x \in \mathbb{R}^d$ and $ {\mu}, \bar \mu \in \mathcal{P}_2(\mathbb{R}^d)$ with $x \leq \bar{x} $ and $\mu \leq^{\text{\tiny{\rm st}}}  \bar \mu$. 
\end{enumerate}
\end{enumerate}
\end{assumption}

\begin{assumption}[Nonseparable Hamiltonian, affine drift and costs concave in the state]\label{assumption for comparison} \ 
\begin{enumerate}  
    \item \label{assumption structural with comparison} The coefficients of the SDE have the following structure
        \begin{enumerate} 
            \item $b(t,x,\mu,a) = b_0(t,\mu) + \bar b _1(t) x +  b_2 (t) a$ for functions 
\begin{align*}
    b_0 &: [0,T] \times \mathcal P _2 (\R^d)  \rightarrow  \R^d, \\
    \bar b _1 &: \, [0,T]  \rightarrow  \R^{d\times d}, \quad  \bar b _1(t) = ((\bar b _1(t))^i_j)_{\text{\tiny{$j=1,...,d$}}}^{\text{\tiny{$i=1,...,d$}}}, \\
    b_2 &: \, [0,T]  \rightarrow  \R^{d \times k},  \quad b _2 (t) = (( b _2 (t))^i_ \ell )_{\text{\tiny{$\ell =1,...,k$}}}^{\text{\tiny{$i=1,...,d$}}}; 
\end{align*}
        \item \label{assumption monotonicity multidimensional with comparison} For $dt$ a.a.\ $t \in [0,T]$, one has $b_0(t,\mu) \leq b_0(t,\bar \mu)$ for any $\mu, \bar \mu \in \mathcal P _2 (\R^d)$ with $\mu \leq^{\text{\tiny{\rm st}}}  \bar \mu$ and  $(\bar b _1(t))^i_j, (b_2(t))^i_\ell \geq 0 $ for any $i,j=1,...,d$, $\ell = 1,...,k$;
            \item $\sigma^{i}= (\sigma^i_j)_{j=1,...,d_1}$ and $\sigma^{i,\circ}= (\sigma^{i,\circ}_j)_{j=1,...,d_2}$ only depend on $x^i$.
        \end{enumerate} 
    \item\label{assumption for comparison: growth condition Dh Dg} (Growth conditions)  $|D_x h (t,x,\mu,a)| + |D_x g (x,\mu)| \leq K(1+\| \mu \|_1 )$, for any $(t,x,\mu, a) \in [0,T] \times \R ^d \times \mathcal P _2(\R^d) \times A$.  
    \item \label{assumption submodularity multidimensional with comparison} (Submodularity conditions) 
        For $dt$ a.a.\ $t \in [0,T]$,  we have
         \begin{enumerate}
                \item\label{assumption submodularity multidimensional with comparison.h submodular a} $h(t,x,\mu,a\land \bar a ) + h(t,x,\mu,a\lor \bar a ) \leq h(t,x,\mu,a ) +h(t,x,\mu,\bar a )$  for all $ x \in \mathbb{R}^d$, $ \mu \in \mathcal{P}_2(\mathbb{R}^d)$ and $a,\bar a \in A$;
                \item\label{assumption submodularity multidimensional with comparison.h decreasing differences} $h(t, \bar x,\bar \mu, \bar a ) - h(t, \bar x,\bar \mu, a ) \leq h(t,x, \mu, \bar a ) +h(t,x,\mu, a )$  for all $ x, \bar x \in \mathbb{R}^d$ with $x \leq \bar x$, $ \mu, \bar \mu \in \mathcal{P}_2(\mathbb{R}^d)$ with $\mu \leq^{\text{\tiny{\rm st}}}  \bar \mu$ and $a,\bar a \in A$ with $a\leq \bar a$;
                \item\label{assumption submodularity multidimensional with comparison.h for using comparison principle} $D_x h(t, \bar x,\bar \mu, \bar a ) \leq D_xh(t,  x, \mu,  a )$ and $D_xg( \bar x,\bar \mu ) \leq D_xg(  x, \mu)$ for all $ x, \bar x \in \mathbb{R}^d$ with $x \leq \bar x$, $ \mu, \bar \mu \in \mathcal{P}_2(\mathbb{R}^d)$ with $\mu \leq^{\text{\tiny{\rm st}}}  \bar \mu$ and $a,\bar a \in A$ with $a\leq \bar a$.
        \end{enumerate}
\end{enumerate}
\end{assumption}
A function $\phi$ satisfying Condition \ref{assumption submodularity multidimensional for J: submodular in x} in Assumption \ref{assumption for J} is said to be sumbodular in $x$, while a function $\phi$ satisfying Condition \ref{assumption submodularity multidimensional for J: decreasing differences x mu} in Assumption \ref{assumption for J} is said to have decreasing differences in $x$ and $\mu$.
Using this terminology, Condition \ref{assumption submodularity multidimensional with comparison.h submodular a} in Assumption \ref{assumption for comparison} means that the function 
$h(t,x,\mu, \cdot)$ is submodular in $a$, while Condition \ref{assumption submodularity multidimensional with comparison.h decreasing differences} that the function 
$h(t,\cdot,\cdot, \cdot)$ have decreasing differences in $a$ and $(x,\mu)$. 

The conditions above are particularly easy to check for \emph{mean-field interactions of scalar type} and \emph{mean-field interactions of order-1}.
Examples are discussed in Subsection \ref{subsection Checking the submodularity conditions}.

\begin{remark}
    \label{remark Topkis}   
    Conditions \ref{assumption for J} and \ref{assumption for comparison} represent a natural counterpart, for stochastic differential MFGs, of the submodularity conditions introduced in \cite{To} for static $N$-player games. 
    For this reason, when either Condition \ref{assumption for J} or \ref{assumption for comparison} is satisfied, we say the MFG to be submodular.
    Similar (but less general) conditions have been used in \cite{dianetti.ferrari.fischer.nendel.2019} (see also  
    \cite{Adlakhaetal, CarmonaDelarueLacker16, dianetti.ferrari.fischer.nendel.2022unifying}).    
\end{remark}

\begin{remark}[Submodularity and Lasry-Lions monotonicity condition]
    \label{remark Lasry-Lions monotonicity condition}
A well known structural condition which ensures uniqueness of equilibria in MFGs is  the --so called-- \emph{Lasry-Lions monotonicity condition}, which is typically required for separable Hamiltonians.  
In the notation of Condition \ref{assumption submodularity multidimensional for J} in Assumption \ref{assumption structural for J}, the Lasry-Lions monotonicity condition is satisfied if
\begin{equation}
\label{ass.monotonicity.Lasry.Lion}
    \int_{\R^d} (\phi(x, \bar{\mu}) - \phi(x, {\mu}))d(\bar{\mu}-\mu)(x) \geq 0, \quad \text{for any } \mu,  \bar \mu \in \mathcal{P}_2({\R^d}).
\end{equation}

Instead, when Condition \ref{assumption submodularity multidimensional for J} in Assumption \ref{assumption structural for J} is verified, then the map  $\phi(\cdot, \bar\mu)-\phi(\cdot, \mu)$ is nonincreasing for $\mu, \bar\mu \in \mathcal P_2 (\R^d)$ with $\mu \leq^{\text{\tiny{\rm st}}} \bar \mu$, and we obtain  
\begin{equation*}
    \int_{\R^d} (\phi(x, \bar{\mu}) - \phi(x, {\mu}))d(\bar{\mu}-\mu)(x) \leq 0, \quad \text{for any } \mu,  \bar \mu  \in \mathcal{P}_2 ({\R^d}) \text{ with } \mu \leq^{\text{\tiny{\rm st}}} \bar\mu.
\end{equation*}
The latter represents, roughly speaking, an opposite version of condition \eqref{ass.monotonicity.Lasry.Lion}. 
\end{remark}     


Either of the previous sets of conditions implies the monotonicity of the best reply maps. 
This property is formalized in the following proposition, which is proved in Subsections \ref{subsection proof proposition best reply map increasing: cost functional} and \ref{subsection proof best reply increasing: comparison}.  

\begin{proposition}[Monotonicity of best reply maps]
    \label{proposition best reply map increasing}
Under Assumptions \ref{assumption existence}, \ref{assumption uniqueness FBSDEs} and either Assumption \ref{assumption for J} or \ref{assumption for comparison}, 
the following monotonicity properties hold true:
\begin{enumerate}
    \item\label{proposition best reply map increasing: claim Gamma} The  map $\Gamma$ is increasing; i.e., $\Gamma( \mu) \leq \Gamma (\bar \mu)$ for any $\mu, \, \bar \mu \in  \mathcal M _{\text{\tiny{$B$}}}^2$ with $\mu \leq^{\text{\tiny{\rm st}}}  \bar \mu$;
    \item The map $\mathcal R$ is increasing; i.e., $\mathcal R( \mu) \leq^{\text{\tiny{\rm st}}}  \mathcal R (\bar \mu)$ for any $\mu, \, \bar \mu \in  \mathcal M _{\text{\tiny{$B$}}}^2$ with $\mu \leq^{\text{\tiny{\rm st}}}  \bar \mu$; 
    \item  The map $R$ is increasing; i.e., $R (M) \leq R(\bar M)$ for any $M, \, \bar M \in \mathbb H ^2$ with $M \leq \bar M$. 
\end{enumerate}
\end{proposition}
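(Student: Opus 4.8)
The plan is to reduce all three statements to claim \eqref{proposition best reply map increasing: claim Gamma}, that the trajectory map $\Gamma$ is increasing, and then to prove that claim under each of the two structural hypotheses. First I would derive claims (2) and (3) from claim (1) using the factorizations $\mathcal R = p\circ\Gamma$ and $R=\Gamma\circ p$ of Remark \ref{remark characterization of MFGE} together with the monotonicity of the conditional law in \eqref{eq monotonicity of conditional distribution}. Indeed, if $\mu\leq^{\text{\tiny{\rm st}}}\bar\mu$, then claim (1) gives $\Gamma(\mu)=X^\mu\leq X^{\bar\mu}=\Gamma(\bar\mu)$ $\mathbb P\otimes\pi$-a.e., whence $\mathbb P_{\text{\tiny{$X_t^\mu|B$}}}\leq^{\text{\tiny{\rm st}}}\mathbb P_{\text{\tiny{$X_t^{\bar\mu}|B$}}}$ by \eqref{eq monotonicity of conditional distribution}, which is precisely $\mathcal R(\mu)\leq^{\text{\tiny{\rm st}}}\mathcal R(\bar\mu)$; likewise, $M\leq\bar M$ gives $p(M)\leq^{\text{\tiny{\rm st}}}p(\bar M)$ by \eqref{eq monotonicity of conditional distribution}, and applying claim (1) yields $R(M)=\Gamma(p(M))\leq\Gamma(p(\bar M))=R(\bar M)$. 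Everything therefore rests on claim (1).

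Under the separable Assumption \ref{assumption for J} I would run a lattice-programming (Topkis-type) argument directly on the optimal trajectories. Fix $\mu\leq^{\text{\tiny{\rm st}}}\bar\mu$, let $(X^\mu,\alpha^\mu)$ and $(X^{\bar\mu},\alpha^{\bar\mu})$ be the optimal pairs furnished by Lemma \ref{lemma existence of optimal controls}, and set $\underline X:=X^\mu\land X^{\bar\mu}$ and $\overline X:=X^\mu\lor X^{\bar\mu}$ coordinatewise. The crucial point is that, since under Assumption \ref{assumption for J} the coefficients $\sigma^i,\sigma^{i,\circ}$ depend only on $x^i$, the processes $X^{\mu,i}$ and $X^{\bar\mu,i}$ share the same (state-dependent) diffusion coefficient; their difference then accumulates no local time at its zero set, so by Tanaka's formula $\underline X$ and $\overline X$ are again admissible trajectories, driven by the controls $\underline\alpha,\overline\alpha$ obtained by splicing $\alpha^\mu$ and $\alpha^{\bar\mu}$ along $\{X^\mu\leq X^{\bar\mu}\}$. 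I would then establish the submodularity inequality
\begin{equation*}
J(\underline\alpha,\mu)+J(\overline\alpha,\bar\mu)\leq J(\alpha^\mu,\mu)+J(\alpha^{\bar\mu},\bar\mu),
\end{equation*}
splitting the cost into the coupling part $f,g$ and the separable running cost $\sum_i l^i$: the $l^i$-terms are left invariant by the splicing (the pair bottom/top is a coordinatewise relabelling of $\mu/\bar\mu$), while the $f,g$-terms are controlled by submodularity in $x$ (Condition \ref{assumption submodularity multidimensional for J: submodular in x}) combined with decreasing differences in $(x,\mu)$ (Condition \ref{assumption submodularity multidimensional for J: decreasing differences x mu}). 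Combining this with the optimality inequalities $J(\alpha^\mu,\mu)\leq J(\underline\alpha,\mu)$ and $J(\alpha^{\bar\mu},\bar\mu)\leq J(\overline\alpha,\bar\mu)$ forces all inequalities to be equalities, so $\underline\alpha$ is also optimal for $\mu$; uniqueness of the optimizer in Lemma \ref{lemma existence of optimal controls} then gives $X^\mu=\underline X\leq X^{\bar\mu}$, that is $\Gamma(\mu)\leq\Gamma(\bar\mu)$.

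Under the nonseparable Assumption \ref{assumption for comparison} I would instead invoke the multidimensional FBSDE comparison principle of \cite{chen.luo2021}, applied to the two systems FBSDE$_\mu$ and FBSDE$_{\bar\mu}$ in \eqref{eq FBSDE optimal controls}. Here the affine structure of $b$ with nonnegative coefficients $\bar b_1,b_2$ and the monotonicity of $b_0$ in the measure (Condition \ref{assumption monotonicity multidimensional with comparison}), together with the monotonicity of $D_xh$ and $D_xg$ in $(x,\mu)$ (Condition \ref{assumption submodularity multidimensional with comparison.h for using comparison principle}), supply exactly the sign structure of the data $\hat b,\hat h,\hat g$ required by the hypotheses of that comparison principle, whose conclusion delivers $X^\mu\leq X^{\bar\mu}$.

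I expect the main obstacle to be the separable case, and specifically the rigorous verification that the lattice operations $X^\mu\land X^{\bar\mu}$ and $X^\mu\lor X^{\bar\mu}$ produce admissible elements of $\mathbb H^2$: showing that the difference $X^{\mu,i}-X^{\bar\mu,i}$ has vanishing local time at the origin (exploiting the common, state-dependent diffusion coefficient), and exhibiting a progressively measurable, $A$-valued, square-integrable control that realizes the spliced drift of the min and the max. The nonseparable case is comparatively routine once claim (1) has been isolated, the only delicate point being to match the Hamiltonian-derived data $\hat b,\hat h,\hat g$ to the precise structural hypotheses of \cite{chen.luo2021}.
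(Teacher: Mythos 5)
Your overall architecture coincides with the paper's: the paper likewise reduces claims (2) and (3) to claim (1) through the monotonicity of the projection $p$ (its \eqref{eq projection monotone}), proves your splicing step as a separate lemma (Lemma \ref{lemma lattice of trajectories}, where the vanishing of the local time of $X^{\mu,i}-X^{\bar\mu,i}$ at zero is obtained from the Meyer--It\^o formula and the common, coordinatewise Lipschitz diffusion coefficients), and then runs exactly your four-point Topkis inequality --- in the paper it appears in the rearranged form $0\leq J(\alpha^{\vee},\bar\mu)-J(\bar\alpha,\bar\mu)\leq J(\alpha,\mu)-J(\alpha^{\wedge},\mu)$, see \eqref{J_J.switch} --- before concluding by optimality and uniqueness of the optimal control. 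So your reduction and your separable branch are correct and are essentially the paper's own argument.

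The nonseparable branch, however, has a genuine gap. You assert that Condition \ref{assumption monotonicity multidimensional with comparison} together with Condition \ref{assumption submodularity multidimensional with comparison.h for using comparison principle} ``supply exactly the sign structure of the data $\hat b,\hat h,\hat g$'' required by \cite{chen.luo2021}. They do not, because the FBSDE coefficients are compositions with the Hamiltonian minimizer: $\hat b(t,x,\mu,y)=b_0(t,\mu)+\bar b_1(t)x+b_2(t)\,\hat\alpha(t,x,\mu,y)$ and $\hat h(t,x,\mu,y)=\bar b_1(t)^{\top}y+D_xh\big(t,x,\mu,\hat\alpha(t,x,\mu,y)\big)$. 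Condition \ref{assumption submodularity multidimensional with comparison.h for using comparison principle} controls $D_xh$ only when its $a$-argument moves upward together with $(x,\mu)$, and the nonnegativity of $b_2$ is of no help unless one knows in which direction $\hat\alpha$ itself varies. The indispensable preliminary step --- Step 1 of the paper's proof, culminating in \eqref{eq comparative alpha} --- is therefore to show that $\hat\alpha$ is increasing in $(x,\mu)$ and decreasing in $y$. That step is itself a Topkis-type argument resting on precisely the two hypotheses your proposal never invokes: the submodularity of $h$ in $a$ (Condition \ref{assumption submodularity multidimensional with comparison.h submodular a}) and the decreasing differences of $h$ in $a$ and $(x,\mu)$ (Condition \ref{assumption submodularity multidimensional with comparison.h decreasing differences}), combined with the linearity of $a\mapsto y\,b_2(t)a$, the sign of $b_2$, and uniqueness of the minimizer. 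Without it, the hypotheses of the comparison theorem cannot be verified, so this part of the argument is not ``routine.'' A smaller additional point: the monotonicity one actually obtains for $\hat h,\hat g$ (and in $y$ for $\hat b$) has the opposite direction to that assumed in \cite{chen.luo2021}, so the paper must appeal to a correspondingly adjusted variant of their Theorem 2.2 rather than apply it verbatim; your sketch should acknowledge this as well.
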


We observe that, in light of Lemma \ref{lemma existence of optimal controls}, Proposition \ref{proposition best reply map increasing} can be restated in terms of a comparison principle for the forward component of the solution of the FBSDE \eqref{eq FBSDE optimal controls}.
\begin{corollary}[Comparison principle]
    \label{corollary comparison principle}
Under Assumptions \ref{assumption existence}, \ref{assumption uniqueness FBSDEs} and either Assumption \ref{assumption for J} or \ref{assumption for comparison}, let $\mu, \, \bar \mu \in  \mathcal M _{\text{\tiny{$B$}}}^2$ and let $(X,Y,Z, Z^\circ)$, $(\bar X, \bar Y, \bar Z, \bar Z ^\circ)$ be the solutions to the FBSDE \eqref{eq FBSDE optimal controls} with $\mu$, $\bar \mu$, respectively.  
If $\mu \leq^{\text{\tiny{\rm st}}}  \bar \mu$, then $X \leq \bar X$.
\end{corollary}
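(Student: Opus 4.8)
The plan is to read the statement off directly from Lemma \ref{lemma existence of optimal controls} together with the first claim of Proposition \ref{proposition best reply map increasing}: essentially all the substance already lives in those two results, and the corollary is merely their translation into the language of the FBSDE \eqref{eq FBSDE optimal controls}. Since both are stated earlier, nothing genuinely new is required, and the argument is a short chain of identifications.

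First I would recall the identification furnished by Lemma \ref{lemma existence of optimal controls}: for any $\mu \in \mathcal M _{\text{\tiny{$B$}}}^2$, the unique optimal control is $\alpha^\mu = (\hat \alpha (t,X_t,\mu_t,Y_t))_{t\in[0,T]}$, where $(X,Y,Z,Z^\circ)$ is the unique solution of the FBSDE$_\mu$ \eqref{eq FBSDE optimal controls}. In particular, the forward component $X$ of this solution is exactly the optimal trajectory $X^\mu$, which by the definition \eqref{eq definition best reply maps} of the best reply map equals $\Gamma(\mu)$. Applying the same identification to $\bar\mu$ gives $\bar X = X^{\bar\mu} = \Gamma(\bar\mu)$ for the forward component of the solution associated with $\bar\mu$. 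Thus the comparison $X \leq \bar X$ to be proved is, verbatim, the inequality $\Gamma(\mu) \leq \Gamma(\bar\mu)$ in $\mathbb H^2$.

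It then remains only to invoke monotonicity. Since $\mu \leq^{\text{\tiny{\rm st}}} \bar\mu$ by hypothesis, claim \ref{proposition best reply map increasing: claim Gamma} of Proposition \ref{proposition best reply map increasing} yields $\Gamma(\mu) \leq \Gamma(\bar\mu)$, i.e.\ $X \leq \bar X$ ($\mathbb P \otimes \pi$-a.e., componentwise), which is the assertion. I do not expect any real obstacle inside the corollary: it is an immediate consequence. The hard part is concentrated entirely in Proposition \ref{proposition best reply map increasing}, whose proof must produce the pathwise comparison $X^\mu \leq X^{\bar\mu}$ from the ordering of the measure flows—under Assumption \ref{assumption for J} via a direct argument exploiting the separable structure and the submodularity conditions to control the sign of the drift of the difference, and under Assumption \ref{assumption for comparison} via the multidimensional FBSDE comparison principle of \cite{chen.luo2021}. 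As those results are already available, the present corollary needs nothing further.
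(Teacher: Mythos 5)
Your proposal is correct and is exactly the paper's own argument: the paper presents this corollary as an immediate restatement of Proposition \ref{proposition best reply map increasing} ``in light of Lemma \ref{lemma existence of optimal controls}'', i.e.\ it identifies the forward components with $\Gamma(\mu)$ and $\Gamma(\bar\mu)$ via the uniqueness in Lemma \ref{lemma existence of optimal controls} and then invokes Claim \ref{proposition best reply map increasing: claim Gamma}. You also correctly locate all the substance in Proposition \ref{proposition best reply map increasing}, with the two cases (direct submodularity argument under Assumption \ref{assumption for J}, the comparison theorem of \cite{chen.luo2021} under Assumption \ref{assumption for comparison}), matching the paper's structure.
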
 
\begin{remark}
    \label{remark comparison principle}
While the previous result is known under Assumption \ref{assumption for comparison}, it is genuinely new under Assumption \ref{assumption for J}. 
Indeed, under Assumption \ref{assumption for comparison}, Corollary \ref{corollary comparison principle} follows from a comparison principle for FBSDEs (Theorem 2.2 in \cite{chen.luo2021}), and such a result will actually be used in order to prove Proposition \ref{proposition best reply map increasing} (see Subsection \ref{subsection proof best reply increasing: comparison}). 
On the other hand, under Assumption \ref{assumption for J}, Corollary \ref{corollary comparison principle} covers relevant cases not covered by Theorem 2.2 in \cite{chen.luo2021},  as, for example, the cases in which $h$ and $g$ are convex in $x$ (notice indeed that some monotonicity assumptions in \cite{chen.luo2021} need to be inverted to be employed the present context). 

Comparison principles for multidimensional FBSDEs are difficult to obtain. 
Indeed,  the corresponding decoupling field is the solution of a parabolic system, for which comparison principles fail in general (see, e.g., \cite{cvitanic.ma.96} or Example 6.2 at p.\ 22 in \cite{ma.yong.99}). 
Corollary \ref{corollary comparison principle} offers instead a new natural point of view on this problem, and it connects a comparison principle to properties of the underlying optimization problem. 
Roughly speaking, for parameter-dependent-FBSDEs corresponding (via the stochastic maximum principle) to a minimization problem which is submodular in the parameter, a comparison principle for the forward component holds.
\end{remark}

\subsection{Lattice of processes and a priori estimates} 

Given processes $M, \, \bar M \in \mathbb H ^2$, set 
\begin{equation*}
    M \leq \bar M \text{ if and only if } M_t \leq \bar M _t \ \P \otimes \pi\text{-a.e.\ in $\Omega \times [0,T]$}, 
\end{equation*}
and define a lattice structure on $\mathbb H ^2$ by setting
\begin{equation}
    \label{eq definition land lor processes}
    (M\land \bar M)_t := M_t \land \bar M _t \quad \text{and} \quad (M\lor \bar M)_t := M_t \lor \bar M _t, \quad \mathbb P \otimes \pi\text{-a.e.\ in $\Omega \times [0,T]$.}
\end{equation}
The set $(\mathbb H ^2, \leq)$ is a partially ordered set, and the operations $\land, \, \lor$ provide a lattice structure on $\mathbb H ^2$, which is compatible with the order relation $\leq$.
Similarly, for  processes $\mu, \, \bar \mu \in\mathcal M _{\text{\tiny{$B$}}}^2$, set 
\begin{equation*}
    \mu \leq^{\text{\tiny{\rm st}}}  \bar \mu \text{ if and only if } \mu _t \leq^{\text{\tiny{\rm st}}}  \mu _t \ \P \otimes \pi\text{-a.e.\ in $\Omega \times [0,T]$}.  
\end{equation*}
The set $(\mathcal M _{\text{\tiny{$B$}}}^2,\leq^{\text{\tiny{\rm st}}} ) $ is a partially ordered set, but it does not have a lattice structure when $d>1$ (see Remark \ref{remark multidimensional} below). 
Moreover, from \eqref{eq monotonicity of conditional distribution}, we have that the projection map $p:\mathbb H ^2 \to \mathcal M _{\text{\tiny{$B$}}}^2 $ (see Remark \ref{remark characterization of MFGE}) is incresing; that is
\begin{equation}
    \label{eq projection monotone} 
    p(M) \leq^{\text{\tiny{\rm st}}} p(\bar M) \quad \text{if} \quad M \leq \bar M, \quad \text{for any } M,\bar M \in \mathbb H ^2. 
\end{equation} 

Taking $\essinf$ and $\esssup$ with respect to the measure $\P \otimes \pi$, define the processes
\begin{equation}\label{eq definition Gamma upper lower bounds}
\underline{\Gamma} := \essinf \{ R(M)| M \in \bH^2 \}  \quad \text{and} \quad 
\overline{\Gamma} := \esssup \{ R(M)| M \in \bH^2 \}. 
\end{equation}

In order to ensure the square integrability of the processes $\underline \Gamma$ and $\overline \Gamma$, we impose the following requirements. 
\begin{assumption}\label{assumption a priori estimates}
The drift $b_1$ satisfies $b_1(t,x,\mu) \leq K(1+|x|)$ for any $(t,x,\mu) \in [0,T] \times \R ^d \times \mathcal P _2 (\R^d)$, 
 and either of the following conditions is satisfied:
\begin{enumerate}
    \item\label{assumption a priori estimates A compact} $A$ is compact;
    \item\label{assumption a priori estimates coercivity} For  $K$ as in Assumption \ref{assumption existence}, the functions $h,g$ satisfy 
    \begin{align*} 
    &  g (x,\mu)\leq K(1+|x|^2), \\ 
    &  h(t,x,\mu,a) \leq K(1+|x|^2 + |a|^2),
    \end{align*}
    for any $(t,x,\mu, a) \in [0,T] \times \R ^d \times \mathcal P _2(\R^d) \times A$.  
\end{enumerate}
\end{assumption}

From Assumption \ref{assumption a priori estimates} and Proposition \ref{proposition best reply map increasing} we  obtain the following result, which is proved in Subsection \ref{subsection proof a priori estimate}.
\begin{lemma}[A priori estimates]\label{lemma a priori estimates}
Under Assumptions \ref{assumption existence}, \ref{assumption uniqueness FBSDEs}, \ref{assumption a priori estimates} and either Assumption \ref{assumption for J} or \ref{assumption for comparison}, we have
\begin{equation}\label{eq moment a priori estimates}
\sup_{\mu \in \mathcal M _{\text{\tiny{$B$}}}^2} \mathbb E \bigg[ \sup_{t \in [0,T]} |X_t^\mu|^2 \bigg] < \infty.
\end{equation}
Moreover, $\underline{\Gamma}, \overline{\Gamma} \in \bH^2$.
\end{lemma}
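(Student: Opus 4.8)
The plan is to prove the statement in two steps: first the uniform moment bound \eqref{eq moment a priori estimates}, and then the membership $\underline\Gamma,\overline\Gamma\in\bH^2$, which will follow from \eqref{eq moment a priori estimates} together with the monotonicity of $R$ established in Proposition \ref{proposition best reply map increasing}. For \eqref{eq moment a priori estimates} I would first obtain a bound on the control energy that is uniform in the parameter, namely $\sup_{\mu\in\mathcal M _{\text{\tiny{$B$}}}^2}\mathbb{E}\int_0^T|\alpha_t^\mu|^2\,dt<\infty$. Under the compactness condition in Assumption \ref{assumption a priori estimates} this is immediate, since the admissible controls are then uniformly bounded. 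Under the coercivity condition in Assumption \ref{assumption a priori estimates}, I would argue by optimality (Lemma \ref{lemma existence of optimal controls}): comparing $\alpha^\mu$ with the constant control $a_0$ gives $J(\alpha^\mu,\mu)\le J(a_0,\mu)$, where the left-hand side is bounded below by $\kappa\,\mathbb{E}\int_0^T|\alpha^\mu_t|^2\,dt-\kappa(T+1)$ thanks to the coercivity $h\ge\kappa(|a|^2-1)$ and $g\ge-\kappa$ of Assumption \ref{assumption existence}, while the right-hand side is bounded above, uniformly in $\mu$, using the $\mu$-free growth bounds of Assumption \ref{assumption a priori estimates} applied to the reference trajectory $X^{a_0,\mu}$ of the state equation driven by $a_0$. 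The crucial feature is that these upper bounds do not involve $\|\mu\|_2$, so the estimate survives the supremum over the unbounded set $\mathcal M _{\text{\tiny{$B$}}}^2$; the needed uniform bound $\sup_\mu\mathbb{E}[\sup_t|X^{a_0,\mu}_t|^2]<\infty$ for the reference trajectory follows from the standard Gr\"onwall/Burkholder--Davis--Gundy a priori estimate, using the linear growth of the coefficients in Assumption \ref{assumption existence} (under Assumption \ref{assumption for J} the drift $b_1$ is independent of $\mu$, while under Assumption \ref{assumption for comparison} the affine structure of $b$ together with the one-sided bound $b_1(t,x,\mu)\le K(1+|x|)$ controls the measure-dependent part).

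Once the control energy is bounded uniformly, I would apply the same SDE a priori estimate to the optimal trajectory $X^\mu$ itself, now with control $\alpha^\mu$: from $|b(t,X^\mu_t,\mu_t,\alpha^\mu_t)|\le C(1+|X^\mu_t|+|\alpha^\mu_t|)$ and the bounds on $\sigma,\sigma^\circ$, Gr\"onwall's inequality yields $\mathbb{E}[\sup_t|X^\mu_t|^2]\le C(1+\mathbb{E}|\xi|^2+\mathbb{E}\int_0^T|\alpha^\mu_t|^2\,dt)$, and taking the supremum over $\mu$ gives \eqref{eq moment a priori estimates}.

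For the second claim I would exploit the monotonicity of $R$. Since $\bH^2$ is a lattice and $R$ is increasing, for $M,\bar M\in\bH^2$ one has $R(M\land\bar M)\le R(M)\land R(\bar M)$, so the family $\{R(M):M\in\bH^2\}$ is downward directed (and dually upward directed). Hence there is a sequence $M_n\in\bH^2$ with $R(M_n)\downarrow\underline\Gamma$ $\mathbb{P}\otimes\pi$-a.e., obtained by passing to finite minima of a countable subfamily realizing the essential infimum in \eqref{eq definition Gamma upper lower bounds}. Each $R(M_n)=\Gamma(p(M_n))$ is an optimal trajectory, so by \eqref{eq moment a priori estimates} it satisfies $\mathbb{E}[\sup_t|R(M_n)_t|^2]\le C$ with $C$ independent of $n$; since $R(M_n)_t\to\underline\Gamma_t$ pointwise, Fatou's lemma gives $\mathbb{E}\int_0^T|\underline\Gamma_t|^2\,d\pi(t)\le\liminf_n\mathbb{E}\int_0^T|R(M_n)_t|^2\,d\pi(t)\le C\,\pi([0,T])<\infty$. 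As a pointwise limit of $\mathbb F$-progressively measurable processes, $\underline\Gamma$ is $\mathbb F$-progressively measurable, whence $\underline\Gamma\in\bH^2$; the argument for $\overline\Gamma$ is symmetric, using an increasing sequence $R(M_n)\uparrow\overline\Gamma$.

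The step I expect to be the main obstacle is the uniform-in-$\mu$ control of the reference trajectory and of the cost in the first part, because $\mu$ ranges over the unbounded set $\mathcal M _{\text{\tiny{$B$}}}^2$. This is precisely where Assumption \ref{assumption a priori estimates} is indispensable, either through compactness of $A$ or by removing the $\|\mu\|_2$-dependence from the upper bounds so that optimality produces a $\mu$-free energy estimate, and it is also where the affine and one-sided growth structure must be handled carefully to bound the measure-dependent drift in the setting of Assumption \ref{assumption for comparison}. By contrast, the passage from \eqref{eq moment a priori estimates} to $\underline\Gamma,\overline\Gamma\in\bH^2$ is soft, relying only on the directedness furnished by the monotonicity of $R$ and on Fatou's lemma.
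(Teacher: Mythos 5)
Your proposal is correct, and the first half coincides with the paper's argument essentially line by line: the compact case is dismissed as immediate, and under the coercivity condition the energy bound $\sup_\mu \mathbb{E}\int_0^T|\alpha^\mu_t|^2\,dt<\infty$ is obtained by comparing $\alpha^\mu$ with the constant control $a_0$, using $h\ge\kappa(|a|^2-1)$, $g\ge-\kappa$ on one side and the $\mu$-free upper bounds of Assumption \ref{assumption a priori estimates} on the reference trajectory on the other, before feeding the result back into the Gr\"onwall estimate for $X^\mu$.

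For the second half you take a mildly different, and in fact cleaner, route than the paper. The paper starts from an arbitrary countable family $(N^n)_n$ with $\overline\Gamma=\sup_n R(N^n)$, builds the monotone envelopes $N^{\wedge,n}=N^n\wedge\dots\wedge N^0$ and $N^{\vee,n}=N^n\vee\dots\vee N^0$, uses monotonicity of $R$ to sandwich $R(N^n)\wedge 0$ and $R(N^n)\vee 0$ between the monotone sequences $R(N^{\wedge,n})\wedge 0$ and $R(N^{\vee,n})\vee 0$, and only then applies Fatou together with \eqref{eq moment a priori estimates}; the splitting into $\wedge\,0$ and $\vee\,0$ parts is needed precisely because $\sup_n$ of a non-monotone vector sequence cannot be compared with $|\cdot|^2$ directly. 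You instead observe that monotonicity of $R$ makes the image family $\{R(M):M\in\bH^2\}$ downward (resp.\ upward) directed, so that $\underline\Gamma$ (resp.\ $\overline\Gamma$) is the $\mathbb P\otimes\pi$-a.e.\ limit of a \emph{decreasing} (resp.\ increasing) sequence $R(\tilde M_n)$ with $\tilde M_n=M_1\wedge\dots\wedge M_n$ taken \emph{inside} the family; since each $R(\tilde M_n)$ is itself an optimal trajectory, the uniform bound \eqref{eq moment a priori estimates} applies to it directly and a single application of Fatou finishes the proof, with no sign decomposition. Both arguments rest on the same two pillars (Proposition \ref{proposition best reply map increasing} and \eqref{eq moment a priori estimates}), so the difference is one of execution rather than of substance; your version buys brevity, while the paper's avoids invoking the directed-family characterization of essential extrema and works with the bare countable-realization property. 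One point to keep as careful as the paper does: the essential infimum of $\R^d$-valued processes is taken componentwise, so the countable realizing family should be chosen to work for all components simultaneously (harmless, since one may take the union over components before forming the finite minima).
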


In particular, it is worth underlining that the fact that $\underline{\Gamma}, \overline{\Gamma} \in \bH^2$ is a consequence of the monotonicity of the best reply map (see Proposition \ref{proposition best reply map increasing}).

\subsection{Main results: Existence and approximation of solutions}
We are now ready to discuss the main results of this paper. 

\subsubsection{Solutions to the MKV FBSDE}
Define inductively two sequences of processes as follows:
\begin{enumerate}
    \item Set $\underline X ^0 := \underline \Gamma$ and, for $n \geq 1$, define $(\underline X ^{n}, \underline Y ^{n}, \underline Z ^{n}, \underline Z ^{\circ,{n}})$ as the solution to the FBSDE
\begin{align*}
    & d \underline X _t ^{n} = \hat b ( t, \underline X _t^{n}, \mathbb P _{\text{\tiny{$\underline X ^{n-1}_t|B$}}} , \underline Y _t^{n} ) dt + \sigma (t, \underline X_t^{n}) d W_t + \sigma^\circ (t, \underline X_t^{n}) d B_t, \quad \underline X _0^{n} = \xi, \\  \notag
    & d \underline Y _t^{n} = - \hat h (t,\underline X _t^{n},\mathbb P _{\text{\tiny{$\underline X ^{n-1}_t|B$}}}, \underline Y _t^{n}) dt + \underline Z _t^{n} dW_t + \underline Z _t^{\circ,{n}} dB_t, \quad \underline Y _T^{n} = \hat g( \underline X _T^{n}, \mathbb P _{\text{\tiny{$\underline X ^{n-1}_T|B$}}}); \notag
\end{align*}
\item Set $\overline X ^0 := \overline \Gamma$ and, for $n \geq 1$, define $(\overline X ^{n}, \overline Y ^{n}, \overline Z ^{n}, \overline Z ^{\circ,{n}})$ as the solution to the FBSDE
\begin{align*}
    & d \overline X _t ^{n} = \hat b ( t, \overline X _t^{n}, \mathbb P _{\text{\tiny{$\overline X ^{n-1}_t|B$}}} , \overline Y _t^{n} ) dt + \sigma (t, \overline X_t^{n}) d W_t + \sigma^\circ (t, \overline X_t^{n}) d B_t, \quad \overline X _0^{n} = \xi, \\  \notag
    & d \overline Y _t^{n} = - \hat h (t,\overline X _t^{n},\mathbb P _{\text{\tiny{$\overline X ^{n-1}_t|B$}}}, \overline Y _t^{n}) dt + \overline Z _t^{n} dW_t + \overline Z _t^{\circ,{n}} dB_t, \quad \overline Y _T^{n} = \hat g( \overline X _T^{n}, \mathbb P _{\text{\tiny{$\overline X ^{n-1}_T|B$}}}). \notag 
\end{align*}
\end{enumerate}
In particular, notice that, in light of Lemma \ref{lemma existence of optimal controls} and of \eqref{eq definition best reply maps}, the sequences $(\underline X ^n)_n$ and $(\overline X ^n)_n$ are obtained by iterating the best reply map $R$; that is, 
\begin{equation}
\label{eq learning procedure X}
\underline X ^0 = \underline \Gamma, \ \underline  X ^{n+1} =  R (\underline X ^{n}), \ n \in \mathbb N
\quad \text{and} \quad 
\overline X ^0 = \overline \Gamma, \ \overline X ^{n+1} =  R (\overline X ^{n}), \ n \in \mathbb N.
\end{equation}

The convergence of the sequences  
$(\underline X ^{n}, \underline Y ^{n}, \underline Z ^{n}, \underline Z ^{\circ,{n}})_n, \,(\overline X ^{n}, , \overline Y ^{n}, \overline Z ^{n}, \overline Z ^{\circ,{n}})_n $ is subject to the following continuity condition. 
\begin{assumption}\label{assumption continuity in the measure}
  The functions $h(t, \cdot, \cdot, \cdot),\, D_x h(t, \cdot, \cdot, \cdot),\, g,$ and $ D_x g$ are jointly continuous in $(x,\mu,a)$ on $\R^d \times \mathcal P _2(\R^d) \times A$, where on $\mathcal P _2(\R^d)$ we consider the $\ell_{\R^d,2}$ Wasserstein distance (see \eqref{eq Wasserstain distance}).
\end{assumption}

\begin{remark}[Continuity of $\hat \alpha$]
    \label{remark continuity feedback in mu}
    Following the discussion before equation (3.57) at p.\ 209 in Vol.\ II of \cite{CarmonaDelarue18}, it is possible to show that, under Assumption \ref{assumption existence} and \ref{assumption continuity in the measure}, the feedback $\hat \alpha$ is continuous in $(x,\mu,y)$, for any $t \in [0,T]$.
\end{remark}

Introduce an order relation on the set of the solutions of the MKV FBSDE.  
Given two solutions $(X,Y,Z, Z^\circ)$ and $(\bar X, \bar Y, \bar Z, \bar Z ^\circ)$, if $X \leq \bar X$,  set $(X,Y,Z, Z^\circ) \leq (\bar X, \bar Y, \bar Z, \bar Z ^\circ)$.
With such an order relation, the set of solutions of the MKV FBSDE is a partially ordered set. 
 
The following theorem is the first main result of this paper and it is proved in Subsection \ref{subsection proof first main result}.  
\begin{theorem}\label{theorem main FBSDE} Under Assumptions \ref{assumption existence}, \ref{assumption uniqueness FBSDEs}, \ref{assumption a priori estimates} and either Assumption \ref{assumption for J} or \ref{assumption for comparison},  the following statements hold true:
 \begin{enumerate}
  \item\label{theorem.main.existence.FBSDE} The set of solutions to the MKV FBSDE \eqref{eq MKV FBSDE} is a nonempty complete lattice (compatible with the order relation $\leq$): 
  in particular, there exist minimal and maximal solutions $(\underline X, \underline Y, \underline Z, \underline Z ^\circ)$ and $(\overline X, \overline Y, \overline Z, 
 \overline Z ^\circ)$, respectively. 
  \item If the additional Assumption \ref{assumption continuity in the measure} holds, then: 
  \begin{enumerate}\label{theorem.main.general.convergence.FBSDE}
  \item\label{theorem.main.general.convergence.up.FBSDE} The sequence $(\underline{X}^n)_n$ is monotone increasing, it converges to $\underline X$, $\pi \otimes \mathbb P$-a.e., and 
  $$
\lim_n \E \bigg[ \sup_{ t \in [0,T]} (|\underline{X}_t^n - \underline X _t|^2 + |\underline{Y}_t^n - \underline Y _t|^2) 
    + \int_0^T ( | \underline{Z}_t^n - \underline Z _t |^2+ | \underline{Z} _t^{\circ,n} - \underline Z _t^\circ |^2 )dt \bigg] =0;
$$
  \item\label{theorem.main.generalconvergence.down.FBSDE} The sequence $(\overline {X}^n)_n$ is monotone decreasing, it converges to $\overline X$, $\pi \otimes \mathbb P$-a.e., and   
  $$
\lim_n \E \bigg[ \sup_{ t \in [0,T]} (|\overline {X}_t^n - \overline  X _t|^2 + |\overline {Y}_t^n - \overline  Y _t|^2) 
    + \int_0^T ( | \overline {Z}_t^n - \overline  Z _t |^2+ | \overline {Z} _t^{\circ,n} - \overline  Z _t^\circ |^2 )dt \bigg] =0. 
$$
\end{enumerate}
\end{enumerate}
\end{theorem}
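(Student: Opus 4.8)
The plan is to obtain the existence and lattice statement from Tarski's theorem, and the approximation statements from the monotonicity of the iterates combined with a decoupled stability argument for the backward equation. For the first part, I would work with the best reply map $R$ on $\bH^2$ and invoke Remark \ref{remark characterization of MFGE}, according to which $(X,Y,Z,Z^\circ)$ solves the MKV FBSDE \eqref{eq MKV FBSDE} if and only if $R(X)=X$, the correspondence being order preserving for the order induced by the forward component. Since any fixed point satisfies $X=R(X)$ and, by \eqref{eq definition Gamma upper lower bounds}, $\underline\Gamma\leq R(M)\leq\overline\Gamma$ for every $M\in\bH^2$, all fixed points lie in the order interval $L:=\{M\in\bH^2:\underline\Gamma\leq M\leq\overline\Gamma\}$. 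By Lemma \ref{lemma a priori estimates} the endpoints belong to $\bH^2$, so any family in $L$ admits an $\essinf$ and an $\esssup$ dominated by $|\underline\Gamma|\lor|\overline\Gamma|\in\bH^2$; hence $L$, equipped with the operations \eqref{eq definition land lor processes}, is a complete lattice. As $R$ maps $L$ into itself and is increasing by Proposition \ref{proposition best reply map increasing}, Tarski's fixed point theorem \cite{T} shows that the set of fixed points of $R$ is a nonempty complete lattice with least and greatest elements, which transports through the correspondence of Remark \ref{remark characterization of MFGE} into the claimed lattice structure and the minimal and maximal solutions.

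For the approximation statements I would first prove the monotonicity of the two sequences. Since $\underline X^0=\underline\Gamma=\essinf\{R(M):M\in\bH^2\}\leq R(\underline\Gamma)=\underline X^1$, an induction based on the monotonicity of $R$ gives that $(\underline X^n)_n$ is increasing, and symmetrically $\overline X^0=\overline\Gamma\geq R(\overline\Gamma)=\overline X^1$ gives that $(\overline X^n)_n$ is decreasing. Both sequences stay in $L$, hence are dominated by $|\underline\Gamma|\lor|\overline\Gamma|\in\bH^2$ and converge $\pi\otimes\P$-a.e.\ to limits $\underline X^\infty\leq\overline X^\infty$; by dominated convergence the convergence also holds in $L^2(\pi\otimes\P)$, which controls in particular the terminal values and the $dt$-integrals. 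Using \eqref{eq monotonicity of conditional distribution} together with the elementary bound $\E[\ell_{\R^d,2}(\P_{\zeta|B},\P_{\zeta'|B})^2]\leq\E|\zeta-\zeta'|^2$, the frozen conditional flows $\P_{\underline X^{n-1}_t|B}$ then converge to $\mu^\infty:=(\P_{\underline X^\infty_t|B})_t$ in the corresponding $L^2$-Wasserstein sense.

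Next I would identify the limit and upgrade the convergence by decoupling the system. Let $(\underline Y,\underline Z,\underline Z^\circ)$ be the unique solution of the Lipschitz BSDE with driver $\hat h(t,\underline X^\infty_t,\mu^\infty_t,\cdot)$ and terminal value $\hat g(\underline X^\infty_T,\mu^\infty_T)$. Under Assumption \ref{assumption continuity in the measure} and the continuity of $\hat\alpha$ recalled in Remark \ref{remark continuity feedback in mu}, the drivers and terminal data of the $n$-th backward equation converge in $L^2$ to those of this limiting BSDE, so classical $L^2$-stability for BSDEs yields the convergence of $(\underline Y^n,\underline Z^n,\underline Z^{\circ,n})$ to $(\underline Y,\underline Z,\underline Z^\circ)$ in the norm of \eqref{eq square integrability sol FBSDE}. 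Feeding this back into the forward equation, whose coefficients are Lipschitz in $x$ and whose drift arguments now all converge, a Gronwall and Burkholder--Davis--Gundy estimate upgrades the convergence of $\underline X^n$ to the $\sup_t$-$L^2$ norm and shows that $\underline X^\infty$ solves the forward SDE driven by $\underline Y$; hence $(\underline X^\infty,\underline Y,\underline Z,\underline Z^\circ)$ solves \eqref{eq MKV FBSDE}, i.e.\ $R(\underline X^\infty)=\underline X^\infty$. Minimality follows from monotonicity: for any solution $X^\ast$ one has $\underline X^0=\underline\Gamma\leq X^\ast$, and iterating $R$ gives $\underline X^n\leq X^\ast$ for all $n$, hence $\underline X^\infty\leq X^\ast$; this identifies $\underline X^\infty$ with the minimal solution $\underline X$. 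The decreasing sequence $(\overline X^n)_n$ is treated symmetrically and converges to the maximal solution $\overline X$.

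I expect the main obstacle to be the backward stability step, and specifically the passage to the limit in the measure argument: one has to ensure that the $L^2$-convergence of the forward iterates transfers, through the conditional laws, into convergence of the measure-dependent data $\hat h$ and $\hat g$ in the $2$-Wasserstein metric, so that the classical BSDE stability estimates apply. This is exactly where Assumption \ref{assumption continuity in the measure}, the continuity of the feedback $\hat\alpha$, and the fact that the forward convergence is available \emph{a priori} from the monotone structure --- which bypasses the generally unavailable stability theory for fully coupled FBSDEs --- all enter; the lack of continuity of the processes, handled via the reference measure $\pi$, will require some additional technical care.
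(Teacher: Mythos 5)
Your proposal is correct and follows essentially the same route as the paper's proof: Tarski's fixed point theorem applied to the increasing map $R$ on the order interval $L=\{M\in\bH^2 : \underline\Gamma\leq M\leq\overline\Gamma\}$, transported through Remark \ref{remark characterization of MFGE}, gives Claim \ref{theorem.main.existence.FBSDE}, while the convergence claims are obtained exactly as in Section \ref{section proof main results} by monotone iteration, domination by $\underline\Gamma,\overline\Gamma\in\bH^2$, BSDE stability for the backward components, SDE stability for the forward component, and minimality by iterating $R$ against any other solution. The only cosmetic difference is in the passage to the limit in the measure argument, where you invoke dominated convergence while the paper exploits the submodularity-induced monotonicity of the differences (e.g.\ of $D_xg(\underline X_T,\underline\mu^n_T)-D_xg(\underline X_T,\underline\mu_T)$) together with the monotone convergence theorem; both work thanks to the a priori bounds.
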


\subsubsection{MFG equilibria} 
We next interpret Theorem \ref{theorem main FBSDE} in terms of the MFG problem. 
Set
\begin{equation*}
\mu^{\rm Min} : = (  \mathbb P _{\text{\tiny{$\underline{\Gamma}_t |B$}}})_{t \in [0,T]}
\quad \text{and} \quad 
\mu^{\rm Max} : = (  \mathbb P _{\text{\tiny{$ \overline {\Gamma}_t |B$}}})_{t \in [0,T]},
\end{equation*}  
and define inductively two sequences of processes $(\underline \mu ^{n})_n, \,(\overline \mu ^{n})_n \subset \mathcal M _{\text{\tiny{$B$}}}^2$ as follows: 
\begin{equation}
    \label{eq learning procedure mu}
\underline \mu ^0 := \mu^{\rm Min}, \ \underline \mu ^{n+1} := \mathcal R (\underline \mu ^{n}),  \ n \in \mathbb N
\quad \text{and} \quad 
\overline \mu ^0 := \mu^{\rm Max}, \ \overline \mu ^{n+1} := \mathcal R (\overline \mu ^{n}), \ n \in \mathbb N.
\end{equation} 
Notice that $\underline \mu ^n =( \mathbb P _{\text{\tiny{$\underline X ^{n}_t|B$}}})_{t\in [0,T]}$ and $\overline \mu ^n =( \mathbb P _{\text{\tiny{$\overline X ^{n}_t|B$}}})_{t\in [0,T]}$.
In order to discuss the convergence of the sequences  $(\underline \mu ^{n})_n, \,(\overline \mu ^{n})_n$, introduce the set $\mathcal C ^d$ of continuous functions $\varphi: [0,T] \to \R^d$, equipped with the sup norm. 
On $\mathcal P_2 (\mathcal C^d)$, consider the $2$-Wasserstein distance $\ell_{\mathcal C ^d, 2}$ (see \eqref{eq Wasserstain distance}).

The following theorem is the second main result of this paper. 
Its proof is given in Subsection \ref{subsection proof second main result}.
\begin{theorem}\label{theorem main} Under Assumptions \ref{assumption existence}, \ref{assumption uniqueness FBSDEs}, \ref{assumption a priori estimates} and either Assumption \ref{assumption for J} or \ref{assumption for comparison},  the following statements hold true:
 \begin{enumerate}
  \item\label{theorem.main.existence} The set of MFG equilibria is a nonempty complete lattice (compatible with the order relation $\leq^{\text{\tiny{\rm st}}} $):  
  in particular, there exist minimal and maximal solutions $\underline \mu$ and $\overline \mu$, respectively. 
  \item If the additional Assumption \ref{assumption continuity in the measure} holds, then:
  \begin{enumerate}\label{theorem.main.general.convergence}
  \item\label{theorem.main.general.convergence.up} The sequence $(\underline {\mu}^n)_n$ is such that 
  $\underline {\mu}^{n} \leq^{\text{\tiny{\rm st}}} \underline {\mu}^{n+1}$ for any $n \in \mathbb N$ and 
  $$
  \lim_n \ell_{\mathcal C ^d, 2} (\underline {\mu}^n, \underline{\mu}) =0, \ \text{ $\mathbb P$-a.s.,} \quad  \text{and} \quad \lim_n \mathbb E [ \ell_{\mathcal C ^d, 2}^2 (\underline {\mu}^n, \underline {\mu})] =0;  
  $$
  \item\label{theorem.main.generalconvergence.down} The sequence $(\overline {\mu}^n)_n$ is such that 
  $\overline {\mu}^{n+1} \leq^{\text{\tiny{\rm st}}} \overline {\mu}^{n}$ for any $n \in \mathbb N$ and 
  $$
  \lim_n \ell_{\mathcal C ^d, 2} (\overline {\mu}^n, \overline{\mu}) =0, \ \text{ $\mathbb P$-a.s.,} \quad  \text{and} \quad \lim_n \mathbb E [ \ell_{\mathcal C ^d, 2}^2 (\overline {\mu}^n, \overline{\mu})] =0.
  $$
 \end{enumerate}
  \end{enumerate}
\end{theorem}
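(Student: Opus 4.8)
The plan is to transport Theorem~\ref{theorem main FBSDE} to the MFG setting through the dictionary recorded in Remark~\ref{remark characterization of MFGE}, in which $p$ is shown to be a bijection from the fixed points of $R$ (equivalently, from the forward components of the solutions to the MKV FBSDE~\eqref{eq MKV FBSDE}) onto the MFG equilibria, with inverse $\Gamma$. The first step is to upgrade this bijection to an \emph{order isomorphism} between the solution set of \eqref{eq MKV FBSDE}, ordered by $X \leq \bar X$, and the set of MFG equilibria, ordered by $\leq^{\text{\tiny{\rm st}}}$. Indeed, a solution is determined by its forward component by Assumption~\ref{assumption uniqueness FBSDEs}, so this order coincides with the order $\leq$ on the fixed points of $R$; moreover $p$ is increasing by \eqref{eq projection monotone}, while its inverse $\Gamma$ is increasing by Proposition~\ref{proposition best reply map increasing}\eqref{proposition best reply map increasing: claim Gamma}. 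Since $p$ and $\Gamma$ are mutually inverse order-preserving maps, one has $X \leq \bar X \iff p(X) \leq^{\text{\tiny{\rm st}}} p(\bar X)$, which is precisely the order-isomorphism property.

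For part~\eqref{theorem.main.existence}, I would simply invoke Theorem~\ref{theorem main FBSDE}\eqref{theorem.main.existence.FBSDE}: the solution set of \eqref{eq MKV FBSDE} is a nonempty complete lattice with minimal and maximal elements built on $\underline X$ and $\overline X$. As the complete-lattice property is preserved under order isomorphism, the image under $p$ is again a nonempty complete lattice for $\leq^{\text{\tiny{\rm st}}}$, and the extremal equilibria are $\underline \mu := p(\underline X)$ and $\overline \mu := p(\overline X)$.

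For part~\eqref{theorem.main.general.convergence}, I start from the identity $\underline \mu^n = p(\underline X^n)$ and $\overline \mu^n = p(\overline X^n)$ noted below \eqref{eq learning procedure mu}. The monotonicity statements $\underline \mu^n \leq^{\text{\tiny{\rm st}}} \underline \mu^{n+1}$ and $\overline \mu^{n+1} \leq^{\text{\tiny{\rm st}}} \overline \mu^{n}$ follow at once from the monotonicity of $(\underline X^n)_n$, $(\overline X^n)_n$ in Theorem~\ref{theorem main FBSDE}\eqref{theorem.main.general.convergence.FBSDE} and the monotonicity \eqref{eq projection monotone} of $p$. For the convergence, I identify $\underline \mu^n,\underline \mu$ with the conditional laws of the continuous trajectories $\underline X^n,\underline X$ given $\mathcal F_T^{\text{\tiny{$B$}}}$, viewed in $\mathcal P_2(\mathcal C^d)$, and use that the conditional joint law of $(\underline X^n,\underline X)$ is an admissible coupling, so that
\[
\ell_{\mathcal C ^d, 2}^2(\underline \mu^n, \underline \mu) \leq \mathbb E\Big[ \sup_{t \in [0,T]} |\underline X^n_t - \underline X_t|^2 \,\Big|\, \mathcal F_T^{\text{\tiny{$B$}}} \Big] =: \mathbb E[\xi_n \mid \mathcal F_T^{\text{\tiny{$B$}}}].
\]
Taking expectations and using the strong convergence $\mathbb E[\xi_n] \to 0$ from Theorem~\ref{theorem main FBSDE}\eqref{theorem.main.general.convergence.up.FBSDE} gives the $L^2$ statement $\mathbb E[\ell_{\mathcal C ^d, 2}^2(\underline \mu^n,\underline \mu)] \to 0$. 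For the $\mathbb P$-a.s.\ statement I exploit monotonicity once more: since $\underline X^n \uparrow \underline X$ componentwise, $\xi_n$ is nonincreasing in $n$, hence so is the nonnegative sequence $\mathbb E[\xi_n \mid \mathcal F_T^{\text{\tiny{$B$}}}]$, whose expectation vanishes; by monotone convergence it tends to $0$ $\mathbb P$-a.s., and the displayed bound transfers this to $\ell_{\mathcal C ^d, 2}(\underline \mu^n,\underline \mu)$. The decreasing sequence $\overline X^n \downarrow \overline X$ is handled symmetrically using Theorem~\ref{theorem main FBSDE}\eqref{theorem.main.generalconvergence.down.FBSDE}.

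I expect the main obstacle to be the passage from the pathwise and $\bH^2$-convergence provided by Theorem~\ref{theorem main FBSDE} to convergence of the conditional \emph{path}-laws in $\mathcal P_2(\mathcal C^d)$. Concretely, one must first justify that the conditional law of the continuous trajectory given $\mathcal F_T^{\text{\tiny{$B$}}}$ is a well-defined and suitably measurable element of $\mathcal P_2(\mathcal C^d)$ whose marginal flow is $\underline \mu^n$ (this rests on the conditional-law machinery already invoked around Remark~1 of \cite{tchuendom}), and then establish the conditional coupling inequality above with the required measurability in $\omega$. Once these two points are in place, the order-isomorphism transfer of part~\eqref{theorem.main.existence} and the monotone-convergence upgrade to almost-sure convergence are routine consequences of Theorem~\ref{theorem main FBSDE}.
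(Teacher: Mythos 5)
Your proposal is correct and follows essentially the same route as the paper: part (1) is proved there by transferring the complete-lattice structure of the MKV FBSDE solution set through the mutually inverse monotone maps $p$ and $\Gamma$ (the paper writes this concretely as $\mu \land \bar\mu := p(\Gamma(\mu)\land^{F}\Gamma(\bar\mu))$, which is exactly your order-isomorphism argument), and part (2) rests on the same conditional coupling bound $\ell_{\mathcal C^d,2}^2(\underline\mu^n,\underline\mu) \leq \mathbb E\big[\sup_{t\in[0,T]}|\underline X^n_t-\underline X_t|^2 \,\big|\, B\big]$ together with monotonicity and (conditional) monotone convergence. The only cosmetic difference is that the paper first establishes $\mathbb P$-a.s.\ convergence of $\sup_t|\underline X^n_t-\underline X_t|^2$ and then passes to conditional expectations, whereas you exploit monotonicity of the conditional expectations directly; both arguments are valid.
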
 

\subsubsection{Convergence of the fictitious play} 
We now study the convergence of the learning procedure introduced (in the context of MFGs) in \cite{CardaliaguetHadikhanloo17}. 
Take $\hat X ^1 \in \mathbb H^2, \ \hat \mu ^{1}= ( \mathbb P _{\text{\tiny{$\hat {X}_t^1 |B$}}})_{t\in [0,T]}$ and, for $n > 1$, define $(\hat X ^{n}, \hat Y ^{n}, \hat Z ^{n}, \hat Z ^{\circ,{n}})$ as the solution to the FBSDE
\begin{align*}
    & d \hat X _t ^{n} = \hat b ( t, \hat X _t^{n}, \hat \mu _t^{n-1} , \hat Y _t^{n} ) dt + \sigma (t, \hat X_t^{n}) d W_t + \sigma^\circ (t, \hat X_t^{n}) d B_t, \quad \hat X _0^{n} = \xi, \\  \notag
    & d \hat Y _t^{n} = - \hat h (t,\hat X _t^{n},\hat \mu _t^{n-1}, \hat Y _t^{n}) dt + \hat Z _t^{n} dW_t + \hat Z _t^{\circ,{n}} dB_t, \quad \hat Y _T^{n} = \hat g( \hat X _T^{n}, \hat \mu _T^{n-1}), \notag
\end{align*}   
where $\hat \mu^n$ is defined by  
$$
\hat \mu _t^{n} : = \frac{1}{n} \sum_{k=1}^n \mathbb P _{\text{\tiny{$\hat {X}_t^k |B$}}}. 
$$
Notice that, in light of Lemma \ref{lemma existence of optimal controls} and of \eqref{eq definition best reply maps}, the sequences $(\hat X ^n)_n$ and $(\hat \mu ^n)_n$ ca be written in terms of the best reply maps $\Gamma$ and $\mathcal R$; that is, 
\begin{equation}
\label{eq fictitious play}
\hat X ^{n+1} =  \Gamma (\hat \mu ^{n}), \ n \in \mathbb N _0
\quad \text{and} \quad 
\hat \mu ^{n+1} =  \frac{1}{n+1} (\mathcal R (\hat \mu^n) + n \hat \mu^n), \ n \in \mathbb N _0.
\end{equation}
The sequence $(\hat \mu ^n)_n$ is referred to as fictitious play. 
 
With  $(\underline X, \underline Y, \underline Z, \underline Z ^\circ)$ and $\underline \mu$ as in Theorems \ref{theorem main FBSDE} and \ref{theorem main}, we have the following convergence result. 
Its proof follows arguments similar to those in the proof of Theorems \ref{theorem main FBSDE} and \ref{theorem main} (see Section \ref{section proof main results}), and it is therefore only sketched in Subsection \ref{subsection proof convergence fictitious play}. 

\begin{theorem}\label{theorem main fictitious play} 
Under Assumptions \ref{assumption existence}, \ref{assumption uniqueness FBSDEs}, \ref{assumption a priori estimates}, \ref{assumption continuity in the measure} and either Assumption \ref{assumption for J} or \ref{assumption for comparison}, if $\hat X ^1 =\underline \Gamma$, then the following statements hold true:
 \begin{enumerate}
  \item\label{theorem main fictitious play FBSDE} The sequence $(\hat{X}^n)_n$ is monotone increasing, it converges to $\underline X$, $\pi \otimes \mathbb P$-a.e., and 
  $$
\lim_n \E \bigg[ \sup_{ t \in [0,T]} (|\hat{X}_t^n - \underline X _t|^2 + |\hat{Y}_t^n - \underline Y _t|^2) 
    + \int_0^T ( | \hat{Z}_t^n - \underline Z _t |^2+ | \hat{Z} _t^{\circ,n} - \underline Z _t^\circ |^2 )dt \bigg] =0;
$$
  \item\label{theorem main fictitious play measures} The sequence $(\hat {\mu}^n)_n$ is such that 
  $\hat {\mu}^{n} \leq^{\text{\tiny{\rm st}}} \hat {\mu}^{n+1}$ for any $n \in \mathbb N _0$ and 
  $$ 
  \lim_n \ell_{\mathcal C ^d, 2} (\hat {\mu}^n, \underline{\mu}) =0, \ \text{ $\mathbb P$-a.s.,} \quad  \text{and} \quad \lim_n \mathbb E [ \ell_{\mathcal C ^d, 2}^2 (\hat {\mu}^n, \underline {\mu})] =0. 
  $$
\end{enumerate}
\end{theorem}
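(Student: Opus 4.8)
The plan is to mirror the proofs of Theorems \ref{theorem main FBSDE} and \ref{theorem main}; the only genuinely new ingredient is the monotonicity of the Cesàro-averaged sequence $(\hat \mu^n)_n$, after which the monotone-limit-plus-BSDE-stability scheme carries over with minimal changes. Throughout, the engine is the monotonicity of the best reply maps (Proposition \ref{proposition best reply map increasing}), the a priori bound \eqref{eq moment a priori estimates}, and the continuity Assumption \ref{assumption continuity in the measure}. First I would prove, by induction on $n$, the coupled relations $\hat \mu^n \leq^{\text{\tiny{\rm st}}} \mathcal R(\hat \mu^n)$ and $\hat \mu^n \leq^{\text{\tiny{\rm st}}} \hat \mu^{n+1}$. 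For the base case, $\hat X^1 = \underline \Gamma$ and $R = \Gamma \circ p$ give $\hat \mu^1 = p(\underline \Gamma)$ and $\mathcal R(\hat \mu^1) = p(R(\underline \Gamma))$; since $\underline \Gamma \leq R(\underline \Gamma)$ by \eqref{eq definition Gamma upper lower bounds}, the monotonicity \eqref{eq projection monotone} of $p$ yields $\hat \mu^1 \leq^{\text{\tiny{\rm st}}} \mathcal R(\hat \mu^1)$. For the step, I would use two elementary facts: that $\leq^{\text{\tiny{\rm st}}}$ is preserved under mixtures (integrating a nondecreasing test function against a convex combination is a convex combination of the integrals), and that $\mathcal R$ is increasing. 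Writing $\hat \mu^{n+1} = \tfrac{n}{n+1}\hat \mu^n + \tfrac{1}{n+1}\mathcal R(\hat \mu^n)$ and inserting $\hat \mu^n \leq^{\text{\tiny{\rm st}}} \mathcal R(\hat \mu^n)$ gives $\hat \mu^n \leq^{\text{\tiny{\rm st}}} \hat \mu^{n+1} \leq^{\text{\tiny{\rm st}}} \mathcal R(\hat \mu^n)$, and monotonicity of $\mathcal R$ (applied to $\hat \mu^n \leq^{\text{\tiny{\rm st}}} \hat \mu^{n+1}$) upgrades this to $\hat \mu^{n+1} \leq^{\text{\tiny{\rm st}}} \mathcal R(\hat \mu^n) \leq^{\text{\tiny{\rm st}}} \mathcal R(\hat \mu^{n+1})$, closing the induction. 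Monotonicity of the states, $\hat X^n \leq \hat X^{n+1}$, then follows from $\hat X^{n+1} = \Gamma(\hat \mu^n)$ and the monotonicity of $\Gamma$, the base step being $\hat X^1 = \underline \Gamma \leq R(\underline \Gamma) = \hat X^2$.

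Next I would pass to the limit. Applying \eqref{eq moment a priori estimates} to $\hat X^{n+1} = \Gamma(\hat \mu^n) = X^{\hat \mu^n}$ (note $\hat \mu^n \in \mathcal M _{\text{\tiny{$B$}}}^2$, being a convex combination of such flows) bounds $\sup_n \mathbb E[\sup_t |\hat X^n_t|^2]$, so the increasing sequence $(\hat X^n)_n$ converges $\mathbb P \otimes \pi$-a.e.\ and in $L^2(\mathbb P \otimes \pi)$ to some $\hat X \in \mathbb H^2$. The point specific to fictitious play is the treatment of the average: from $\hat X^k \to \hat X$ one gets $\ell_{\R^d,2}^2(\mathbb P _{\text{\tiny{$\hat X^k_t|B$}}}, \mathbb P _{\text{\tiny{$\hat X_t|B$}}}) \to 0$, and the joint convexity of the squared $2$-Wasserstein distance gives $\ell_{\R^d,2}^2(\hat \mu^n_t, \mathbb P _{\text{\tiny{$\hat X_t|B$}}}) \leq \tfrac{1}{n}\sum_{k=1}^n \ell_{\R^d,2}^2(\mathbb P _{\text{\tiny{$\hat X^k_t|B$}}}, \mathbb P _{\text{\tiny{$\hat X_t|B$}}})$, whose right-hand side vanishes as a Cesàro mean of a null sequence; hence $\hat \mu^{n} \to \hat \mu := p(\hat X)$. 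With the forward convergence and $\hat \mu^{n-1} \to \hat \mu$ secured, Assumption \ref{assumption continuity in the measure} lets me pass to the limit in $\hat b, \hat h, \hat g$, and the stability estimates for BSDEs give the convergence of $(\hat Y^n, \hat Z^n, \hat Z^{\circ,n})$ in the norm \eqref{eq square integrability sol FBSDE} and identify $(\hat X, \hat Y, \hat Z, \hat Z^\circ)$ as a solution of the MKV FBSDE \eqref{eq MKV FBSDE}, exactly as in the proof of Theorem \ref{theorem main FBSDE}.

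It remains to identify $\hat X$ with the minimal solution and to prove the measure convergence. For any solution $X'$ of \eqref{eq MKV FBSDE}, with $\mu' := p(X') = \mathcal R(\mu')$, I would show $\hat \mu^n \leq^{\text{\tiny{\rm st}}} \mu'$ for all $n$ by induction: the base case is $\hat \mu^1 = p(\underline \Gamma) \leq^{\text{\tiny{\rm st}}} p(X') = \mu'$ (since $\underline \Gamma \leq X'$), and the step combines $\mathcal R(\hat \mu^n) \leq^{\text{\tiny{\rm st}}} \mathcal R(\mu') = \mu'$ with the mixture structure of $\hat \mu^{n+1}$. Letting $n \to \infty$ gives $\hat \mu \leq^{\text{\tiny{\rm st}}} \mu'$, hence $\hat X = \Gamma(\hat \mu) \leq \Gamma(\mu') = X'$, so that $\hat X = \underline X$ and, by uniqueness of the backward component, $(\hat Y, \hat Z, \hat Z^\circ) = (\underline Y, \underline Z, \underline Z^\circ)$. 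Finally, $\mathbb E[\sup_t |\hat X^n_t - \underline X_t|^2] \to 0$ together with $\ell_{\mathcal C^d,2}^2(\hat \mu^n, \underline \mu) \leq \tfrac{1}{n}\sum_{k=1}^n \mathbb E\big[\sup_t |\hat X^k_t - \underline X_t|^2 \mid \mathcal F_T^{\text{\tiny{$B$}}}\big]$ yields the convergence of $(\hat \mu^n)_n$ to $\underline \mu$ in the two senses of Theorem \ref{theorem main}.

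I expect the main obstacle to be precisely the monotonicity of the fictitious play, i.e.\ the coupled induction yielding $\hat \mu^n \leq^{\text{\tiny{\rm st}}} \mathcal R(\hat \mu^n)$: the averaging destroys the naive iteration $\hat X^{n+1} = R(\hat X^n)$ used for Theorem \ref{theorem main FBSDE}, so monotonicity can no longer be read off directly from $R$ and must instead be extracted by pairing the order-preservation of $\leq^{\text{\tiny{\rm st}}}$ under mixtures with the increasing property of $\mathcal R$. The secondary technical care lies in handling the Cesàro mean at the level of (conditional, path-space) Wasserstein distances, which is where the convexity bound above replaces the plain telescoping used in the non-averaged iteration.
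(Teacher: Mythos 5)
Your proposal is correct and takes essentially the same route as the paper's (sketched) proof: an induction combining the mixture structure of the fictitious play with the monotonicity of the best reply maps, then monotone limits, identification of the Cesàro limit, and the stability/minimality arguments recycled from Theorems \ref{theorem main FBSDE} and \ref{theorem main}. The only differences are cosmetic: the paper inducts directly on $\hat\mu^{n-1} \leq^{\text{\tiny{\rm st}}} \hat\mu^{n}$ and identifies the limit of the averages via weak convergence of Cesàro means, while you carry the auxiliary invariant $\hat\mu^n \leq^{\text{\tiny{\rm st}}} \mathcal R(\hat\mu^n)$ and use convexity of the squared Wasserstein distance, which is equally sound.
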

Clearly, analogous convergence results to the maximal solutions  $(\overline X, \overline Y, \overline Z, \overline Z ^\circ)$ and $\overline \mu$ hold if $\hat X ^0 =\overline \Gamma$. 

\subsubsection{Some remarks} 
This subsection collects some observations on Theorems \ref{theorem main FBSDE}, \ref{theorem main} and \ref{theorem main fictitious play}.

\begin{remark}[Discontinuity in the measure $\mu$]
It is worth observing that Claim \ref{theorem.main.existence.FBSDE} in Theorem \ref{theorem main FBSDE} establishes the existence of minimal and maximal strong solutions to the conditional MKV FBSDE \eqref{eq MKV FBSDE} through a lattice theoretical fixed point theorem (see Subsection \ref{subsection proof first main result}).
This approach is very different from the ones already used in the literature. 
In particular, it does not require the time horizon to be small and the Lipschitz continuity in the measure flow (see \cite{CarmonaDelarue18}), or the use of the --so called-- monotonicity conditions (see \cite{ahuja2016wellposedness,ahuja.ren.yang.2019, huang.tang.2021}). 
\end{remark} 

\begin{remark}[Explicit initialization of the algorithms] 
The initialization  of the sequences in \eqref{eq learning procedure X}, \eqref{eq learning procedure mu} and in \eqref{eq fictitious play} can be made more explicit if the control set $A$ is assumed to be compact. 
 
For example, by the monotonicity of the best reply maps, the conclusions of Claim \ref{theorem.main.general.convergence.up.FBSDE} in Theorem \ref{theorem main FBSDE} can be obtain for any sequence $(M^n)_n$ of type 
$M^0 \in \mathbb H ^2$, $ M^{n+1} := R(M^n)$ for $ n \in \mathbb N$,
whenever $M^0$ verifies $M ^0 \leq \underline \Gamma$. 
In particular, when the assumptions of Theorem \ref{theorem main FBSDE} are satisfied and the control set $A$ is compact, one can choose $M^0$ as the solution of the SDE 
\begin{equation*}
dM_t^0= (\tilde{b}_1(t,M_t^0) +  \tilde{b}_2(t))dt + \sigma(t,M_t^0) dW_t + \sigma^{\circ} (t,M_t^0) dB_t,  \quad t \in [0,T], \quad M_0^0=\xi,  
\end{equation*}
where 
$$
\text{$\tilde{b}_1(t,x) := \inf_{\mu \in \mathcal  M _{\text{\tiny{$B$}}}^2 } b_1(t,x,\mu_t) \quad $and$ \quad \tilde{b}_2(t) := \inf_{a \in A} b_2(t)a$.}
$$
By the comparison principle for SDEs (see, e.g., \cite{Protter05}), one obtains $M^0 \leq X^{\alpha,\mu}$, for any $\alpha \in \mathcal A$ and $\mu \in \mathcal  M _{\text{\tiny{$B$}}}^2$, where $X^{\alpha,\mu}$ denotes the solution of \eqref{eq controlled SDE}. 
Hence, by the definition of $\underline \Gamma$ in \eqref{eq definition Gamma upper lower bounds}, one has $M^0 \leq  \underline \Gamma$,  so that the convergence of $M^n$ to $\underline X$ holds. 

Obviously, analogous considerations can be made also for the initialization of the sequences $(\overline X ^n)_n$ and $\hat X ^n$. 
\end{remark} 

\begin{remark} 
    \label{remark multidimensional}
In comparison with \cite{dianetti.ferrari.fischer.nendel.2019}, the approach used in this paper allows to treat multidimensional settings.  
In particular,  the fixed point problem in \cite{dianetti.ferrari.fischer.nendel.2019} was formulated on the lattice related to the partially ordered set $(\mathcal P (\R), \leq^{\text{\tiny{\rm st}}})$.
However, although the first order stochastic dominance induces a lattice structure on $\mathcal{P}(\mathbb{R})$, it does not induce a lattice order on $\mathcal{P}(\mathbb{R}^d)$ for $d>1$ (cf.\ \cite{kamae&obrien} and \cite{muller&scarsini}).
Thus, Tarski's fixed point theorem (see \cite{T}) can not be employed on the set $(\mathcal P (\R^d), \leq^{\text{\tiny{\rm st}}})$.
This technical limitation is overcome in the present work by formulating the fixed point problem on the lattice of processes related to the partially ordered set $(\mathbb H^2, \leq )$.
Indeed,  while the set $\mathcal P (\R^d)$ has good compactness properties
and is a natural choice in order to use topological fixed point theorems, the set $\mathbb H ^2$ has a more natural lattice structure and represents a better choice when using Tarski's fixed point theorem. 
\end{remark}

\section{Proofs of Proposition \ref{proposition best reply map increasing} and of Lemma \ref{lemma a priori estimates}}
    \label{section proof auxiliary results}
\subsection{Proof of Proposition \ref{proposition best reply map increasing} under Assumption \ref{assumption for J}}
    \label{subsection proof proposition best reply map increasing: cost functional}
We recall that under Assumption \ref{assumption for J} the drift $b_1$ does not depend on the measure argument $\nu$.  
In this subsection, for $\alpha \in \mathcal A$, we denote by
$X^\alpha$ the solution to the SDE$_\alpha$ \eqref{eq controlled SDE}. 

\subsubsection{A preliminary lemma}  
We first prove that the set of controlled trajectories is a lattice. 
\begin{lemma}\label{lemma lattice of trajectories} 
For $\alpha, \, \bar{\alpha} \in \mathcal A$, the controls $\alpha^\land, \alpha^\lor \in \mathcal A$ defined by 
\begin{equation*}    
\alpha_t^{i,\land} := \alpha_t^i \mathds{1}_{ \{ X_t^{i,\alpha} < X^{i,\bar{\alpha}}_t \}} + \bar{\alpha}_t^i \mathds{1}_{ \{ X^{i,{\alpha}}_t \geq  X^{i,\bar{\alpha}}_t \}} 
\quad  \text{and} \quad 
\alpha_t^{i,\lor} := \bar{\alpha}_t^i \mathds{1}_{ \{ X^{i,{\alpha}}_t < X^{i,\bar{\alpha}}_t \}} + {\alpha}_t^i \mathds{1}_{ \{ X^{i,{\alpha}}_t \geq X^{i,\bar{\alpha}}_t \}},  
\end{equation*} 
are such that $X^\alpha \land X^{\bar{\alpha}} = X^{\alpha^\land}$ and $X^\alpha \lor X^{\bar{\alpha}} = X^{\alpha^\lor}$.  
\end{lemma}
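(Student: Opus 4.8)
The plan is to exploit the separable structure imposed in Condition~\ref{assumption structural for J} of Assumption~\ref{assumption for J}, which makes the $d$ coordinates of the state decouple, and then to treat each coordinate by a one-dimensional It\^o--Tanaka argument. Writing $X^{i,\alpha}$ for the $i$-th component, separability gives
\begin{equation*}
dX_t^{i,\alpha} = \bigl(b_1^i(t,X_t^{i,\alpha}) + b_2^i(t)\alpha_t^i\bigr)\,dt + \sigma^i(t,X_t^{i,\alpha})\,dW_t + \sigma^{i,\circ}(t,X_t^{i,\alpha})\,dB_t,
\end{equation*}
so that each $X^{i,\alpha}$ is a scalar It\^o process whose coefficients see only $x^i$ and $a^i$. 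Admissibility of $\alpha^\land,\alpha^\lor$ is immediate: the switching indicators are $\mathbb F$-progressively measurable because $X^\alpha,X^{\bar\alpha}$ are; one has $|\alpha^{i,\land}|\le|\alpha^i|+|\bar\alpha^i|$, hence square integrability; and since $A=A^1\times\cdots\times A^d$ is a product of intervals, selecting each coordinate from $\{\alpha_t^i,\bar\alpha_t^i\}\subset A^i$ keeps the mixed control $A$-valued.

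The heart of the matter is to control the difference $\zeta^i:=X^{i,\alpha}-X^{i,\bar\alpha}$ near zero. I would first record that its martingale part has quadratic-variation density $\rho_s=\bigl(\sigma^i(s,X_s^{i,\alpha})-\sigma^i(s,X_s^{i,\bar\alpha})\bigr)^2+\bigl(\sigma^{i,\circ}(s,X_s^{i,\alpha})-\sigma^{i,\circ}(s,X_s^{i,\bar\alpha})\bigr)^2$, and that the Lipschitz bounds in Condition~\ref{assumption existence: Lipschitzianity data SDE} of Assumption~\ref{assumption existence} give $\rho_s\le C\,(\zeta_s^i)^2$. Inserting this into the occupation-density representation of the (one- and two-sided) local times, $L_t^{0}(\zeta^i)=\lim_{\eps\downarrow0}\eps^{-1}\int_0^t\mathds 1_{\{0\le\zeta_s^i<\eps\}}\,d\langle\zeta^i\rangle_s$ and its symmetric analogue, yields a bound of order $C\eps t$, so the local times of $\zeta^i$ at $0$ vanish. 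This is precisely where the hypothesis that $\sigma,\sigma^\circ$ depend only on the state (so that they coincide when the two trajectories meet) is used, and it is the main obstacle.

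With $L^0(\zeta^i)=0$ I would apply Tanaka's formula to $X_t^{i,\alpha}\land X_t^{i,\bar\alpha}=\tfrac12(X_t^{i,\alpha}+X_t^{i,\bar\alpha})-\tfrac12|\zeta_t^i|$ to obtain
\begin{equation*}
d\bigl(X_t^{i,\alpha}\land X_t^{i,\bar\alpha}\bigr) = \mathds 1_{\{X_t^{i,\alpha}\le X_t^{i,\bar\alpha}\}}\,dX_t^{i,\alpha} + \mathds 1_{\{X_t^{i,\alpha}> X_t^{i,\bar\alpha}\}}\,dX_t^{i,\bar\alpha}.
\end{equation*}
On each event the running minimum equals the corresponding trajectory, so $b_1^i,\sigma^i,\sigma^{i,\circ}$ may be evaluated at $X^{i,\alpha}\land X^{i,\bar\alpha}$ throughout, and the drift reads $b_1^i(t,X^{i,\alpha}\land X^{i,\bar\alpha})+b_2^i(t)\tilde\alpha_t^i$ with $\tilde\alpha^i:=\alpha^i\mathds 1_{\{X^{i,\alpha}\le X^{i,\bar\alpha}\}}+\bar\alpha^i\mathds 1_{\{X^{i,\alpha}>X^{i,\bar\alpha}\}}$. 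This $\tilde\alpha^i$ agrees with the prescribed $\alpha^{i,\land}$ except on the coincidence set $\{X^{i,\alpha}=X^{i,\bar\alpha}\}$.

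The final, delicate step is to show that this discrepancy is harmless. I would argue that $\int_0^t\mathds 1_{\{\zeta_s^i=0\}}\,d\zeta_s^i=\tfrac12\bigl(L_t^{0+}(\zeta^i)-L_t^{0-}(\zeta^i)\bigr)=0$ by the jump relation for local times in the spatial variable; since the martingale part of this integral has quadratic variation $\int_0^t\mathds 1_{\{\zeta_s^i=0\}}\rho_s\,ds=0$, and $b_1^i(s,X_s^{i,\alpha})=b_1^i(s,X_s^{i,\bar\alpha})$ on $\{\zeta_s^i=0\}$, one is left with $\int_0^t\mathds 1_{\{\zeta_s^i=0\}}b_2^i(s)(\alpha_s^i-\bar\alpha_s^i)\,ds=0$. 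Hence replacing $\tilde\alpha^i$ by $\alpha^{i,\land}$ does not change the drift, so $X^{i,\alpha}\land X^{i,\bar\alpha}$ solves the state equation driven by $\alpha^{i,\land}$, and strong uniqueness for this Lipschitz SDE gives $X^\alpha\land X^{\bar\alpha}=X^{\alpha^\land}$. The identity $X^\alpha\lor X^{\bar\alpha}=X^{\alpha^\lor}$ follows verbatim using $a\lor b=\tfrac12(a+b)+\tfrac12|a-b|$. I expect the vanishing of the local time, together with the coincidence-set bookkeeping, to be the only genuinely non-routine parts.
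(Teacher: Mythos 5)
Your proof is correct and rests on the same two pillars as the paper's: the vanishing of the local time at zero of $\zeta^i=X^{i,\alpha}-X^{i,\bar\alpha}$ (via the Lipschitz bounds on $\sigma^i,\sigma^{i,\circ}$, which depend only on $x^i$ and not on the control), followed by a Tanaka/Meyer--It\^o identification of the minimum process as the solution of the SDE driven by $\alpha^\land$, with strong uniqueness closing the argument. The only place you diverge is in the choice of decomposition, and it costs you an extra step that the paper never needs. The paper writes $X^i\land\bar X^i = X^i + 0\land(\bar X^i - X^i)$ and applies Meyer--It\^o with the indicator $\mathds{1}_{\{\bar X^i - X^i\le 0\}}$, which produces exactly the split $\mathds{1}_{\{X^i<\bar X^i\}}\,dX^i + \mathds{1}_{\{X^i\ge\bar X^i\}}\,d\bar X^i$; this agrees with the definition of $\alpha^{i,\land}$ (which selects $\bar\alpha^i$ on the coincidence set), so no bookkeeping on $\{X^{i,\alpha}=X^{i,\bar\alpha}\}$ is required. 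Your symmetric decomposition $\tfrac12(X^i+\bar X^i)-\tfrac12|\zeta^i|$ with the convention $\mathrm{sgn}(0)=-1$ assigns $\alpha^i$ rather than $\bar\alpha^i$ on the coincidence set, which forces your final step. That step is sound: $\int_0^t\mathds{1}_{\{\zeta^i_s=0\}}\,d\zeta^i_s=\tfrac12\bigl(L^{0}_t(\zeta^i)-L^{0-}_t(\zeta^i)\bigr)=0$ since both one-sided local times vanish by the same Lipschitz estimate, the martingale contribution is null because $\int_0^t\mathds{1}_{\{\zeta^i_s=0\}}\,d\langle\zeta^i\rangle_s=0$, and $b_1^i$ cancels on the coincidence set, leaving $\int_0^t\mathds{1}_{\{\zeta^i_s=0\}}b_2^i(s)(\alpha^i_s-\bar\alpha^i_s)\,ds=0$, which is precisely what is needed to swap $\tilde\alpha^i$ for $\alpha^{i,\land}$. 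Had you taken the opposite Tanaka convention $\mathrm{sgn}(0)=+1$ (equivalently, worked with the left local time), the indicator split would have matched the definition of $\alpha^\land$ on the nose and the coincidence-set argument would have been unnecessary; this is, in effect, what the paper's choice of decomposition achieves.
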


\begin{proof}
Take $\alpha, \, \bar{\alpha} \in \mathcal A$ and set $X:= X^\alpha, \, \bar X := X^{\bar{\alpha}}$. 
The controls $\alpha^\land, \, \alpha^\lor$ are clearly admissible.
For any $i=1,...,d$, by the Meyer-It\^o formula for continuous semimartingales (see, e.g., Theorem 68 at p.\ 216 in \cite{Protter05}), one has  
\begin{align*}
X_t^i \land \bar{X}_t^i &= X_t^i + 0 \land ( \bar{X}_t^i - X_t^i ) \\
& = X_t^i + \int_0^t  \mathds{1}_{ \{\bar{X}_s^i -X_s^i \leq 0 \}} d(\bar{X}^i -X^i)_s  - \frac{1}{2}L_t^0(\bar{X}^i -X^i)\\ 
& = \xi^i   +  \int_0^t \Big( \mathds{1}_{ \{X_s^i <\bar{X}_s^i \} } \Big( b_1^i(s,X_s^i) + b_2^i(s) \alpha^i _s \Big)  + \mathds{1}_{ \{X_s^i \geq \bar{X}_s^i \} } \Big( b_1^i(s,\bar{X}_s^i) + b_2^i(s) \bar{\alpha}_s^i \Big) \Big) ds
 \\ 
&\quad  + \int_0^t \Big( \mathds{1}_{ \{X_s^i <\bar{X}_s^i \} } \sigma^i(s,X_s^i) + \mathds{1}_{ \{X_s^i \geq \bar{X}_s^i \} } \sigma^{i} (s,\bar{X}_s^i) \Big) dW_s \\
&\quad  + \int_0^t \Big( \mathds{1}_{ \{X_s^i <\bar{X}_s^i \} } \sigma^{i,\circ}(s,X_s^i) + \mathds{1}_{ \{X_s^i \geq \bar{X}_s^i \} } \sigma^{i,\circ} (s,\bar{X}_s^i) \Big) d B_s \\
& \quad - \frac{1}{2}L_t^0(\bar{X}^i -X^i),
\end{align*}  
where $L_t^0(\bar{X}^i -X^i)$ is the local time of the process $\bar{X}^i-X^i$ at 0 (see, e.g., Chapter IV in \cite{Protter05}).
Denote by $[\bar{X}^i - X^i,\bar{X}^i - X^i ]$ the quadratic variation of the process $\bar{X}^i  -X^i$ (see, e.g., p. 66 in \cite{Protter05}). 
Using the characterization of local times (see, e.g., Corollary 3 at p.\ 230 in \cite{Protter05}), we obtain
\begin{align*}
L_t^0 (\bar{X}^i -X^i) & = \lim_{\varepsilon \to 0} \frac{1}{\varepsilon} \int_0^t \mathds{1}_{\{ 0 \leq \bar{X}_s^i - X_s^i \leq \varepsilon \}} d[\bar{X}^i - X^i,\bar{X}^i - X^i ]_s \\
& = \lim_{\varepsilon \to 0} \frac{1}{\varepsilon} \int_0^t \mathds{1}_{\{ 0 \leq \bar{X}_s^i - X_s^i \leq \varepsilon \}}  |\sigma^i (s,\bar{X}_s^i) - \sigma^i (s,{X}_s^i)|^2  ds \\
& \quad  + \lim_{\varepsilon \to 0} \frac{1}{\varepsilon} \int_0^t \mathds{1}_{\{ 0 \leq \bar{X}_s^i - X_s^i \leq \varepsilon \}}  |\sigma^{i,\circ} (s,\bar{X}_s^i) - \sigma^{i,\circ} (s,{X}_s^i)|^2 ds \\
& \leq 2 K^2 \lim_{\varepsilon \to 0} \frac{1}{\varepsilon} \int_0^t \mathds{1}_{\{ 0 \leq \bar{X}_s^i - X_s^i \leq \varepsilon \}}  |\bar{X}_s^i- {X}_s^i|^2 ds
\\
& \leq 2 K^2 \lim_{\varepsilon \to 0} \frac{1}{\varepsilon} \int_0^t \mathds{1}_{\{ 0 \leq \bar{X}_s^i - X_s^i \leq \varepsilon \}}  \varepsilon^2 ds = 0. 
\end{align*}  
Combining together the latter two equalities and using the definition of $\alpha^\land$, we conclude that
$$                      
X_t^i \land \bar{X}_t^i = \xi^i + \int_0^t \Big( b_1^i(s,X_s^i \land \bar{X}_s^i) + b_2^i(s) \alpha_s^{i,\land} \Big)  ds + \int_0^t  \sigma^i(s,X_s^i \land \bar{X}_s^i) dW_s + \int_0^t  \sigma^{i,\circ} (s,X_s^i \land \bar{X}_s^i) d B_s. 
$$
Since the index $i$ is generic, this proves that $X \land \bar{X} = X^{\alpha^\land}$.

In the same way,  
the process $X \lor \bar{X} = \bar X + 0 \lor ( X -\bar X )$ solves the SDE controlled by $\alpha ^\lor$, completing the proof of the lemma. 
\end{proof}  

\subsubsection{Proof of Proposition \ref{proposition best reply map increasing}}
By monotonicity of the map $p$ (see \eqref{eq projection monotone}), it is sufficient to prove Claim \ref{proposition best reply map increasing: claim Gamma}.

Take $\mu, \, \bar \mu \in  \mathcal M _{\text{\tiny{$B$}}}^2$ with $\mu \leq^{\text{\tiny{\rm st}}}  \bar \mu$ and let $(\bar X, \bar \alpha )$ and $(X,\alpha)$ be the optimal pairs related to $\bar{\mu} $ and $\mu$, respectively.  
 
By the admissibility  of $\alpha^{\vee}$ and the optimality of $\bar \alpha$, thanks to Lemma \ref{lemma lattice of trajectories} we can write 
\begin{align}\label{J-J}
0 \leq J(\alpha^{\vee},\bar{\mu}) - J(\bar \alpha,{\bar{\mu}}) 
= &  \mathbb{E} \bigg[ \int_0^T \Big(f(t,X_t\lor  \bar X_t, {\bar{\mu}}_t)  - f(t,\bar X _t, {\bar{\mu}}_t) \Big) dt  \bigg] \\ \notag
& \quad \quad + \mathbb{E} \bigg[ \int_0^T \sum_{i=1}^d \Big( l^i(t,X_t^i\lor \bar X_t^i, \alpha_t^{i,\vee}) -l^i(t,\bar X_t^i, \bar \alpha _t^i) \Big) dt \bigg] \\ \notag
& \quad \quad \quad \quad + \mathbb{E} \left[ g(X_T \lor \bar X_T, {\bar{\mu}}_T)- g(\bar X_T, {\bar{\mu}}_T) \right].
\end{align}
Next, from Condition \ref{assumption submodularity multidimensional for J} in Assumption \ref{assumption for J} we have 
\begin{align*}
\mathbb{E} \bigg[ \int_0^T \Big( f(t,X_t \lor \bar X_t, {\bar{\mu}}_t)  - f(t,\bar X_t, {\bar{\mu}}_t) \Big) dt  \bigg]  
&\leq \mathbb{E} \bigg[ \int_0^T  \Big( f(t,X_t , {\bar{\mu}}_t)  -f(t,X_t \land \bar X_t, {\bar{\mu}}_t) \Big) dt  \bigg]
 \\
&\leq \mathbb{E} \bigg[ \int_0^T  \Big( f(t,X_t , {{\mu}}_t)  -f(t,X_t \land \bar X_t, {{\mu}}_t) \Big) dt  \bigg],   
\end{align*}
as well as
\begin{align*} 
\mathbb{E} \left[ g(X_T \lor \bar X_T, {\bar{\mu}}_T)  - g(\bar X_T, {\bar{\mu}}_T) \right] 
& \leq \mathbb{E} \left[ g(X_T , {\bar{\mu}}_T)  -g(X_T \land \bar X_T, {\bar{\mu}}_T) \right] \\
& \leq \mathbb{E} \left[ g(X_T , {{\mu}}_T)  -g(X_T \land \bar X_T, {{\mu}}_T) \right].
\end{align*}
Moreover, by the definition of $\alpha^{\vee}$ and $\alpha^\wedge$, we see that
\begin{align*}
\mathbb{E} \bigg[ \int_0^T \Big( l^i(t,X_t^i \lor \bar X_t^{i}, \alpha_t^{i,\vee}) &-l^i(t,\bar X_t^{i}, \bar \alpha _t^{i}) \Big) dt \bigg] \\
& =  \mathbb{E} \bigg[ \int_0^T  \mathds{1}_{\{ X_t^i \geq  \bar X _t^i \}}  \Big( l^i(t,X_t^i, {\alpha}_t^{i}) - l^i(t, \bar X_t^{i}, \alpha_t^{i,\wedge}) \Big) dt \bigg] \\
& = \mathbb{E} \bigg[ \int_0^T \Big( l^i(t,X_t^i, \alpha_t^i ) -l^i(t,X_t^i \land \bar X_t^i, \alpha_t^{i,\wedge}) \Big) dt \bigg]. 
\end{align*} 
Finally, the latter three inequalities allow to estimate (\ref{J-J}), thus obtaining
\begin{align*}
0 \leq J(\alpha^{\vee},\bar{\mu}) - J(\bar \alpha,{\bar{\mu}})  
& \leq    \mathbb{E} \bigg[ \int_0^T \Big(f(t,X_t , {{\mu}}_t)  -f(t,X_t \land \bar X_t, {{\mu}}_t) \Big) dt  \bigg] \\ \notag
& \quad \quad + \mathbb{E} \bigg[ \int_0^T \sum_{i=1}^d \Big( l^i(t,X_t^i, \alpha_t^i ) -l^i(t,X_t^i \land   \bar X_t^i, \alpha_t^{i,\wedge}) \Big) dt \bigg] \\ \notag
& \quad \quad \quad \quad + \mathbb{E} \left[ g(X_T , {{\mu}}_T)- g(X_T \land \bar  X_T, {{\mu}}_T) \right],
\end{align*}
which, thanks to Lemma \ref{lemma lattice of trajectories}, can be written as
\begin{equation}
\label{J_J.switch}
    0\leq J(\alpha^{\vee},\bar{\mu}) - J(\bar \alpha ,{\bar{\mu}}) \leq   J(\alpha ,{\mu}) -  J(\alpha^\wedge,{\mu}). 
\end{equation}

Hence, by optimality of $\alpha$, the control $\alpha^\wedge $ is also a minimizer for $J(\cdot,\mu)$, and, by uniqueness of optimal controls (see Lemma \ref{lemma existence of optimal controls}), we conclude that 
$\alpha=\alpha^\land$. Therefore, by Lemma \ref{lemma lattice of trajectories} we obtain $X \land  \bar X=X$; that is, $X \leq \bar X $, completing the proof. 

\subsection{Proof of Proposition \ref{proposition best reply map increasing} under Assumption \ref{assumption for comparison}}
    \label{subsection proof best reply increasing: comparison}
Again, by monotonicity of the map $p$ (see  \eqref{eq projection monotone}), it is sufficient to prove only Claim \ref{proposition best reply map increasing: claim Gamma}.

The proof is divided in two steps. 
\smallbreak\noindent
\emph{Step 1.}
In this step we prove the monotonicity of the feedback function $\hat \alpha$ providing the optimal controls (see Lemma \ref{lemma existence of optimal controls}).  
In particular, we show that, for any $(t,x,\mu,y), \, (t,\bar x, \bar \mu, \bar y) \in [0,T] \times \R^d \times \mathcal P _2 (\R^d) \times \R^d$, 
\begin{equation}
    \label{eq comparative alpha}
   \text{ if $ x \leq \bar x$, $\bar y \leq y$, $\mu \leq^{\text{\tiny{\rm st}}}  \bar \mu$, then } \hat \alpha (t,x,\mu,y) \leq \hat \alpha (t,\bar x, \bar \mu, \bar y). 
\end{equation}

Fix
$(t,x,\mu,y), \, (t,\bar x, \bar \mu, \bar y) \in [0,T] \times \R^d \times \mathcal P _2 (\R^d) \times \R^d$
with 
$ x \leq \bar x$, $\bar y \leq y$, $\mu \leq^{\text{\tiny{\rm st}}}  \bar \mu$. 
Set $
\alpha := \hat \alpha (t,x,\mu,y)$ and  $\bar \alpha :=  \hat \alpha (t,\bar x, \bar \mu, \bar y)$.
Since $\bar \alpha$ minimizes the function $a\mapsto y b_2(t)a + h(t,\bar x, \bar \mu, a)$, we have
\begin{align*}
    0 \leq \bar y b_2(t) \alpha \lor \bar \alpha + h(t,\bar x, \bar \mu, \alpha \lor \bar \alpha) - \bar y b_2(t)\bar \alpha + h(t,\bar x, \bar \mu, \bar \alpha),
\end{align*} 
so that, by using Condition \ref{assumption submodularity multidimensional with comparison.h submodular a} in Assumption \ref{assumption for comparison} and the linearity of the map $a\mapsto y b_2(t)a$, we obtain 
\begin{align*}
    0 & \leq  \bar y b_2(t) \alpha \lor \bar \alpha + h(t,\bar x, \bar \mu, \alpha \lor \bar \alpha) - \bar y b_2(t)\bar \alpha + h(t,\bar x, \bar \mu, \bar \alpha) \\
    & \leq \bar y b_2(t) \alpha + h(t,\bar x, \bar \mu, \alpha ) - \bar y b_2(t) \alpha \land \bar \alpha + h(t,\bar x, \bar \mu,  \alpha \land \bar \alpha).
\end{align*}
From the latter inequality, we can use Condition \ref{assumption submodularity multidimensional with comparison.h decreasing differences} in Assumption \ref{assumption for comparison}, together with the fact that the map $y \mapsto y b_2(t) (\alpha - \alpha \land \bar \alpha)$ is nonincreasing (by Condition \ref{assumption monotonicity multidimensional with comparison} in Assumption \ref{assumption for comparison}), in order to obtain 
\begin{align*}
    0  & \leq \bar y b_2(t) \alpha + h(t,\bar x, \bar \mu, \alpha ) - \bar y b_2(t) \alpha \land \bar \alpha + h(t,\bar x, \bar \mu,  \alpha \land \bar \alpha) \\ 
    & \leq y b_2(t) \alpha + h(t, x,  \mu, \alpha ) - y b_2(t) \alpha \land \bar \alpha + h(t, x,  \mu,  \alpha \land \bar \alpha).
\end{align*}
This implies that $\alpha \land \bar \alpha$ minimizes the function $ a \mapsto y b_2(t)a + h(t, x, \mu, a)$. Therefore, since the minimizer $\alpha$ is unique by Remark \ref{remark Lipschitz feedback}, we obtain that $\alpha \land \bar \alpha= \alpha$, so that \eqref{eq comparative alpha} is proved. 

\smallbreak\noindent
\emph{Step 2.}
Take $\mu, \, \bar \mu \in  \mathcal M _{\text{\tiny{$B$}}}^2$ with $\mu \leq^{\text{\tiny{\rm st}}}  \bar \mu$ and let $(\bar X, \bar \alpha )$ and $(X,\alpha)$ be the optimal pairs related to $\bar{\mu} $ and $\mu$, respectively. 
By Lemma \ref{lemma existence of optimal controls}, $X$ and $\bar X$ are given by the unique solutions $(X,Y,Z,Z^\circ)$ and $(\bar X, \bar Y,\bar Z,\bar Z ^\circ)$ to the FBSDEs depending on $\mu$ and $\bar \mu$, respectively. 

We now aim at using the monotonicity of $\hat \alpha$ proved in Step 1 (see \eqref{eq comparative alpha}) and Assumption \ref{assumption for comparison} to employ a comparison theorem for FBSDEs (see Theorem 2.2 in \cite{chen.luo2021}) in order to show that 
\begin{equation}
    \label{eq comparison pronciple forward component}
X \leq \bar X.
\end{equation}
First, observe that Assumptions $\mathscr A 1 - \mathscr A 4$ in \cite{chen.luo2021} (with the extra growth in $\mu$ and $\bar \mu$) clearly follow from our Assumptions \ref{assumption existence} and \ref{assumption for comparison}, together with the Lipschitz continuity and the growth of $\hat \alpha $ (see Remark \ref{remark Lipschitz feedback}).
In particular, we underline that, (with minimal adjustment) the results in \cite{chen.luo2021} cover also our case, in which the growth conditions in Assumptions $\mathscr A 1 - \mathscr A 4$ in \cite{chen.luo2021} are replaced with
$$
|\hat b (t,x,\nu,y)| \leq C(1 +|x| + \| \nu \|_1 + |y|) , \quad  | \hat h (t,x,\nu,y)| + |\hat g (x,\nu)| \leq C(1 + \| \nu \|_1 +|y|), 
$$
for any $(t,x,\nu,y) \in [0,T] \times \R^d \times \mathcal P _2 (\R^d) \times \R^d$.
We remain to check the conditions  in \cite{chen.luo2021} on the monotonicity of $(\hat b, \hat h, \hat g)$. 
To do so, take $(t,x,\nu,y), \, (t,\bar x, \bar \nu, \bar y) \in [0,T] \times \R^d \times \mathcal P _2 (\R^d) \times \R^d$ with  $ x \leq \bar x$, $\bar y \leq y$, $\nu \leq^{\text{\tiny{\rm st}}}  \bar \nu$.  
 From the monotonicity $\hat \alpha$ (see \eqref{eq comparative alpha}) and from Condition \ref{assumption monotonicity multidimensional with comparison} in Assumption \ref{assumption for comparison} we obtain:
\begin{equation}
    \label{eq monotonicity b}
    \hat b (t,x,\nu,y) \leq \hat b (t,\bar x,\bar \nu, \bar y).
\end{equation}
 From Conditions \ref{assumption monotonicity multidimensional with comparison} and \ref{assumption submodularity multidimensional with comparison.h for using comparison principle} in Assumption \ref{assumption for comparison}, and from the monotonicity $\hat \alpha$  we have: 
\begin{equation}
    \label{eq monotonicity of h and g}
    \hat h (t,\bar x,\bar \nu, \bar y) \leq  \hat h (t, x, \nu,  y) \quad \text{and} \quad  \hat g (\bar x,\bar \nu) \leq  \hat g ( x, \nu).
\end{equation}
The conditions \eqref{eq monotonicity b} and \eqref{eq monotonicity of h and g} are a slightly different version of Assumptions $\mathscr A 5 - \mathscr A 7$ and of the conditions in the statement of Theorem 2.2 in \cite{chen.luo2021}. 
In particular, the inequalities in \eqref{eq monotonicity of h and g} have opposite direction with respect to the conditions in \cite{chen.luo2021}, and the monotonicity in $y$ in \eqref{eq monotonicity b} is inverted as well.  
This implies that we can employ a slightly different version of Theorem 2.2 in \cite{chen.luo2021}, which gives $X \leq \bar X$ and $\bar Y \leq Y$. 
This completes the proof of the proposition.

\subsection{Proof of Lemma \ref{lemma a priori estimates}}
    \label{subsection proof a priori estimate}
We begin by proving the uniform a priori estimate \eqref{eq moment a priori estimates} on the moments of the optimally controlled state processes. 
For $\alpha \in \mathcal A$ and $\mu \in  \mathcal M _{\text{\tiny{$B$}}}^2$, let $X^{\alpha,\mu}$ denotes the solution to the SDE$_{(\alpha,\mu)}$  \eqref{eq controlled SDE}.  
By boundedness of $b_1$ in $\mu$, classical estimates give
\begin{equation}
    \label{eq estimate SDE with control}
    \mathbb E \bigg[ \sup_{t\in[0,T]} |X_t^{\alpha,\mu}|^2 \bigg] 
    \leq C \bigg( 1 + \mathbb E [|\xi|^2] + \mathbb E \bigg[ \int_0^T |\alpha_t|^2 dt \bigg] \bigg).
\end{equation} 
If now $A$ is compact, the estimate \eqref{eq moment a priori estimates} is obvious.
Suppose instead that Condition \ref{assumption a priori estimates coercivity} in Assumption \ref{assumption a priori estimates} holds. 
Denote by $\alpha^\mu$ the optimal control for $\mu$ (by Lemma \ref{lemma existence of optimal controls} such a control exists unique) and let $\bar{\alpha} \in \mathcal A$ be the control constantly equal to $a_0$ (see Assumption \ref{assumption existence}).
By using Condition \ref{assumption existence: growth conditions on costs} in Assumption \ref{assumption existence} and the optimality of $\alpha^\mu$, we obtain
$$
\kappa \bigg( \mathbb E \bigg[ \int_0^T |\alpha_t^\mu|^2 dt \bigg] -T -1 \bigg) \leq J(\alpha^\mu,\mu) \leq J(\bar \alpha,\mu) 
\leq C \bigg(  1+ \mathbb E \bigg[  \sup_{t\in[0,T]} |X_t^{\bar{\alpha},\mu}|^2 + \int_0^T | \bar \alpha _t|^2 dt \bigg] \bigg),
$$
so that, by \eqref{eq estimate SDE with control}, we deduce that
$$ 
\mathbb E \bigg[ \int_0^T |\alpha_t^\mu|^2 dt \bigg] \leq C \bigg( 1+ \mathbb E \bigg[ \int_0^T  | \bar \alpha _t|^2 dt \bigg] \bigg) = C(1+a_0 T), 
$$
for a constant $C$ which does not depend on $\mu$. 
From the latter estimate, we can again employ  \eqref{eq estimate SDE with control} in order to conclude that 
$$
\mathbb E \bigg[ \sup_{t\in[0,T]} |X_t^{\mu}|^2  \bigg] \leq C,
$$
which (since the generic constant $C$ does not depend on $\mu$) implies the uniform estimate \eqref{eq moment a priori estimates}. 

We next prove that $\overline{\Gamma} \in \bH^2$.
In doing so, the monotonicity of the best reply map $R$ (see Proposition \ref{proposition best reply map increasing}) will play an essential role.
First, observe that there exists a sequence $(N^n)_n \subset \bH^2$ such that
$$
\overline \Gamma = \sup_n R(N^n), \quad \mathbb P \otimes \pi \text{-a.e.\ in $\Omega \times [0,T]$}.
$$
Recalling the notation introduced in \eqref{eq definition land lor processes}, construct the sequences  
$$
N^{\land,n}:= N^n \land...\land N^0 \quad \text{and} \quad N^{\lor,n}:= N^n \lor...\lor N^0, \quad n\in \mathbb N.
$$
Clearly, we have
$$
\mathbb E \bigg[ \int_0^T (|N^{\land,n}_t|^2 + |N^{\lor,n}_t|^2) d\pi(t) \bigg] \leq 2 \sum_{j=1}^n \mathbb E \bigg[ \int_0^T |N^j_t|^2 d\pi(t) \bigg],  
$$
so that the processes $N^{\land,n}$ and $N^{\lor,n}$ are square integrable; that is,
\begin{equation}
    \label{eq N sup inf square integrable} 
    N^{\land,n}, \, N^{\lor,n} \in \mathbb H ^2.
\end{equation}

Observe next that, since $N^{\land,n} \leq N^n \leq N^{\lor,n}$,  by Proposition \ref{proposition best reply map increasing} we have
$R(N^{\land,n}) \leq R( N^n) \leq R(N^{\lor,n})$, so that
\begin{equation}\label{eq comparison with monotone sequences}
R(N^{\land,n})\land 0 \leq R( N^n)\land 0  \leq R( N^n)\lor 0 \leq R(N^{\lor,n}) \lor 0.
\end{equation}
Also, since the sequences $(N^{\land,n})_n$ and $(N^{\lor,n})_n$ are monotone, again by Proposition \ref{proposition best reply map increasing} we have that $(R(N^{\land,n}))_n$ is decreasing and that $(R(N^{\lor,n}))_n$ is increasing. 
Therefore, the sequences
\begin{equation}\label{eq contruction of monotone responses}
(|R(N^{\land,n})\land 0|^2 )_n \text{ and } (|R(N^{\lor,n}) \lor 0|^2)_n \text{ are increasing}.
\end{equation}
 
We can now estimate $|\overline \Gamma|^2$. 
Using \eqref{eq comparison with monotone sequences} we find 
\begin{align*} 
|\overline \Gamma|^2 & = \Big|\sup_n R(N^n)\Big |^2 
= \Big|\sup_n (R(N^n)\land 0 +R(N^n)\lor 0)   \Big |^2 \\
& \leq \sup_n |R(N^n)\land 0|^2 + \sup_n |R(N^n)\lor 0|^2 \\
& \leq \sup_n |R(N^{\land,n})\land 0|^2 + \sup_n |R(N^{\lor,n})\lor 0|^2.
\end{align*}
Hence, by the monotonicity in \eqref{eq contruction of monotone responses}, we obtain
$$
|\overline \Gamma|^2  \leq \lim_n \Big( |R(N^{\land,n})\land 0|^2 + |R(N^{\lor,n})\lor 0|^2 \Big).
$$
The latter allows to use Fatou's lemma in order to find
\begin{align*}
\E \bigg[ \int_0^T |\overline \Gamma_t |^2 d\pi(t) \bigg]  & \leq  \E \bigg[ \int_0^T \lim_n \Big( |R(N^{\land,n})_t\land 0|^2 + |R(N^{\lor,n})_t\lor 0|^2 \Big) d\pi(t) \bigg]\\
& \leq \liminf_n \E \bigg[ \int_0^T \Big( |R(N^{\land,n})_t\land 0|^2 + |R(N^{\lor,n})_t\lor 0|^2 \Big)  d\pi(t) \bigg]\\
& \leq \liminf_n \E \bigg[ \int_0^T \Big( |R(N^{\land,n})_t|^2 + |R(N^{\lor,n})_t|^2 \Big) d\pi(t) \bigg] < \infty, 
\end{align*}
where the last inequality follows from \eqref{eq moment a priori estimates} and from \eqref{eq N sup inf square integrable}. 
Hence we have $\overline{\Gamma} \in \bH^2$.

In the same way, it is possible to show that $\underline{\Gamma} \in \bH^2$, completing the proof of the lemma.

\section{Proof of Theorems \ref{theorem main FBSDE}, \ref{theorem main} and \ref{theorem main fictitious play}}
 \label{section proof main results}
\subsection{Proof of Theorem \ref{theorem main FBSDE}}
     \label{subsection proof first main result}
We prove the two claims separately.
\subsubsection{Proof of Claim \ref{theorem.main.existence.FBSDE}: the fixed point argument}
Define the set 
$$
 L := \{ M \in \bH^2 | \underline \Gamma \leq M \leq \overline \Gamma \}.
$$
The partially ordered set  $(L, \leq)$ is a complete lattice (see \eqref{eq definition land lor processes}), and, by the definition of $\underline \Gamma$ and $\overline \Gamma$ (see \eqref{eq definition Gamma upper lower bounds}), we have $R(M) \in  L$ for any $M\in \bH^2$.
Moreover, by Proposition \ref{proposition best reply map increasing}, the best reply map $R$ is increasing from $ L$ into itself.   
We can therefore employ Tarski's fixed point theorem (see \cite{T}) in order to deduce that the set of fixed points of the map ${R}: {L}\to {L} $ is a nonempty complete lattice. 
In light of Remark \ref{remark characterization of MFGE}, the set of fixed points of the map $R$ coincides with the set of (forward components of the) solutions to the MKV FBSDE \eqref{eq MKV FBSDE}, completing the proof of Claim \ref{theorem.main.existence.FBSDE}.

\subsubsection{Proof of Claim \ref{theorem.main.general.convergence.FBSDE}: convergence of the algorithm}
    \label{Proof of Claim theorem.main.general.convergence.FBSDE}
It is sufficient to prove  Claim \ref{theorem.main.general.convergence.up.FBSDE}, as the proof of Claim \ref{theorem.main.generalconvergence.down.FBSDE} follows by similar arguments. 
The proof is divided in four steps. 
\smallbreak\noindent
\emph{Step 1.}
We begin by observing that, by the definition of $\underline \Gamma$ (see \eqref{eq definition Gamma upper lower bounds}) and by the monotonicity of $R$ (see Proposition \ref{proposition best reply map increasing}), one has 
$$
\underline{X}^0 = \underline \Gamma \leq R (\underline \Gamma) = \underline X ^1, 
$$
so that, iterating the map $R$, the monotonicity of $R$ gives
\begin{equation}\label{eq X n increasing}
\underline{X}^n \leq \underline X ^{n+1}, \quad  n \in \mathbb N.
\end{equation}
Therefore, one can define the process 
\begin{equation}\label{eq point conv of X n to X}
\underline X  := \sup_n \underline X ^n= \lim_n \underline X ^n, \quad \mathbb P \otimes \pi \text{-a.e.\ in $\Omega \times [0,T]$}. 
\end{equation}
Obviously, since $\underline \Gamma \leq \underline X ^n \leq \overline \Gamma$ for every $n \in \mathbb N$, we have
\begin{equation}
    \label{eq a priori of everything} 
\text{$|\underline X _t |^2, \, |\underline X _t^n|^2 \leq |\underline \Gamma _t|^2 + |\overline \Gamma _t|^2 $ for any $n \in \mathbb N$}, \quad \mathbb P \otimes \pi \text{-a.e.\ in $\Omega \times [0,T]$}, 
\end{equation}
 so that, by integrability of $\underline \Gamma$ and of $\overline \Gamma$ (see Lemma \ref{lemma a priori estimates}) and by the dominated convergence theorem one obtains 
\begin{equation}
    \label{eq convergence X n to X sup in L2}
    \lim_n \E \bigg[ \int_0^T |\underline X _t^n - \underline X _t|^2 d\pi(t) \bigg] =0.
\end{equation}
Moreover, consider the stochastic flows $ \underline{ \mu}^n,\, \underline \mu \in  \mathcal M _{\text{\tiny{$B$}}}^2$  given by 
\begin{equation}
    \label{eq definition mu n and mu inside the proof}
    \underline{ \mu}^n =( \mathbb P _{\text{\tiny{$\underline X ^{n}_t|B$}}})_{t\in [0,T]}, \quad n\in \mathbb N, \quad \underline{ \mu} :=( \mathbb P _{\text{\tiny{$\underline X _t|B$}}})_{t\in [0,T]}.
\end{equation} 
Notice that the processes $\underline{ \mu}^n$ are those defined in  \eqref{eq learning procedure mu}.
From \eqref{eq X n increasing} and \eqref{eq point conv of X n to X}, we have that the sequence $| \underline X _t^n  - \underline X _t |^2$ is decreasing and it converges to 0.
Moreover, by \eqref{eq a priori of everything} and by Lemma \ref{lemma a priori estimates}, we have $\mathbb E [| \underline X _t^0  - \underline X _t |^2] \leq 4 \mathbb E [ |\underline \Gamma _t|^2+  |\overline \Gamma _t|^2|] < \infty$ for $\pi$-a.a $t \in [0,T]$. 
Thus, by the monotone convergence theorem for the conditional expectation, together with elementary properties of the 2-Wasserstein distance $\ell_{\R^d,2}$ on $\R^d$ (see \eqref{eq Wasserstain distance}), we obtain
$$
\lim_n \ell_{\R ^d, 2}^2 ( \underline \mu_t ^n , \underline \mu_t )  \leq \lim_n \mathbb E \big[ | \underline X _t^n  - \underline X _t |^2 |  B \big] =0, \quad \text{$\mathbb P$-a.s., for $\pi$-a.a.\ $t\in [0,T]$,}
$$
Furthermore, by \eqref{eq X n increasing} and the monotonicity of the projection map $p$ (see \eqref{eq projection monotone}), the latter limit is monotonic w.r.t.\ $\leq^{\text{\tiny{$\rm st$}}}$; that is, 
\begin{equation}\label{eq point conv of mu n to mu}
    \underline{ \mu}^n \leq^{\text{\tiny{$\rm st$}}} \underline{ \mu}^{n+1} \leq^{\text{\tiny{$\rm st$}}} \underline{ \mu}, \quad \text{and} \quad \ell_{\R^d, 2} (\underline{ \mu}_t^n, \underline{ \mu}_t) \to 0 \text{ as $n\to \infty$, $\mathbb P$-a.s., for $\pi$-a.a.\ $t\in [0,T]$.}
\end{equation} 

By the definition of  $(\underline X ^{n}, \underline Y ^{n}, \underline Z ^{n}, \underline Z ^{\circ, n})$ and $\underline \mu^n$, the process $(\underline X ^{n}, \underline Y ^{n}, \underline Z ^{n}, \underline Z ^{\circ, n})$ solves the system 
\begin{align}
    \label{eq FBSDE n}
    & d \underline X _t ^n = \hat b ( t, \underline X _t^n, \underline \mu _t^{n-1}, \underline Y _t^n ) dt + \sigma (t, \underline X_t^n) d W_t + \sigma^\circ (t, \underline X_t^n) d B_t, \quad \underline X _0^n = \xi, \\  \notag
    & d \underline Y _t^n = - \hat h (t,\underline X _t^n,\underline \mu _t ^{n-1}, \underline Y _t^n) dt + \underline Z _t^n dW_t + \underline Z _t^{\circ,n} dB_t, \quad \underline Y _T^n = \hat g( \underline X _T^n, \underline \mu _T^{n-1}). \notag
\end{align}
for any $n \in \mathbb{N}$ with $n \geq 1$. 
The idea of the proof is now to take (monotonic) limits in the  system above in order to approximate the minimal solution of the MKV FBSDE \eqref{eq MKV FBSDE}. 
This will be done in the subsequent two steps.
\smallbreak\noindent
\emph{Step 2.} 
In this step we will prove that the process $(\underline Y^n, \underline Z ^n, \underline Z ^{\circ,n})$ converges to the unique strong solution $(\underline Y, \underline Z, \underline Z ^\circ)$ of the backward stochastic differential equation (BSDE, in short)
\begin{equation}\label{eq limi BSDE with X}
d \underline Y _t = - \hat h (t,\underline X _t, \underline \mu _t, \underline Y _t) dt + \underline Z _t dW_t + \underline Z _t^{\circ} dB_t, \quad \underline Y _T = \hat g( \underline X _T, \underline \mu _T). 
\end{equation} 
Recall that a solution of such a BSDE is defined as a process $(\underline Y, \underline Z, \underline Z ^\circ)$  such that
\begin{equation}
    \label{eq BSDE integrability}
    \E \bigg[ \sup_{ t \in [0,T]} | \underline Y _t|^2 
    + \int_0^T ( |  \underline Z _t |^2+ |  \underline Z _t^\circ |^2 )dt \bigg]  < \infty,
\end{equation}
and such that the equation \eqref{eq limi BSDE with X} is satisfied.

By \eqref{eq FBSDE n}, the processes $(\underline Y^n, \underline Z ^n, \underline Z ^{\circ,n})$ solve the BSDEs
$$
d \underline Y _t^n = - \hat h (t,\underline X _t^n,\underline \mu _t ^{n-1}, \underline Y _t^n) dt + \underline Z _t^n dW_t + \underline Z _t^{\circ,n} dB_t, \quad \underline Y _T^n = \hat g( \underline X _T^n, \underline \mu _T^{n-1}).
$$
Therefore, by stability properties for BSDEs (see, e.g., Theorem 4.2.3 at p.\ 84 in \cite{zhang2017}) we obtain a first estimate
\begin{align}
    \label{eq estimate stability BSDE}
& \E \bigg[ \sup_{ t \in [0,T]} |\underline{Y}_t^n - \underline Y _t|^2 
    + \int_0^T ( | \underline{Z}_t^n - \underline Z _t |^2+ | \underline{Z} _t^{\circ,n} - \underline Z _t^\circ |^2 )dt \bigg] \\ \notag 
& \leq C \E \bigg[ |\hat g (\underline X _T^n, \underline \mu _T^{n-1})- \hat g (\underline X_T, \underline \mu _T) |^2   + \int_0^T | \hat h (t, \underline X_t^n, \underline \mu _t^{n-1}, \underline Y _t) - \hat h (t,\underline X _t, \underline \mu _t, \underline Y _t) |^2 dt \bigg ],
\end{align} 
for a suitable constant $C$ which does not depend on $n$.
Hence, we proceed by estimating the right hand side in \eqref{eq estimate stability BSDE}.

We begin by studying the convergence of the terms with $\hat g$. 
Write
\begin{align}
    \label{eq estimate stability terminal condition}  
 \E \Big[ |\hat g (\underline X _T^n, \underline \mu _T^{n-1})- \hat g (\underline X_T, \underline \mu _T) |^2 \Big] \leq &
 2 \E \Big[ | D_x g (\underline X _T^n, \underline \mu _T^{n-1})-  D_x g (\underline X_T, \underline \mu _T^{n-1}) |^2 \Big] \\ \notag   
 & +2  \E \Big[ | D_x g (\underline X_T, \underline \mu _T^{n-1})-  D_x g (\underline X_T, \underline \mu _T) |^2 \Big].
\end{align}
From the monotonicity of the sequence $(\underline \mu ^n)_n$ in \eqref{eq point conv of mu n to mu} and the submodularity assumptions
(in particular, either Condition \ref{assumption submodularity multidimensional for J} in Assumption \ref{assumption for J} or  Condition \ref{assumption submodularity multidimensional with comparison} in Assumption \ref{assumption for comparison}), we have 
\begin{align*}
 D_x g (\underline X_T, \underline \mu _T^{n})-  D_x g (\underline X_T, \underline \mu _T) \geq D_x g (\underline X _T, \underline \mu _T^{n+1})- D_x g (\underline X _T, \underline \mu _T) \geq  0,
\end{align*} 
so that 
$$
 |D_x g (\underline X _T, \underline \mu _T^n)- D_x g (\underline X _T, \underline \mu _T)|^2 \geq | D_x g (\underline X _T, \underline \mu _T^{n+1})- D_x g (\underline X _T, \underline \mu _T) |^2. 
$$  
Also, by the growth condition on $D_xg$ (from Condition \ref{assumption existence: growth conditions on costs} in Assumption \ref{assumption existence}), from \eqref{eq a priori of everything} we obtain
\begin{align*}
 \mathbb E [|D_x g (\underline X _T, \underline \mu _T^0)- D_x g (\underline X _T, \underline \mu _T)|^2] & \leq C \mathbb E [ 1 +|\underline X _T|^2  + \| \underline \mu _T^0 \|_1^2 + \| \underline \mu _T \|_1^2]  \\
 & \leq C \mathbb E \big[1+|\underline \Gamma _T|^2 + |\overline \Gamma _T|^2  + \|\mathbb P _{\text{\tiny{$\underline \Gamma _T|B$}}} \|_1^2 + \| \mathbb P _{\text{\tiny{$\underline X _T|B$}}} \|_1^2 \big] \\
 & \leq C \mathbb E [1+|\underline \Gamma _T|^2 + |\overline \Gamma _T|^2 ] < \infty,
\end{align*}
where the latter estimates follows from Lemma \ref{lemma a priori estimates}.
Therefore, by the limits in \eqref{eq point conv of mu n to mu} and the continuity of $D_x g$ (from Assumption \ref{assumption continuity in the measure}), we can use the monotone convergence theorem and obtain
\begin{equation}
    \label{eq convergence Dg 1}
    \lim_n \E \Big[ |D_x g (\underline X _T, \underline \mu _T^n)- D_x g (\underline X _T, \underline \mu _T)|^2 \Big] =0.
\end{equation}  
Finally, by using the uniform Lipschitz continuity of $D_x g$ (from Condition \ref{assumption existence: growth conditions on costs} in Assumption \ref{assumption existence}) and the limits in \eqref{eq convergence X n to X sup in L2}, we obtain 
\begin{equation} 
    \label{eq convergence Dg 2}
    \lim_n \E \Big[ | D_x g (\underline X _T^n, \underline \mu _T^{n-1})-  D_x g (\underline X_T, \underline \mu _T^{n-1}) |^2 \Big]  \leq  C \lim_n \mathbb{E} [|\underline X_T^n - \underline X_T|^2 ] =0,  
\end{equation} 
so that, combining \eqref{eq convergence Dg 1} and \eqref{eq convergence Dg 2} into \eqref{eq estimate stability terminal condition}, we conclude that 
\begin{equation}
    \label{eq convergence Dg}
    \lim_n \E \Big[ |\hat g (\underline X _T^n, \underline \mu _T^{n-1})- \hat g (\underline X_T, \underline \mu _T) |^2 \Big] =0. 
\end{equation}

We next study the convergence of the terms with $\hat h$ in \eqref{eq estimate stability BSDE}.
We write
\begin{align}
    \label{eq estimate stability integral}  
 \E \bigg[ \int_0^T | \hat h (t, \underline X_t^n, \underline \mu _t^{n-1}, \underline Y _t) & - \hat h (t,\underline X _t, \underline \mu _t, \underline Y _t) |^2 dt \bigg ] \\ \notag
& \leq 2
 \E \bigg[ \int_0^T | \hat h (t, \underline X_t^n, \underline \mu _t^{n-1}, \underline Y _t) - \hat h (t,\underline X _t, \underline \mu _t^{n-1}, \underline Y _t) |^2 dt \bigg ]  \\  \notag
 &\quad  + 2 \E \bigg[ \int_0^T | \hat h (t, \underline X_t, \underline \mu _t^{n-1}, \underline Y _t) - \hat h (t,\underline X _t, \underline \mu _t, \underline Y _t) |^2 dt \bigg ] \\ \notag
 & =: 2 \E \bigg[ \int_0^T (|\hat H _t^{1,n}|^2 +|\hat H _t^{2,n}|^2) dt \bigg], 
 \end{align} 
 and we  proceed by estimating separately the two terms in the right hand side of \eqref{eq estimate stability integral}.

We begin with the term involving $\hat H ^{1,n}$. 
By the Lipschitz continuity of $D_x h$ in $(x,a)$ (from Condition \ref{assumption existence: growth conditions on costs} in Assumption \ref{assumption existence}) we obtain 
 \begin{align*}
|\hat H _t^{1,n}|^2 
 \leq & C \Big(   |  \underline X_t^n - \underline X _t|^2 + | \hat \alpha (t, \underline X_t^n, \underline \mu _t^{n-1}, \underline Y _t) - \hat \alpha (t,\underline X _t, \underline \mu _t^{n-1}, \underline Y _t) |^2   \\  \notag 
 & + | (D_x b_1 (t, \underline X_t^n, \underline \mu _t^{n-1}) - D_x b_1 (t,\underline X _t, \underline \mu _t^{n-1})) \underline Y _t |^2 \Big),   
 \end{align*}
 so that, using the Lipschitz continuity of $\hat \alpha$ in $x$ (see Remark \ref{remark Lipschitz feedback}) we find
 \begin{align}\label{eq estimate integral 1}
|\hat H _t^{1,n}|^2 
& \leq   C \Big( |  \underline X_t^n - \underline X _t|^2  + | (D_x b_1 (t, \underline X_t^n, \underline \mu _t^{n-1}) - D_x b_1 (t,\underline X _t, \underline \mu _t^{n-1})) \underline Y _t |^2 \Big).
 \end{align}
Moreover, by continuity of $D_x b_1$ in $x$ (indeed, under either Assumption \ref{assumption for J} or \ref{assumption for comparison}, $D_x b_1$ does not depend on the measure) and the limits in \eqref{eq point conv of X n to X},
we get  
$$
| (D_x b_1 (t, \underline X_t^n, \underline \mu _t^{n-1}) - D_x b_1 (t,\underline X _t, \underline \mu _t^{n-1})) \underline Y _t |^2 \to 0, \text{ as $n \to \infty$},
$$ 
and, by boundedness of $D_x b_1$ we have 
$$
| (D_x b_1 (t, \underline X_t^n, \underline \mu _t^{n-1}) - D_x b_1 (t,\underline X _t, \underline \mu _t^{n-1})) \underline Y _t |^2  \leq C |\underline Y_t|^2.
$$
Since the left hand side in the latter inequality is integrable (by \eqref{eq BSDE integrability}), by the dominated convergence theorem, we obtain 
$$ 
\lim_n \E \bigg[ \int_0^T | (D_x b_1 (t, \underline X_t^n, \underline \mu _t^{n-1}) - D_x b_1 (t,\underline X _t, \underline \mu _t^{n-1})) \underline Y _t |^2  dt \bigg] =0.
$$ 
The latter equality, together with \eqref{eq estimate integral 1} and \eqref{eq convergence X n to X sup in L2},  allows to conclude that 
\begin{equation}
    \label{eq convergence integral 1}
    \lim_n \E \bigg[ \int_0^T |\hat H _t^{1,n}|^2  dt \bigg] = 0. 
\end{equation}

We continue by estimating the term in \eqref{eq estimate stability integral} involving $\hat H ^{2,n}$.  
Observe that, under both Assumptions \ref{assumption for J} or \ref{assumption for comparison}, we have $ D_x b_1 (t, \underline X_t, \underline \mu _t^{n-1}) = D_x b_1 (t, \underline X_t, \underline \mu _t)$, so that 
\begin{align} \label{eq estimate integral 2}
  \hat H_t ^{2,n} =   D_x h (t, \underline X_t, \underline \mu _t^{n-1},  \hat \alpha( t, \underline X_t, \underline \mu _t^{n-1}, \underline Y _t)) - D_x h (t,\underline X _t, \underline \mu _t, \hat \alpha( t,\underline X _t, \underline \mu _t, \underline Y _t)).
\end{align} 
Now, under Assumption \ref{assumption for comparison}, we have shown in the proof of Proposition \ref{proposition best reply map increasing} (see \eqref{eq comparative alpha}) that $\hat \alpha$ is monotone in the measure $\nu$. 
On the other hand, under Assumption \ref{assumption for J}, $\hat \alpha$ does not depend on the measure. 
In both cases, from the monotonicity of the sequence $(\underline \mu ^n)_n$ in \eqref{eq point conv of mu n to mu} and the submodularity  assumptions (i.e., either Assumption \ref{assumption for J} or \ref{assumption for comparison}), we have 
\begin{align*}
 & D_x h (t, \underline X _t, \underline \mu _t^n, \hat \alpha (t, \underline X _t, \underline \mu _t^n, \underline Y _t ))- D_x h (t, \underline X _t, \underline \mu _t, \hat \alpha (t,\underline X _t, \underline \mu _t, \underline Y _t )) \\
  & \quad  \geq 
  D_x h (t, \underline X _t, \underline \mu _t^{n+1}, \hat \alpha (t, \underline X _t, \underline \mu _t^{n+1}, \underline Y _t ))- D_x h (t, \underline X _t, \underline \mu _t, \hat \alpha (t, \underline X _t, \underline \mu _t, \underline Y _t )) 
  \geq  0,
\end{align*}
so that 
\begin{align*}
 & | D_x h (t, \underline X _t, \underline \mu _t^n, \hat \alpha (t, \underline X _t, \underline \mu _t^n, \underline Y _t ))- D_x h (t, \underline X _t, \underline \mu _t, \hat \alpha (t, \underline X_t, \underline \mu _t, \underline Y _t )) |^2  \\ \notag 
  & \quad  \geq 
  | D_x h (t, \underline X _t, \underline \mu _t^{n+1}, \hat \alpha (t, \underline X _t, \underline \mu _t^{n+1}, \underline Y _t ))- D_x h (t, \underline X _t, \underline \mu _t, \hat \alpha (t, \underline X _t, \underline \mu _t, \underline Y _t )) |^2.\notag
\end{align*} 
Also, by the growth conditions on $D_x h$ (from Condition \ref{assumption existence: growth conditions on costs} in Assumption \ref{assumption existence}) and on $\hat \alpha$ (from Remarks \ref{remark Lipschitz feedback}), from \eqref{eq a priori of everything} we obtain
\begin{align} 
\label{eq bounds for Dx for convergence}
 \mathbb E \bigg[ \int_0^T & | D_x h (t, \underline X _t, \underline \mu _t^0, \hat \alpha (t, \underline X _t, \underline \mu _t^0, \underline Y _t )) - D_x h (t, \underline X _t, \underline \mu _t, \hat \alpha (t, \underline X_t, \underline \mu _t, \underline Y _t )) |^2  dt \bigg]  \\ \notag
&   \leq C \mathbb E \bigg[ \int_0^T \Big( 1 + |\underline \Gamma _T|^2 + |\overline \Gamma _T|^2  + |\underline Y _t |^2 \Big) dt \bigg] < \infty, \notag
\end{align}
where the latter estimates follows from Lemma \ref{lemma a priori estimates} and from \eqref{eq BSDE integrability}.
Moreover, in case of Assumption \ref{assumption for J} we have that $\hat \alpha$ does not depend on the measure $\nu$, while under Assumption \ref{assumption for comparison}, we have that the function $\hat \alpha$ is continuous in $\nu$ (see Remark \ref{remark continuity feedback in mu}).
Hence, in both cases we have 
\begin{equation}\label{eq limit for Dx for convergence}
\lim_n | D_x h (t, \underline X _t, \underline \mu _t^n, \hat \alpha (t, \underline X _t, \underline \mu _t^n, \underline Y _t ))- D_x h (t, \underline X _t, \underline \mu _t, \hat \alpha (t, \underline X _t, \underline \mu _t, \underline Y _t ))|^2 = 0.
\end{equation}
Therefore, by using \eqref{eq estimate integral 2}, from the limits in \eqref{eq limit for Dx for convergence} and the estimates in \eqref{eq bounds for Dx for convergence}, we can invoke the monotone convergence theorem in order to obtain 
\begin{equation}
    \label{eq convergence integral 2}
    \lim_n \E \bigg[ \int_0^T |\hat H_t ^{2,n} |^2 dt \bigg] =0.
\end{equation}

Finally, combining together \eqref{eq convergence integral 1} and \eqref{eq convergence integral 2} into \eqref{eq estimate stability integral}, we obtain
$$
\lim_n \E \bigg[ \int_0^T | \hat h (t, \underline X_t^n, \underline \mu _t^{n-1}, \underline Y _t)  - \hat h (t,\underline X _t, \underline \mu _t, \underline Y _t) |^2 dt \bigg ] =0.
$$ 
The latter can be plugged, together with \eqref{eq convergence Dg}, into the first estimate \eqref{eq estimate stability BSDE}, so to conclude that
\begin{equation}
    \label{eq convergence BSDE}
\lim_n \E \bigg[ \sup_{ t \in [0,T]} |\underline{Y}_t^n - \underline Y _t|^2 + \int_0^T ( | \underline{Z}_t^n - \underline Z _t |^2+ | \underline{Z} _t^{\circ,n} - \underline Z _t^\circ |^2 )dt \bigg]=0,
\end{equation}
which completes the proof of the convergence of $(\underline Y^n, \underline Z^n, \underline Z ^{\circ,n})$. 

\smallbreak\noindent 
\emph{Step 3.} 
In this step we will prove that the process $\underline X$ is the unique strong solution to the SDE \eqref{eq controlled SDE} controlled by the feedback $\hat \alpha ( t, \cdot, \underline \mu _t, \underline Y _t)$. 

Indeed, by Lipschitz continuity of the data $b_1, \sigma, \sigma^\circ$ and of the feedback $\hat \alpha$ (see Remark \ref{remark Lipschitz feedback}), there exists a unique strong solution $X$ to the SDE   
\begin{equation}
\label{eq limit SDE}
 d  X _t =  \hat b ( t,  X _t, \underline \mu _t, \underline Y _t)  dt + \sigma (t,  X_t) d W_t + \sigma^\circ (t,  X_t) d B_t,  \quad  X _0 = \xi. 
\end{equation} 
By \eqref{eq FBSDE n}, the process $\underline X ^n$ solves the SDE
\begin{equation*}
 d \underline X _t ^n =  \hat b ( t, \underline X _t^n, \underline \mu _t^{n-1}, \underline Y _t^n) dt + \sigma (t, \underline X_t^n) d W_t + \sigma^\circ (t, \underline X_t^n) d B_t, \quad  \underline X _0^n = \xi.  
\end{equation*} 
 Therefore, by standard stability results for SDEs, we obtain 
 \begin{align} 
     \label{eq stability SDE}
     \E \bigg[ \sup_{t \in [0,T]} | \underline X _t^n -  X _t |^2 \bigg] 
     \leq  C \Bigg( & \E \bigg[ \int_0^T   |b_1(t,  X_t, \underline \mu _t^{n-1}) - b_1(t,  X _t, \underline \mu _t)|^2 dt \bigg] \\ \notag
     & \quad + \E \bigg[ \int_0^T |\hat \alpha ( t,  X _t, \underline \mu _t^{n-1}, \underline Y _t^n) - \alpha ( t,  X _t, \underline \mu _t, \underline Y _t) |^2  dt  \bigg] \bigg). 
\end{align}
We proceed by estimating separately the two terms in the right-hand side of \eqref{eq stability SDE}.

Under Assumption \ref{assumption for J}, the drift $b_1$ does not depend on the measure $\nu$, so that $$b_1(t,  X_t, \underline \mu _t^{n-1}) = b_1(t,  X _t, \underline \mu _t).$$ 
On the other hand, under Assumption \ref{assumption for comparison}, one has $$b_1(t,  X_t, \underline \mu _t^{n-1}) - b_1(t,  X _t, \underline \mu _t) = b_0(t, \underline \mu _t^{n-1}) - b_0(t, \underline \mu _t),$$ with $b_0$ continuous and bounded (from Assumption \ref{assumption a priori estimates}). 
Hence, thanks tot the limits in \eqref{eq point conv of mu n to mu}, by the dominated convergence theorem and  we obtain
\begin{equation} 
    \label{eq convergence drift SDE}
    \lim_n  \E \bigg[ \int_0^T   |b_1(t,  X_t, \underline \mu _t^{n-1}) - b_1(t,  X _t, \underline \mu _t)|^2 dt \bigg]  =0.
\end{equation}

We next estimate the term with $\hat \alpha$. 
By Lipschitz coninuity of $\hat \alpha$ (see Remark \ref{remark Lipschitz feedback}) and by the convergence established in \eqref{eq convergence BSDE} we have
\begin{equation}
    \label{eq convergence alpha  1} 
    \lim_n  \E \bigg[ \int_0^T |\hat \alpha ( t,  X _t, \underline \mu _t^{n-1}, \underline Y _t^n) - \alpha ( t,  X _t, \underline \mu _t^{n-1}, \underline Y _t) |^2  dt  \bigg] \leq C \lim_n \E \bigg[ \int_0^T | \underline Y _t^n -  \underline Y _t |^2  dt  \bigg] =0.  
\end{equation}
Moreover, under Assumption \ref{assumption for J}, $\hat \alpha$ does not depend on the measure $\nu$, so that $$\hat \alpha ( t,  X _t, \underline \mu _t^{n-1}, \underline Y _t) = \hat \alpha ( t,  X _t, \underline \mu _t, \underline Y _t). $$ 
On the other hand, under Assumption \ref{assumption for comparison}, we have shown in the proof of Proposition \ref{proposition best reply map increasing} (see \eqref{eq comparative alpha}) that $\hat \alpha$ is monotone in the measure $\nu$, therefore
$$
 \hat \alpha  (t,   X_t, \underline \mu _t, \underline Y _t) - \hat \alpha (t,  X_t, \underline \mu _t^n, \underline Y _t) \geq \hat \alpha (t,   X_t, \underline \mu _t, \underline Y _t) - \hat \alpha (t,  X_t, \underline \mu _t^{n +1}, \underline Y _t) \geq 0, 
$$
and
$$
 |\hat \alpha  (t,   X_t, \underline \mu _t, \underline Y _t) - \hat \alpha (t,  X_t, \underline \mu _t^n, \underline Y _t) |^2 \geq | \hat \alpha (t,   X_t, \underline \mu _t, \underline Y _t) - \hat \alpha (t,  X_t, \underline \mu _t^{n +1}, \underline Y _t) |^2 \geq 0.
$$
Also, by the growth conditions of $\hat \alpha$ (see Remark \ref{remark Lipschitz feedback}), from \eqref{eq a priori of everything} we can estimate
\begin{align*}
\E \bigg[ \int_0^T |\hat \alpha (t,   X_t, \underline \mu _t, \underline Y _t) & - \hat \alpha (t,  X_t, \underline \mu _t^{0}, \underline Y _t)|^2 dt \bigg] \\  & \leq C \E \bigg[ \int_0^T ( 1 + | X _t|^2 + |\underline Y _t| ^2  + \mathbb  |\underline \Gamma _t |^2 + | \overline \Gamma _t |^2 ) dt \bigg] < \infty, 
\end{align*}
where the latter integrability follows from Lemma \ref{lemma a priori estimates},  \eqref{eq BSDE integrability} and from the fact that $X$ is the solution of the SDE \eqref{eq limit SDE}. 
Therefore, by continuity of $\hat \alpha$ in the measure (see Remark \ref{remark continuity feedback in mu}), we can invoke the monotone convergence theorem in order to obtain  
\begin{equation}
    \label{eq convergence alpha 2} 
    \lim_n  \E \bigg[ \int_0^T |\hat \alpha ( t,  X _t, \underline \mu _t^{n-1}, \underline Y _t) - \alpha ( t,  X _t, \underline \mu _t, \underline Y _t) |^2  dt  \bigg] =0. 
\end{equation} 
Finally, combining \eqref{eq convergence drift SDE}, \eqref{eq convergence alpha  1} and \eqref{eq convergence alpha 2} into \eqref{eq stability SDE}, we get 
$$
\lim_n \E \bigg[ \sup_{t \in [0,T]} | \underline X _t^n -  X _t |^2 \bigg]  = 0.
$$
By uniqueness of the limit and by \eqref{eq convergence X n to X sup in L2}, we conclude that $\underline X = X$, which in turn implies that $\underline X$ is the unique strong solution to the SDE \eqref{eq limit SDE}. Moreover, by the previous limits, we obtain a stronger convergence with respect to \eqref{eq convergence X n to X sup in L2}; that is, 
\begin{equation}
    \label{eq convergence SDE with sup}
    \lim_n \E \bigg[ \sup_{t \in [0,T]} | \underline X _t^n -  \underline X _t |^2 \bigg]  = 0.
\end{equation}

\smallbreak\noindent
\emph{Step 4.}
Combining together \eqref{eq definition mu n and mu inside the proof} form Step 1, \eqref{eq limi BSDE with X} from Step 2 and the fact that $\underline X$ is the solution to \eqref{eq limit SDE} as seen in Step 3, we have that the process 
$(\underline X, \underline Y, \underline Z, \underline Z ^\circ)$ is a solution to the MKV FBSDE \eqref{eq MKV FBSDE}.  
Moreover, from \eqref{eq convergence BSDE} and \eqref{eq convergence SDE with sup} we obtain 
$$
\E \bigg[ \sup_{ t \in [0,T]} (|\underline{X}_t^n - \underline X _t|^2 + |\underline{Y}_t^n - \underline Y _t|^2) 
    + \int_0^T ( | \underline{Z}_t^n - \underline Z _t |^2+ | \underline{Z} _t^{\circ,n} - \underline Z _t^\circ |^2 )dt \bigg] =0,
$$
proving the claimed convergence. 

We finally prove that $(\underline X, \underline Y, \underline Z, \underline Z ^\circ)$ is the minimal solution to the MKV FBSDE.
Let $( X,  Y,  Z,  Z ^\circ)$ be another solution to the MKV FBSDE, set $\mu := ( \mathbb P _{\text{\tiny{$ X _t|B$}}})_{t\in [0,T]}$.  
Since $( X,  Y,  Z,  Z ^\circ)$ is a solution, as in Remark \ref{remark characterization of MFGE} we have $X = R(X) $, so that, by the definition of $\underline \Gamma$ (see \eqref{eq definition Gamma upper lower bounds}), we have $\underline X ^0 = \underline \Gamma \leq R(X) =X$. 
By monotonicity of $R$ (see Proposition \ref{proposition best reply map increasing}) we obtain $\underline X ^1 = R( \underline X ^0) \leq R(X)=X$. 
Therefore, by iterating the map $R$ we deduce that $\underline X ^n \leq X$ for any $n \in \mathbb N$, and, taking limits as in \eqref{eq point conv of X n to X}, we conclude that $\underline X \leq X$.
Thus,  $(\underline X, \underline Y, \underline Z, \underline Z ^\circ)$ is the minimal solution to the MKV FBSDE \eqref{eq MKV FBSDE}, which completes the proof of the theorem.
 
\subsection{Proof of Theorem \ref{theorem main}}
    \label{subsection proof second main result}
We first prove Claim \ref{theorem.main.existence}.
Thanks to Remark \ref{remark characterization of MFGE}, from Claim \ref{theorem.main.existence.FBSDE} in Theorem \ref{theorem main FBSDE} we deduce that the set of MFG equilibria $\mathcal M$ is nonempty. 

We now show the lattice property of $\mathcal M$.
Let $F$ be the set of solutions to the MKV FBSDE \eqref{eq MKV FBSDE}. 
By  Claim \ref{theorem.main.existence.FBSDE} in Theorem \ref{theorem main FBSDE}, the set $F$ is a complete lattice. 
This means that, given two solutions $\Sigma = (X,Y,Z,Z^\circ)$ and $\bar \Sigma = (\bar X , \bar Y, \bar Z, \bar Z ^\circ)$ of \eqref{eq MKV FBSDE}, there exist two (unique) solutions 
$$
\Sigma \land^{\text{\tiny{$F$}}} \bar \Sigma = ( X ^\land ,  Y ^\land,  Z ^\land,  Z ^{\circ, \land}) \quad \text{and} \quad \Sigma \lor^{\text{\tiny{$F$}}} \bar \Sigma =  ( X ^\lor ,  Y ^\lor,  Z ^\lor,  Z ^{\circ, \lor})
$$ 
of the MKV FBSDE \eqref{eq MKV FBSDE} which verify the conditions:
\begin{enumerate}
    \item $ X^\land \leq X, \, \bar X $; 
    \item $\tilde X \leq X^\land $ for any other solution $(\tilde X, \tilde Y, \tilde Z, \tilde Z ^\circ)$ of \eqref{eq MKV FBSDE} with $\tilde X \leq X, \, \bar X$;
    \item $ X, \, \bar X \leq  X^\lor$;
    \item $X^\lor \leq \hat X$ for any other solution  $(\hat X, \hat Y, \hat Z, \hat Z ^\circ)$ of \eqref{eq MKV FBSDE} with $ X, \, \bar X \leq \hat X$.
\end{enumerate}
Next, recall that the projection map $p:F \to \mathcal M$ is a bijection with inverse $\Gamma$ (see Remark \ref{remark characterization of MFGE}). 
Given $\mu, \bar \mu \in \mathcal M$, we can define the flows $\mu \land^{\text{\tiny{$\mathcal M$}}} \bar \mu:= p( \Gamma(\mu) \land^{\text{\tiny{$F$}}} \Gamma(\bar \mu))$ and $\mu \lor^{\text{\tiny{$\mathcal M$}}} \bar \mu:= p( \Gamma(\mu) \lor^{\text{\tiny{$F$}}} \Gamma(\bar \mu))$. 
Since  $\Gamma(\mu) \land^{\text{\tiny{$F$}}} \Gamma(\bar \mu) \leq  \Gamma(\mu) , \Gamma(\bar \mu) $, by monotonicity of the projection map $p$ (see \eqref{eq projection monotone}) we have $\mu \land^{\text{\tiny{$\mathcal M$}}} \bar \mu \leq^{\text{\tiny{\rm st}}}  \mu, \bar \mu$. 
Moreover, if $\nu \in \mathcal M$ with $\nu \leq^{\text{\tiny{\rm st}}}  \mu, \bar \mu $, then, by Proposition \ref{proposition best reply map increasing}, we have $\Gamma(\nu) \leq \Gamma(\mu) , \Gamma(\bar \mu)$, so that $\Gamma(\nu) \leq \Gamma(\mu) \land ^{\text{\tiny{$F$}}}  \Gamma(\bar \mu)$ and $\nu= p \circ \Gamma(\nu) \leq^{\text{\tiny{\rm st}}}  p(\Gamma(\mu) \land^{\text{\tiny{$F$}}}  \Gamma(\bar \mu)) = \mu \land^{\text{\tiny{$\mathcal M$}}} \bar \mu$. 
This proves that $\mu \land^{\text{\tiny{$\mathcal M$}}} \bar \mu$ is the biggest element $\nu$ of $\mathcal M$ (w.r.t.\ the order relation  $\leq^{\text{\tiny{\rm st}}} $) such that $\nu \leq^{\text{\tiny{\rm st}}}  \mu, \bar \mu$. 
Analogously, one can see that the stochastic flow $\mu \lor^{\text{\tiny{$\mathcal M$}}} \bar \mu:= p( \Gamma(\mu) \lor^{\text{\tiny{$F$}}} \Gamma(\bar \mu))$ is the smallest element $\nu$ of $\mathcal M$  such that $\mu, \bar \mu \leq^{\text{\tiny{\rm st}}}  \nu$. 
Therefore, the set $\mathcal M$ has a lattice structure compatible with the binary relation $\leq^{\text{\tiny{\rm st}}}$.
In particular, this argument shows that there exist minimal and maximal elements of $\mathcal M$, which are respectively given by
$$
( \mathbb P _{\text{\tiny{$ \underline X _t|B$}}})_{t\in [0,T]} \quad \text{and} \quad ( \mathbb P _{\text{\tiny{$ \overline X _t|B$}}})_{t\in [0,T]},
$$
where $\underline X$ and $\overline X$ are the forward components of the minimal and the maximal solutions to the MKV FBSDE \eqref{eq MKV FBSDE} (see Claim \ref{theorem.main.existence.FBSDE} in Theorem \ref{theorem main FBSDE}).

The proof of the Claim \ref{theorem.main.general.convergence} in Theorem \ref{theorem main} hinges on the proof of Claim \ref{theorem.main.general.convergence.FBSDE} in Theorem \ref{theorem main FBSDE}.
It is sufficient to prove the Claim \ref{theorem.main.general.convergence.up}, as the proof of Claim \ref{theorem.main.generalconvergence.down} follows by similar arguments. 

Recalling \eqref{eq point conv of mu n to mu}, we directly have 
$$
\underline \mu ^n \leq^{\text{\tiny{\rm st}}} \underline \mu ^{n+1},  \quad  n \in \mathbb N.
$$
Moreover, by elementary properties of the 2-Wasserstein distance, we have
\begin{equation}
    \label{eq first convergence mu n}
    \ell_{\mathcal C ^d, 2}^2 ( \underline \mu ^n , \underline \mu )  \leq  \mathbb E \bigg[  \sup_{t \in [0,T] } | \underline X _t^n  - \underline X _t |^2 \Big|  B \bigg],  \quad  n \in \mathbb N.   
\end{equation}
By \eqref{eq X n increasing}, \eqref{eq point conv of X n to X}, and \eqref{eq convergence SDE with sup}, we know that  
$$
 \lim _n  \sup_{t \in [0,T] } | \underline X _t^n  - \underline X _t |^2  = 0 ,  \quad  \mathbb P \text{-a.s.}
$$
Moreover, by \eqref{eq X n increasing} we have 
$$
\sup_{t \in [0,T] } | \underline X _t^n  -  \underline X _t |^2 \leq \sup_{t \in [0,T] } | \underline X _t^1  - \underline X _t |^2, \quad \text{ for $n\geq 1$,}
$$
and, since $\underline X ^1$ and $\underline X$ are both solutions to SDEs, we also have
$$
\E \bigg[ \sup_{t \in [0,T] } | \underline X _t^1  - \underline X _t |^2  \bigg] \leq 2 \bigg( \E \bigg[\sup_{t \in [0,T] } | \underline X _t^1 |^2 \bigg]  + \E \bigg[\sup_{t \in [0,T] } |\underline X _t |^2  \bigg] \bigg) < \infty.
$$
Therefore, by the monotone convergence theorem for conditional expectations, we can take limits in \eqref{eq first convergence mu n} in order to deduce that
$$
\lim_n \ell_{\mathcal C ^d, 2} ( \underline \mu ^n , \underline \mu )  \leq \lim _n   \bigg( \mathbb E \bigg[  \sup_{t \in [0,T] } | \underline X _t^n  - \underline X _t |^2  \Big|  B  \bigg] \bigg)^{1/2} =0,  \quad  \mathbb P \text{-a.s.}
$$
Similarly, taking limits in  expectation in \eqref{eq first convergence mu n}, by the monotone convergence theorem we conclude that 
$$
\lim_n  \mathbb E [\ell_{\mathcal C ^d, 2}^2 ( \underline \mu ^n , \underline \mu ) ]  \leq \lim _n  \mathbb E \bigg[  \sup_{t \in [0,T] } | \underline X _t^n  - \underline X _t |^2   \bigg] =0, 
$$ 
which completes the proof of the theorem. 

\subsection{Sketch of the proof of Theorem \ref{theorem main fictitious play}}
    \label{subsection proof convergence fictitious play}
In light of the analysis of Subsections \ref{subsection proof first main result} and \ref{subsection proof second main result}, we limit our self to prove the monotonicity of the sequences $(\hat X^n)_n$ and $(\hat \mu ^n)_n$, as well as their convergence to limit points $\hat X \in H^2$ and $\hat \mu \in \mathcal M _{\text{\tiny{$B$}}}^2$ such that $(\mathbb P _{\text{\tiny{$ \hat X _t|B$}}})_{t\in [0,T]} = \hat \mu$.   

We prove this monotonicity by induction.
Clearly, since $\hat X ^1 = \underline{\Gamma}$, by the definition of $\underline \Gamma$ (see \eqref{eq definition Gamma upper lower bounds}) we obtain $ \hat X^1 \leq \hat X ^2$.
Therefore, we have $\hat \mu ^1 =(\mathbb P _{\text{\tiny{$ \hat X _t^1|B$}}})_{t\in [0,T]} \leq ^{\text{\tiny{\rm st}}}  (\mathbb P _{\text{\tiny{$ \hat X _t^2|B$}}})_{t\in [0,T]}$, so to obtain
$$
  \hat \mu ^1 \leq^{\text{\tiny{\rm st}}} \frac{1}{2} \Big( (\mathbb P _{\text{\tiny{$ \hat X _t^2|B$}}})_{t\in [0,T]} + \hat \mu ^1 \Big) = \hat \mu ^2.
$$
Assume now that $\hat X ^{n-1} \leq \hat X ^n$ and $ \hat \mu ^{n-1} \leq^{\text{\tiny{\rm st}}} \hat \mu ^n $ for some $n \in \mathbb N _0$. 
By the definition of $\hat X ^{n+1}$ (see \eqref{eq fictitious play}) and  the monotonicity of the best reply map $\Gamma$ (see Lemma \ref{proposition best reply map increasing}), we have
$$
 \hat X^n = \Gamma (\hat \mu ^{n-1} )  \leq \Gamma (\hat \mu ^n ) = \hat X^{n+1}.
$$
Therefore, using again \eqref{eq fictitious play} and the monotonicity of the best reply map $\mathcal R$, we deduce that
\begin{align*}
\hat \mu ^n
& =  \frac{1}{n+1} (n \hat \mu ^{n} + \hat \mu^n )  \\
& \leq^{\text{\tiny{\rm st}}} \frac{1}{n+1} (\mathcal R (\hat \mu ^{n-1}) + (n-1) \hat \mu ^{n-1} + \hat \mu^n ) \\
& = \frac{1}{n+1} (\mathcal R (\hat \mu ^n) + (n-1) \hat \mu ^n + \hat \mu^n ) = \hat \mu ^{n+1}.
\end{align*}
Thus, the  sequences $(\hat X^n)_n$ and $(\hat \mu ^n)_n$ are increasing.

By monotonicity, one can define the limit processes
\begin{equation*}
\hat X  := \sup_n \hat X ^n= \lim_n \hat X ^n
 \quad \text{and} \quad 
 \hat \mu  := \sup_n \hat \mu ^n= \lim_n \hat \mu ^n, \quad \mathbb P \otimes \pi \text{-a.e.\ in $\Omega \times [0,T]$}. 
\end{equation*}
In particular, we have $\mathbb P _{\text{\tiny{$ \hat X _t^n|B$}}} \to \mathbb P _{\text{\tiny{$ \hat X _t|B$}}}$ weakly as $n \to \infty$, so that 
$$
\hat \mu _t^n : = \frac{1}{^n} \sum_{k=1}^n \mathbb P _{\text{\tiny{$ \hat X _t^k|B$}}} \to \mathbb P _{\text{\tiny{$ \hat X _t|B$}}}, \quad \text{weakly as $n \to \infty$}.
$$
Hence, by uniqueness of the limit, we have $( \mathbb P _{\text{\tiny{$ \hat X _t|B$}}})_{t\in [0,T]} = \hat \mu$.

The rest of the proof (i.e., the characterization of $\hat X$ and $\hat \mu$ as the minimal MFG solutions as well as the improved convergence in $\mathbb H ^2$ and in the 2-Wasserstein distances)
can now be recovered following the same lines as in the proof of Theorems \ref{theorem main FBSDE} and \ref{theorem main} (see Subsections \ref{subsection proof first main result} and \ref{subsection proof second main result}).

\section{Examples} 
    \label{section examples} 
In this section, we provide some examples meeting the assumptions made in Section \ref{section Problem formulation and main results}.

\subsection{Sufficient conditions for strong uniqueness for FBSDEs}
    \label{subsection Sufficient conditions for strong uniqueness for FBSDEs}
In the following, we provide two natural set-ups in which Assumption \ref{assumption uniqueness FBSDEs} is satisfied.
\subsubsection{Linear-convex models}
    \label{subsection linear convex models}
Let $b,\sigma, \sigma^\circ, h , g$ satisfy Assumptions \ref{assumption existence} and \ref{assumption a priori estimates}. 
Assume moreover that: 
\begin{align}
    \label{eq assumption linear convex}
    & b(t,x,\mu,a) = b_0 (t, \mu) + \bar b _1 (t) x + b_2(t) a  \\ \notag
    & \sigma^i ( t, x^i) = p^i (t) + q^i(t) x^i \\ \notag
    & \sigma^{i,\circ} ( t, x^i) = p^{i,\circ} (t) + q^{i,\circ} (t) x^i \\ \notag
    & \text{$h, g$ are convex in $(x,a)$.} \notag
\end{align}

Under the additional condition \eqref{eq assumption linear convex}, the existence of an optimal control (for any $ \mu \in \mathcal M _{\text{\tiny{$B$}}}^2$) can be shown as in Theorem 5.2 at p.\ 68 in \cite{Yong&Zhou99}, 
while uniqueness follows from the strict convexity of $h$ in $a$  (see Assumption \ref{assumption existence}).
In this case, the necessary and sufficient conditions of the stochastic maximum principle are fulfilled (see, e.g., \cite{CarmonaDelarue18}), so that any solution of the FBSDE \eqref{eq FBSDE optimal controls} provides an optimal control. 
Therefore, by uniqueness of the optimal control, the solution of FBSDE$_\mu$ \eqref{eq FBSDE optimal controls} is unique (on any stochastic basis) and Assumption \ref{assumption uniqueness FBSDEs} is satisfied.  

It is also worth underlining that, when Condition \eqref{eq assumption linear convex} holds, the continuity assumptions in the measure (see Assumption \ref{assumption existence}) are not needed for the existence of the optimal controls (see Lemma \ref{lemma existence of optimal controls}).

\subsubsection{Nondegenerate case} 
    \label{example assumption nondegenerate}
A second relevant example in which Assumption \ref{assumption uniqueness FBSDEs} is satisfied is when $A$ is compact and the effect of the noises is nondegenerate. 
For example, for $b,\sigma, \sigma^\circ, h , g$ which satisfy Assumption \ref{assumption existence}, we can enforce the additional requirements: 
\begin{enumerate} 
\item $A$ is compact;
 \item $d=d_1$ and the matrix $\sigma (t,x)$ is invertible with inverse $\sigma(t,x)^{-1}$ such that $\sigma(t,x)^{-1} \leq \bar K$ for any $(t,x) \in [0,T] \times \R^d $ and some $ \bar K >0$.  
\end{enumerate}
In this case, following the same rational as in the proof of Lemma 4.1 in \cite{nam2022coupled}, one can use a Girsanov transformation in order to prove that, for any $ \mu \in \mathcal M _{\text{\tiny{$B$}}}^2$, the FBSDE$_\mu$ \eqref{eq FBSDE optimal controls} admits a unique strong solution, so that Assumption \ref{assumption uniqueness FBSDEs} is fulfilled.
Clearly, the same conclusion can be obtained by requiring the same nondegeneracy condition on the matrix $(\sigma,\sigma^\circ) : [0,T] \times \R ^d \to \R ^{d \times d_1} \times \R ^{d \times d_2}$.

\subsection{Checking the submodularity conditions}
    \label{subsection Checking the submodularity conditions}
In the following, we illustrate how to check the submodularity conditions and we provide some examples.  

For $m \in \mathbb{N}$, consider a measurable function $\gamma : \R^d \times \R^m \to \R$. 
The function $\gamma$ is said to have \emph{decreasing differences} in $x$ and $y$ if 
$$
\gamma(\bar x , \bar  y) - \gamma (x , \bar  y) \leq \gamma(\bar x,y) - \gamma( x, y), \text{ for any $x, \bar x \in \R^d$, $\bar y, y \in \R^m$ with $x \leq \bar x$ and $y \leq \bar y$.}
$$
Observe that, if $\gamma \in C^2(\R^d \times \R^m)$, then $\gamma$ has decreasing differences in $x$ and $y$ if and only if 
\begin{equation}
    \label{eq decreasing differnces differential}
\frac{\partial^2 \gamma}{ \partial x^i \partial y^j}(x,y) \leq 0,  \quad \text{for each} \quad (x,y) \in \R^d \times \R^m, \ i=1,...,d, \ j=1,...,m. 
\end{equation}
Moreover, $\gamma$ is said to be \emph{submodular} in $x$ if 
$$
\gamma(x\land \bar x , y) + \gamma (x \lor \bar x, y) \leq \gamma(x,y) + \gamma(\bar x, y), \text{ for any $x, \bar x \in \R^d$, $y \in \R^m$.}
$$
This condition is always satisfied in the case $d=1$. 
If $d\geq 2$ and $\gamma(\cdot, y)$ is twice-differentiable in $x$, then $\gamma$ is submodular in $x$ if and only if, for each fixed $y$, for any $i=1,...,d$ the function $\gamma(\cdot, y)$ has decreasing differences in $x_i$ and $(x_j)_{j\ne i}$ (see Theorem 2.6.1 and Corollary 2.6.1 at p.\ 44 in \cite{Topkis11}).  
Hence, in the case of twice-differentiable cost functions $\gamma$, the submodularity in $x$ corresponds to having
\begin{equation}
    \label{eq submodular differential}
\frac{\partial^2 \gamma}{\partial x^i \partial x^j}(x,y)  \leq 0,  \quad \text{for each} \quad (x,y) \in \R^d \times \R^m, \ i=1,...,d  \text{ and }  j=1,...,d \text{ with } j \ne i.  
\end{equation}

For $n,m \in \mathbb N _0$ and a measurable function $\varphi=(\varphi^1,..., \varphi^m) : \R^n \to \R^m$, 
$\varphi$ is said to be nondecreasing (resp.\ nonincreasing) if $\varphi(x) \leq \varphi (\bar x)$ for every $ x, \bar x \in \R^n$ with $x \leq \bar x$ (resp.\ $\bar x \leq x$). 
Define the set
\begin{equation}
    \label{eq set increasing functions}
    \Phi_2^{n;m} :=  \{ \text{measurable nondecreasing $\varphi : \R^n \to \R^m$ with $\phi(x) \leq C (1+|x|^2)$, $C>0$} \}, 
\end{equation} 
and, for $\varphi \in \Phi_2^{d;m}$ and $\mu \in \mathcal P _2 (\R^d)$, set
$$
\langle \varphi, \mu \rangle := ( \langle \varphi^1, \mu \rangle, ..., \langle \varphi^m, \mu \rangle) ^{\text{\tiny {$\top$}}} := \big( \begin{matrix}
   \int_{\R^d} \varphi^1(x) d \mu(x), ... , \int_{\R^d} \varphi^m(x) d \mu(x)  
\end{matrix} \big) ^{\text{\tiny {$\top$}}}.
$$
It is straightforward to observe that, for two $\R^d$-valued square integrable r.v.'s $\zeta$ and $\bar \zeta$, if $\zeta \leq \bar \zeta$ $\mathbb{P}$-a.s., then $\langle \varphi, \mathbb P _\zeta \rangle \leq \langle \varphi, \mathbb P _{\bar \zeta} \rangle$ for any $\varphi  \in \Phi_2^{d;m}$.

All the considerations above easily allows to construct functions $\phi : \R^d \times \mathcal P _2 (\R^d) \to \R$ which are submodular in $x$ and have decreasing differences in $x$ and $\mu$.
\begin{example}[Mean-field interaction of scalar type]
    \label{example mean field interaction of scalr type}
    Consider a mean-field interaction of scalar type; that is, $\phi(x,\mu) =  \gamma(x, \langle \varphi, \mu \rangle )$ for given measurable maps $\gamma:\mathbb{R}^d \times \R^m \rightarrow \mathbb{R}$ and  $\varphi:\mathbb{R}^d \rightarrow\mathbb{R}^m$, $m \in \mathbb N _0$.
    If   $\varphi \in \Phi_2^{d;m} $  and the function $\gamma$ has decreasing differences in $x$  and $y$ and it is submodular in $x$, then the function $\phi$ has decreasing differences in $x$  and $\mu$ and it is submodular in $x$.
\end{example}
\begin{example}[Mean-field interactions of order-1]
    \label{example mean field interactions of order-1}
        Another example is provided by the interactions of order-1, i.e.\ when $\phi$ is of the form
        $
        \phi(x,\mu)=\int_{\R^d} \gamma(x,y) d\mu(y). 
        $
        It is easy to check that, if the function $\gamma$ has decreasing differences in $x$  and $y$ and it is submodular in $x$, then the function $\phi$ has decreasing differences in $x$  and $\mu$ and it is submodular in $x$. 
\end{example} 

For the sake of illustration, we present two explicit examples which verify either Condition \ref{assumption submodularity multidimensional for J} in Assumption \ref{assumption for J} or Condition \ref{assumption submodularity multidimensional with comparison}
in Assumption \ref{assumption for comparison}. 
\begin{example}
For a  function $\varphi \in \Phi_2^{d;m}$, $m \in \mathbb N _0$,
Condition \ref{assumption submodularity multidimensional for J} in Assumption \ref{assumption for J} holds in the following two cases:
    \begin{enumerate}
        \item (Convex costs) $d=m$ and $\phi(x,\mu) =  \gamma (x - \langle \varphi , \mu \rangle)$ or $\phi(x,\mu)=\int_{\R^d} \gamma(x - \varphi(y)) d\mu(y)$, for a convex function $\gamma \in C^2(\R^d )$, when either $d=1$ or $\gamma$ verifies the condition $\partial \gamma /\partial x^i \partial x^j =0$ for $i\ne j$ (this holds, e.g., for $\gamma ( x ) = | x |^2 $); 
        \item (Multiplicative costs) $\phi(x,\mu) =  \gamma_1 ( x ) \gamma_2 (\langle \varphi , \mu \rangle )$, for a  nonincreasing (resp.\ nondecreasing) function $\gamma_1 \in C^2(\R^d)$ and a nondecreasing (resp.\ nonincreasing) function $ \gamma_2 \in C^1(\R^m)$, such that either $d=1$ or $\gamma_2 \geq 0$ and $\gamma_1$ verifies \eqref{eq submodular differential}. 
    \end{enumerate}
\end{example}
\begin{example}
Consider, for $t \in [0,T]$ and $m \in \mathbb N _0$,  functions $ \varphi_t \in \Phi_2^{d;m}$ and $ \psi_t \in \Phi_2^{d;k}$, as well as functions $\gamma_1 \in C^1([0,T] \times \R^d)$ and $\gamma_2 \in C([0,T] \times \R^m)$. 
Take costs $h$ and $g$ of the form
\begin{align*}  
        & h(t,x,\mu,a) = \gamma_1 (t,x) \gamma_2 (t, \langle \varphi_t, \mu \rangle ) +l (a,\mu),\\
        & g(x,\mu) =\gamma_1 (T,x)  \gamma_2 (T,\langle \varphi_T, \mu \rangle) .
\end{align*}
Assume that the function $D_x \gamma_1 (t, x ) \gamma_2 (t, y) $ is nonincreasing in $(x,y)$ for any $t \in[0,T]$.
Then, Condition \ref{assumption submodularity multidimensional with comparison}
in Assumption \ref{assumption for comparison} is satisfied in the following two cases:
\begin{enumerate} 
        \item (Convex costs) $l(a,\mu) =  \gamma_3 ( a - \langle \psi _t, \mu \rangle )$ or $l(a,\mu) =\int_{\R^d} \gamma_3(a - \psi_t(y)) d\mu(y)$, for a convex function $\gamma_3 \in C^2(\R^k)$, when either $k=1$ or $\gamma_3$ verifies  $\partial \gamma_3 / \partial a ^i \partial a^j$ for $i\ne j$ (this holds, e.g., for $\gamma_3 ( a ) = | a  |^2$);   
        \item (Multiplicative costs) $l(a,\mu) =  \gamma_3 ( a ) \gamma_4 (\langle \psi _t, \mu \rangle )$, for a nonincreasing (resp.\ nondecreasing) function $\gamma_3 \in C^2(\R^k)$ and a nondecreasing (resp.\ nonincreasing) function $ \gamma_4 : \R^m \to \R$, such that $k=1$ or $\gamma_4 \geq 0$ and $\gamma_3$ verifies \eqref{eq submodular differential} in $a$. 
    \end{enumerate}
\end{example}

\subsection{Linear-quadratic submodular MFGs}
Building on the discussion of Subsections \ref{subsection Sufficient conditions for strong uniqueness for FBSDEs} and \ref{subsection Checking the submodularity conditions}, we now provide some more explicit examples of linear-quadratic MFGs that can be treated with the approach presented in Section \ref{section Problem formulation and main results}.
 
Take $b,\sigma,\sigma^\circ $ as in \eqref{eq assumption linear convex}. 
Consider, for $t \in [0,T]$ and $m \in \mathbb N _0$, functions $ \varphi_t \in \Phi_2^{d;d}$ and $ \psi_t  \in \Phi_2^{d;k}$, symmetric nonnegative semi-definite matrices
$P_t$  and $R_t$ and a matrix $Q_t $. 
\begin{example} 
     If $b_0$ does not depend on $\mu$, take $k=d$ and
    \begin{align*} 
        & h(t,x,\mu,a) = x P_t x + (x- \langle \varphi_t, \mu \rangle ) Q_t (x- \langle \varphi _t, \mu \rangle ) + a R_t a, \\
        & g(x,\mu) = x  P_T x + (x- \langle \varphi _T, \mu \rangle )  Q _T (x- \langle \varphi_T, \mu \rangle ).
    \end{align*} 
    All the assumptions of Theorems \ref{theorem main FBSDE} and Theorem \ref{theorem main} (in particular, Assumption \ref{assumption for J}) are fulfilled if
    \begin{enumerate}
        \item $(P_t)_j^i  + (Q_t)_j^i \leq 0,$ for $i, j=1,...,d$ with $ i \ne j$;   
        \item  $Q_t$ is symmetric nonnegative semi-definite and $ (Q_t)_j^i  \geq 0,$ for $i, j=1,...,d$;
        \item  either $| \varphi_t (x)| \leq K$ for any $(t,x) \in [0,T] \times \R^d$ or $A$ is compact. 
    \end{enumerate}    
    \end{example}
    
    \begin{example} 
    If $b_0$ depends on $\mu$,  assume that $b_0, \bar b _1$ and $b_2$ satisfy Condition \ref{assumption monotonicity multidimensional with comparison} in Assumption \ref{assumption for comparison} and take
    \begin{align*} 
        & h(t,x,\mu,a) = \langle \varphi_t, \mu \rangle Q_t x + a R_t a + (a  - \langle \psi_t, \mu \rangle ) P_t (a- \langle \psi_t, \mu \rangle )  \\
        & g(x,\mu) = \langle \varphi_T, \mu \rangle Q_T x.  
    \end{align*} 
    All the assumptions of Theorems \ref{theorem main FBSDE} and Theorem \ref{theorem main} (in particular, Assumption \ref{assumption for comparison}) are fulfilled if 
    \begin{enumerate}
        \item $(R_t)_j^i  + (P_t)_j^i \leq 0,$ for $i, j=1,...,d$ with $ i \ne j$;   
        \item  $ (P_t)_j^i  \geq 0,$ for $i, j=1,...,d$;
        \item $(Q_t)_j^i \leq 0$, for $i, j=1,...,d$;
        \item  Either $| \varphi_t (x)| + | \psi_t (x)| \leq K$ for any $(t,x) \in [0,T] \times \R^d$ or $A$ is compact. 
    \end{enumerate} 
    In this case, we point out that the conditions $\kappa (|a|^2 -1) \leq h(t,x,\mu,a) $ and $-\kappa \leq g (x,\mu) $
    in Assumption \ref{assumption existence} are not satisfied, but Lemma \ref{lemma existence of optimal controls} can be recovered via the sufficient conditions of the stochastic maximum principle (see Subsection \ref{subsection linear convex models}).
\end{example} 
In both the previous examples, the additional continuity requirement of Assumption \ref{assumption continuity in the measure} is satisfied, e.g., if
$\varphi_t $ and $\psi_t$ are bounded and continuous.

\appendix

\section{Proof of Lemma \ref{lemma existence of optimal controls}}\label{appendix}

Recall that the process $\mu$ is fixed. The proof is divided in three steps.
\smallbreak\noindent
\emph{Step 1.}
We introduce a weak formulation of the control problem and we use the results of \cite{HaussmannLepeltier90} to have a first existence result for optimal controls. 

A tuple $\rho=(\Omega^\rho, \mathcal{F}^\rho, \mathbb{F}^\rho,  \mathbb{P}^\rho, \xi^\rho, W^\rho, B^\rho, \mu^\rho, \alpha^\rho)$ is said to be an admissible weak control if 
\begin{enumerate}
\item $(\Omega^\rho, \mathcal{F}^\rho, \mathbb{F}^\rho,  \mathbb{P}^\rho)$, $\mathbb{F}^\rho = (\mathcal{F} ^\rho_t)_{t \in [0,T]}$, is a filtered probability space satisfying the usual conditions;
\item $\xi^\rho: \Omega^\rho \to \mathbb R ^d$ is an $\mathcal{F}_0^\rho$-measurable square integrable random variable;
\item $( W^\rho, B^\rho):\Omega^\rho \times [0,T] \to \R^{d_1} \times \R^{d_2}$ is an $(\Omega^\rho, \mathcal{F}^\rho, \mathbb{F}^\rho,  \mathbb{P}^\rho)$-Brownian motion; 
\item $\mu^\rho :\Omega^\rho \times [0,T] \to \mathcal P _2 (\R^d)$ is an $\mathbb F ^\rho$-progressively measurable process;
\item  $ \mathbb{P}^\rho \circ (\xi^\rho,  W^\rho, B^\rho, \mu^\rho)^{-1} = { \mathbb{P}} \circ (\xi,  W,  B, {\mu})^{-1}$;
\item $\alpha^\rho: \Omega^\rho \times [0,T] \to A$ is square integrable $\mathbb F ^\rho$-progressively measurable process.
\end{enumerate}
Let $\mathcal A ^w$ denotes the set of admissible weak controls. 
Given $\rho \in \mathcal A ^w$, we define the cost functional 
\begin{equation*} 
 J^w(\rho):= \mathbb{E}^\rho \bigg[ \int_0^T  h(t,{X}_t^{\rho}, \mu_t^\rho, \alpha_t ^\rho) dt + g({X}_T^{\rho}, \mu_T^\rho) \bigg], 
\end{equation*}
where $\mathbb E ^\rho$ denotes the expectation under the probability measure $\mathbb P ^\rho$ and the process ${X}^{\rho}=({X}^{1,\rho},...,{X}^{d,\rho})$ denotes the solution to the controlled SDE
$$ 
d{X}_t^{\rho}= ( b_1(t,{X}_t^{\rho}, \mu_t^\rho) + b_2(t) \alpha_t^\rho )dt + \sigma(t,{X}_t^{\rho}) dW_t^\rho + \sigma^{\circ} (t,{X}_t^{\rho}) dB_t^\rho,  \quad t \in [0,T], \quad {X}_0^{\rho}=\xi^\rho.
$$
By Assumption \ref{assumption existence}, such a solution $X^\rho$ exists unique on the stochastic basis $(\Omega^\rho, \mathcal{F}^\rho, \mathbb{F}^\rho,  \mathbb{P}^\rho)$ and it is $\mathbb{F}^\rho$-adapted.
A weak control $\bar \rho \in \mathcal A ^w $ is said to be optimal if $J^w(\bar \rho ) \leq J^w (\rho)$ for any $\rho \in \mathcal A ^w$.

In light of Assumption \ref{assumption existence}, Corollary 4.8 in \cite{HaussmannLepeltier90} (slightly adapted to our setting, in order to deal with the extra stochastic term $\mu$) ensures the existence of a weak control 
\begin{equation}
    \label{eq weak optimal control}
    \bar \rho =(\bar \Omega,\bar{ \mathcal{F}}, \bar{ \mathbb{F}}, \bar{ \mathbb{P}}, \bar \xi, \bar W, \bar B, \bar \mu,\bar \alpha) \in \mathcal A ^w,
\end{equation}
which minimizes the cost functional $J^w$ over $\mathcal A ^w$. 
Clearly, for any $\alpha \in \mathcal A $ one have that $\rho^\alpha =({\Omega}, {\mathcal{F}}, {\mathbb{F}}, { \mathbb{P}}, \xi, W, B, {\mu}, \alpha ) \in \mathcal A ^w$ and that $J(\alpha, \mu) = J^w(\rho^\alpha)$, so that 
\begin{equation}
 \label{eq weak controls are better}
    J^w (\bar \rho)= \inf _{\rho \in \mathcal A ^w} J^w (\rho) \leq \inf _{\alpha \in \mathcal A} J(\alpha, \mu).
\end{equation}

\smallbreak\noindent
\emph{Step 2.} We now characterize the control $\bar{\alpha}$ using the necessary conditions of the stochastic maximum principle.   
In order to do so, we underline that we will work on the stochastic basis  $(\bar{\Omega}, \bar{\mathcal{F}}, \bar{\mathbb{F}}, \bar{ \mathbb{P}})$ with noises $(\bar \xi,\bar W,\bar B, \bar{\mu})$. 
In particular, the filtration $\bar{\mathbb{F}}$ may be larger than the filtration generated by the noises $(\bar \xi,\bar W,\bar B, \bar{\mu})$. 

To simplify the notation, set $\bar X := {X}^{\bar{\rho}}$. 
Define next the adjoint processes $(\bar{Y}, \bar Z, \bar Z ^\circ, \bar M)$ as the solution of the BSDE
$$
d\bar Y _t = D_x H(t,\bar X _t, \bar \mu _t, \bar Y _t , \bar{\alpha}_t) dt + \bar Z _t d \bar W _t + \bar Z_t^\circ d \bar B _t + d \bar M_t, \quad \bar Y _T = D_x g (\bar X _T, \bar \mu _T).
$$
Since the control $\bar \rho$ is optimal for $J^w$ in the class of weak controls $\mathcal A ^w$, clearly it is optimal also in the smaller class 
$
\mathcal A _{\bar \rho}
$ of controls 
$\rho=(\Omega^\rho, \mathcal{F}^\rho, \mathbb{F}^\rho,  \mathbb{P}^\rho, \xi^\rho, W^\rho, B^\rho, \mu^\rho, \alpha^\rho) \in \mathcal A ^w$ 
 such that
 $(\Omega^\rho, \mathcal{F}^\rho, \mathbb{F}^\rho,  \mathbb{P}^\rho, \xi^\rho, W^\rho, B^\rho, \mu^\rho) =  (\bar \Omega,\bar{ \mathcal{F}}, \bar{ \mathbb{F}}, \bar{ \mathbb{P}}, \bar \xi, \bar W, \bar B, \bar \mu) $.
In other words, the control $\bar \alpha $ is optimal on the stochastic basis $(\bar \Omega,\bar{ \mathcal{F}}, \bar{ \mathbb{F}}, \bar{ \mathbb{P}}, \bar \xi, \bar W, \bar B, \bar \mu)$ determined by $\bar \rho$.
Therefore, by the stochastic maximum principle (see Theorem 1.59 at p.\ 98 in the Vol. II of \cite{CarmonaDelarue18}) we have that
$$
\bar{\alpha}_t \in \argmin _{a \in A} H(t,\bar X _t, \bar \mu _t, \bar Y _t , a), \quad \bar{\mathbb P} \otimes dt\text{-a.e.},
$$
so that, by Remark \ref{remark Lipschitz feedback}, we know that
$$ 
\bar{\alpha}_t = \hat \alpha (t,\bar X _t, \bar \mu _t, \bar Y _t), \quad \bar{\mathbb P} \otimes dt\text{-a.e.}
$$
This implies that, for coefficients $\hat b, \hat h, \hat g$ as in \eqref{eq definition data FBSDE}, the process $(\bar X, \bar Y,  \bar Z, \bar Z ^\circ, \bar M)$ is a solution of the FBSDE
\begin{align}\label{eq FBSDE on the new space with M}
   & d \bar X _t =  \hat b (t,\bar X _t,\bar \mu _t, \bar Y _t)dt + \sigma (t, \bar X _t)d \bar W _t + \sigma^\circ (t,\bar X _t)d \bar B _t, \quad  \bar X _0=\bar \xi, \\ \notag
    & d\bar Y _t = - \hat h (t,\bar X _t,\bar \mu _t, \bar Y _t) dt + \bar Z _t d \bar W _t + \bar Z _t^\circ d \bar B _t + d \bar M _t, \quad \bar Y _T = \hat g (\bar X _T, \bar \mu _T). \notag 
\end{align}
The system \eqref{eq FBSDE on the new space with M} is a FBSDE (in a random environment) with coefficients $(\hat b, \sigma, \sigma^\circ, \hat h, \hat g)$ and noises $(\bar \xi ,\bar W,\bar B, \bar{\mu})$ on the stochastic basis  $(\bar{\Omega}, \bar{\mathcal{F}}, \bar{\mathbb{F}}, \bar{ \mathbb{P}})$ (we refer to Chapter 1 in the Vol.\ II of \cite{CarmonaDelarue18} for further details on FBSDEs in a random environment). 

\smallbreak\noindent
\emph{Step 3.} We finally identify the optimal control $\alpha^\mu$ by reconstructing a copy of the solution of the FBSDE \eqref{eq FBSDE on the new space with M} on the original probability space. 

From the definition of weak control, we have  $\bar{\mathbb P} \circ (\bar \xi,\bar W ,\bar B , \bar{\mu})^{-1} =\mathbb P \circ (\xi, W, B, \mu)^{-1}$, so that the process $\bar{\mu}$ is adapted to the filtration generated by $\bar B$.  
This allows to use Remark 1.16 at p.\ 15 in Vol.\ II in \cite{CarmonaDelarue18} to deduce that the martingale term $\bar M$ is null, so that the process $(\bar X, \bar Y,  \bar Z, \bar Z ^\circ)$ is a solution to the FBSDE 
\begin{align}\label{eq FBSDE on the new space no M} 
   & d \bar X _t = \hat b (t,\bar X _t,\bar \mu _t, \bar Y _t)dt + \sigma (t, \bar X _t)d \bar W _t + \sigma^\circ (t,\bar X _t)d \bar B _t, \quad  \bar X _0=\bar \xi, \\ \notag
    & d\bar Y _t = -\hat h (t,\bar X _t,\bar \mu _t,\bar Y _t) dt + \bar Z _t d \bar W _t + \bar Z _t^\circ d \bar B _t, \quad \bar Y _T = \hat g (\bar X _T, \bar \mu _T). \notag 
\end{align}
Moreover, by Assumption \ref{assumption uniqueness FBSDEs}, such a system satisfies the strong uniqueness property. 
Therefore, by Theorem 1.33 at p.\ 34 in Vol. II of \cite{CarmonaDelarue18}, there exists a solution $(X,Y,Z,Z^\circ)$ of the FBSDE with coefficients  $(\hat b, \sigma, \sigma^\circ, \hat h, \hat g)$, and with noises $(\xi, W, B, {\mu})$ on the stochastic basis  $({\Omega}, {\mathcal{F}}, {\mathbb{F}}, { \mathbb{P}})$.
Moreover, such a solution is such that 
$$
\mathbb P \circ (X,Y,Z,Z^\circ, \xi, W, B, {\mu} )^{-1} = \bar{\mathbb P} \circ (\bar X, \bar Y,  \bar Z, \bar Z ^\circ, \bar \xi,\bar W ,\bar B , \bar{\mu})^{-1}.
$$
The latter, allows to conclude that 
$$
{\mathbb P} \circ ( X,  {\mu}, (\hat \alpha (t, X _t,  \mu _t,  Y _t))_{t \in [0,T]} )^{-1} = \bar{\mathbb P} \circ (\bar X,  \bar{\mu}, (\hat \alpha (t,\bar X _t, \bar \mu _t, \bar Y _t))_{t \in [0,T]} )^{-1}, 
$$
so that, setting $\alpha^\mu:= (\hat \alpha (t, X _t,  \mu _t,  Y _t))_{t \in [0,T]}$, one has $J(\alpha^\mu, \mu) =  J^w (\bar{\rho})$ and, by \eqref{eq weak controls are better}, we conclude that the control $\alpha ^\mu$ is optimal for the functional $J(\cdot, \mu)$ on $\mathcal A$. 

Finally, if $\tilde \alpha \in \mathcal A$ is another optimal control for $J(\cdot,\mu)$, we can use  the same arguments as in the beginning of this step in order to show that 
$\tilde \alpha  =( \hat \alpha (t,\tilde X _t,  \mu _t, \tilde Y _t))_{t \in [0,T]} $ with 
$(\tilde X, \tilde Y,  \tilde Z, \tilde Z ^\circ)$ is a solution of the FBSDE$_\mu$ \eqref{eq FBSDE optimal controls}.
Again, by strong uniqueness (Assumption \ref{assumption uniqueness FBSDEs}), we have $(X,Y,Z,Z^\circ)= (\tilde X, \tilde Y,  \tilde Z, \tilde Z ^\circ)$ and 
$\tilde \alpha  =( \hat \alpha (t,\tilde X _t,  \mu _t, \tilde Y _t))_{t \in [0,T]} = (\hat \alpha (t, X _t,  \mu _t,  Y _t))_{t \in [0,T]} = \alpha^\mu $, proving the uniqueness of the optimal control.


\smallskip
\textbf{Acknowledgements.} 
The author is grateful to Fran{\c{c}}ois Delarue, Giorgio Ferrari, Markus Fischer, Max Nendel and Jianfeng Zhang for fruitful conversations.
Funded by the Deutsche Forschungsgemeinschaft (DFG, German Research Foundation) - Project-ID 317210226 - SFB 1283 

\bibliographystyle{siam} 
\bibliography{main.bib}

\end{document}